\documentclass[11pt, a4paper,leqno]{amsart}
\usepackage{amsmath,amsthm,amscd,amssymb,amsfonts, amsbsy}
\usepackage{latexsym}
\usepackage{txfonts}
\usepackage{exscale}

\day=06 \month=08 \year=2014

\usepackage[colorlinks,citecolor=red,pagebackref,hypertexnames=false]{hyperref}
\usepackage{color}




\parskip=3pt



\calclayout
\allowdisplaybreaks


\theoremstyle{plain}
\newtheorem{theorem}[equation]{Theorem}
\newtheorem{lemma}[equation]{Lemma}

\newtheorem{proposition}[equation]{Proposition}

\theoremstyle{definition}
\newtheorem{definition}[equation]{Definition}

\theoremstyle{remark}
\newtheorem{remark}[equation]{Remark}

\newtheorem{claim}[equation]{Claim}

\numberwithin{equation}{section}

\newcommand{\RR}{{\mathbb{R}}}
\newcommand{\eps}{\varepsilon}

\newcommand{\dint}{\int\!\!\!\int}

\newcommand{\dist}{\operatorname{dist}}

\newcommand{\td}{\Delta_{\star}}
\newcommand{\hx}{\widehat{x}_Q}

\newcommand{\re}{\mathbb{R}}
\newcommand{\rn}{\mathbb{R}^n}

\newcommand{\reu}{\mathbb{R}^{n+1}_+}
\newcommand{\ree}{\mathbb{R}^{n+1}}

\newcommand{\dd}{\mathbb{D}}

\newcommand{\C}{\mathcal{C}}
\newcommand{\osp}{\Omega^+_{\sbf}}
\newcommand{\om}{\Omega}
\newcommand{\ot}{\Omega_0}
\newcommand{\ott}{\Omega_1}
\newcommand{\ospp}{\Omega^+_{\sbf'}}
\newcommand{\posp}{\partial\Omega^+_{\sbf}}
\newcommand{\pospp}{\partial\Omega^+_{\sbf'}}

\newcommand{\F}{\mathcal{F}}
\newcommand{\Qt}{\widetilde{Q}}

\newcommand{\M}{\mathcal{M}}

\newcommand{\nn}{\mathcal{N}}
\newcommand{\MS}{\mathcal{M}_{\sbf}}
\newcommand{\W}{\mathcal{W}}
\newcommand{\B}{\mathcal{B}}
\newcommand{\Bt}{\mathcal{B}^*}

\newcommand{\R}{\mathcal{R}}

\newcommand{\mfs}{\mathfrak{S}}
\newcommand{\sbf}{{\bf S}}

\newcommand{\G}{\mathcal{G}}

\newcommand{\GS}{\Gamma_{\sbf}}

\newcommand{\bt}{\widetilde{B}}

\newcommand{\mut}{\mathfrak{m}}
\newcommand{\mutt}{\widetilde{\mathfrak{m}}}

\newcommand{\pom}{\partial\Omega}

\newcommand{\hm}{\omega}
\newcommand{\vp}{\varphi}

\renewcommand{\emptyset}{\mbox{\textup{\O}}}

\DeclareMathOperator{\supp}{supp}

\DeclareMathOperator{\diam}{diam}
\DeclareMathOperator{\osc}{osc}
\DeclareMathOperator{\interior}{int}

\def\div{\mathop{\operatorname{div}}\nolimits}

\begin{document}
\allowdisplaybreaks

\title[Uniform Rectifiability, Carleson measure estimates, and approximation]{Uniform Rectifiability,
Carleson measure estimates, and approximation of harmonic functions}

\author{Steve Hofmann}

\address{Steve Hofmann
\\
Department of Mathematics
\\
University of Missouri
\\
Columbia, MO 65211, USA} \email{hofmanns@missouri.edu}

\author{Jos\'e Mar{\'\i}a Martell}

\address{Jos\'e Mar{\'\i}a Martell\\
Instituto de Ciencias Matem\'aticas CSIC-UAM-UC3M-UCM\\
Consejo Superior de Investigaciones Cient{\'\i}ficas\\
C/ Nicol\'as Cabrera, 13-15\\
E-28049 Madrid, Spain} \email{chema.martell@icmat.es}

\author{Svitlana Mayboroda}

\address{Svitlana Mayboroda
\\
Department of Mathematics
\\
University of Minnesota
\\
Minneapolis, MN 55455, USA} \email{svitlana@math.umn.edu}

\thanks{The first author was supported by NSF grant DMS-1361701. The second author was supported in part by MINECO Grant
MTM2010-16518, ICMAT Severo Ochoa project SEV-2011-0087. He also acknowledges that
the research leading to these results has received funding from the European Research
Council under the European Union's Seventh Framework Programme (FP7/2007-2013)/ ERC
agreement no. 615112 HAPDEGMT. The third author was  supported by the Alfred P. Sloan Fellowship, the NSF CAREER Award DMS 1056004, the NSF INSPIRE Award DMS 1344235, and the NSF Materials Research Science and Engineering Center Seed Grant DMR 0212302.}

\date{\today}
\subjclass[2010]{28A75, 28A78, 31B05, 
42B20, 42B25, 42B37} 

\keywords{Carleson measures, 
$\eps$-approximability, 
uniform rectifiability, harmonic functions.}

\begin{abstract}
Let $E\subset \ree$, $n\ge 2$, be a uniformly rectifiable set of dimension $n$.
Then bounded harmonic functions in $\Omega:= \ree\setminus E$ satisfy
Carleson measure estimates,
and are ``$\eps$-approximable".    Our results may be viewed as generalized versions of the 
classical F. and M. Riesz theorem,
since the estimates that we prove are equivalent, in more topologically friendly settings, to quantitative mutual
absolute continuity of harmonic measure, and surface measure.
\end{abstract}

\maketitle

\tableofcontents

\section{Introduction}
In this paper, we establish generalized versions of a classical theorem of F. and M. Riesz \cite{Rfm},
who showed that for a simply connected domain $\Omega$ in the complex plane,
with a rectifiable boundary, harmonic measure is absolutely continuous with respect to arclength measure.
Our results are scale-invariant, higher dimensional versions of the result of \cite{Rfm}, whose main novelty
lies in the fact that we completely dispense with any hypothesis of connectivity. Despite recent successes of harmonic analysis on general uniformly rectifiable sets, for a long time connectivity seemed to be a vital hypothesis from the PDE point of view. Indeed, 
Bishop and Jones \cite{BiJo} have produced a counter-example to show
that the F. and M. Riesz Theorem
does not hold, in a literal way, in the absence of connectivity:  they construct a one dimensional
(uniformly) rectifiable set
$E$ in the complex plane, for which harmonic measure with respect to $\Omega= \mathbb{C}\setminus E$,
is singular with respect to Hausdorff $H^1$ measure on $E$.  The main result of this paper shows that, in spite of Bishop-Jones counterexample, suitable substitute estimates on harmonic functions remain valid in the absence of connectivity, in general uniformly rectifiable domains. 
In more topologically benign environments,  the latter are indeed  equivalent to
(scale-invariant) mutual absolute continuity of harmonic measure $\hm$ and surface measure $\sigma$ on $\pom$.

Let us be more precise.  In the setting of a Lipschitz domain $\Omega\subset \ree,\, n\geq 1$,
for any divergence form elliptic operator $L=-\div A\nabla$ with bounded measurable
coefficients,
the following are equivalent:
\begin{list}{$(\theenumi)$}{\usecounter{enumi}\leftmargin=.8cm
\labelwidth=.8cm\itemsep=0.2cm\topsep=.1cm
\renewcommand{\theenumi}{\roman{enumi}}}

\item Every bounded solution $u$, of the equation $Lu=0$ in $\Omega$, satisfies the
{\it Carleson measure estimate} \eqref{eq1.carl*} below.

\item Every bounded solution $u$, of the equation $Lu=0$ in $\Omega$, is {\it $\eps$-approximable}, for every $\eps>0$
(see Definition \ref{def1.3}).

\item  $\hm\in A_\infty(\sigma)$ on $\pom$ (see Definition \ref{defAinfty}).

\item  Uniform {\it Square function/Non-tangential maximal function} (``$S/N$")
estimates 
hold locally in  ``sawtooth" subdomains of $\Omega$.

\end{list}

Item $(iii)$ says, of course, that harmonic measure and surface measure are mutually absolutely continuous,
in a quantitative, scale-invariant way.  
We will not give a precise definition of the terms in item $(iv)$,   since these estimates are not the primary concern
of the present paper  (but see, e.g., \cite{DJK}, as well as our forthcoming companion paper to this one).  
On the other hand,
the Carleson measure estimate \eqref{eq1.carl*} is a special case
(which in fact implies the other cases) of one direction of the $S/N$ estimates (the direction ``$S<N$",
in which one controls the square function, in some $L^p$ norm, in terms of the
non-tangential maximal function).
We shall discuss the connections among these four properties
in more detail below.

In the present work, we show that if  $\om:= \ree\setminus E$, where
$E\subset \ree$ is a
uniformly rectifiable set (see Definition \ref{defur}) of co-dimension 1, then
$(i)$ and $(ii)$ continue to hold (see Theorems \ref{t1} and \ref{t2} below), 
even though $(iii)$ may now fail, by the example of \cite{BiJo}
mentioned above.   Moreover, we develop a general technique that yields transference
from NTA sub-domains to the complement of a uniformly rectifiable set and ultimately will allow us to attack a wide range of PDE questions on uniformly rectifiable domains. 
In a forthcoming sequel to the present paper, we shall
show that in this setting, both local and global ``$S<N$" estimates hold for harmonic functions and for solutions to general elliptic PDEs
(topological obstructions preclude the opposite direction).
We shall also present there a general transference principle by 
which one may transmit Carleson measure estimates and $S<N$ bounds from Lipschitz sub-domains to
NTA domains (as a companion to the transference
from NTA sub-domains to the complement of a uniformly rectifiable set achieved here).

The main results of this paper are as follows.  The terminology used in the statements of the theorems
will be defined momentarily, but for now let us note that
in particular, a UR set is closed by definition, so that $\Omega:=\ree\setminus E$ is open,
but need not be a connected domain.
For the sake of notational convenience, we set $\delta(X):=\dist(X,E)$.
As usual, $B(x,r)$ will denote the Euclidean ball of center $x$ and radius $r$ in $\ree$.

\begin{theorem}\label{t1} Let $E\subset \ree$ be a UR (uniformly rectifiable)  set of co-dimension 1.
Suppose that $u$ is harmonic and bounded in $\Omega:=\ree\setminus E$.
Then we have the Carleson measure estimate
\begin{equation}\label{eq1.carl*}
\sup_{x\in E,\, 0<r<\infty} \,\frac1{r^n}\iint_{B(x,r)} |\nabla u(Y)|^2 \delta(Y) \,dY\,
\leq \,C\, \|u\|^2_{L^\infty(\Omega)}\, ,
\end{equation}
where the constant $C$ depends only upon $n$ and the ``UR character" of $E$.
\end{theorem}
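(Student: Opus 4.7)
The plan is to reduce the Carleson estimate on the (possibly disconnected) open set $\Omega := \ree\setminus E$ to a family of local Carleson estimates on topologically benign NTA sub-domains, and then to sum the pieces via a packing condition coming from the corona decomposition of $E$. Decompose $\Omega$ into Whitney cubes $\W$ and $E$ into the dyadic Christ cubes $\dd(E)$, and to each $I \in \W$ assign a cube $Q(I) \in \dd(E)$ with $\ell(I) \sim \ell(Q(I)) \sim \dist(I, Q(I))$. Fix $x \in E$ and $r>0$, and choose $Q_0 \in \dd(E)$ with $\ell(Q_0) \sim r$ and $B(x,r)\cap E \subset Q_0$. Since only Whitney cubes with $\ell(I) \lesssim r$ sitting near $Q_0$ contribute meaningfully to the integral in \eqref{eq1.carl*}, it suffices to prove
\[
\sum_{Q \subset Q_0}\,\sum_{I:\,Q(I)=Q} \iint_I |\nabla u(Y)|^2\, \delta(Y)\, dY \,\lesssim\, r^n\, \|u\|_{L^\infty(\Omega)}^2.
\]

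The central structural tool is the bilateral corona decomposition of David--Semmes for UR sets. One may partition the cubes $Q \subset Q_0$ into disjoint stopping-time trees $\sbf \in \mfs$, each with top cube $Q(\sbf)$, so that on each tree $\sbf$ the set $E$ is \emph{bilaterally} well-approximated by a Lipschitz graph $\GS$ with Lipschitz constant depending only on the UR character, while the tops obey the Carleson packing bound
\[
\sum_{\sbf:\,Q(\sbf)\subset Q_0} \sigma(Q(\sbf)) \,\lesssim\, \sigma(Q_0) \,\sim\, r^n.
\]
For each tree $\sbf$ I would build sawtooth sub-domains $\osp$ and $\Omega^-_\sbf$ lying on the two sides of $\GS$, each defined as the union of fattened Whitney cubes $I^*$ with $Q(I) \in \sbf$ sitting on the corresponding side of $\GS$. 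The bilateral approximation of $E$ by $\GS$ throughout $\sbf$, combined with the standard properties of Whitney cubes, should make each of these into an NTA domain with Ahlfors regular boundary, with NTA/ADR constants depending only on the UR character of $E$; moreover $\delta(Y) \sim \dist(Y,\posp)$ for $Y$ in any Whitney cube used to build $\osp$, and similarly on the other side.

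On any NTA domain with Ahlfors regular boundary, bounded harmonic functions are known to satisfy a Carleson measure estimate---this is the implication $(iii)\Rightarrow(i)$ from the introduction, valid on such sub-domains because $\hm \in A_\infty(\sigma)$ there by the theorems of Dahlberg, David--Jerison, and Semmes. Applied on $\osp$ at the scale of $Q(\sbf)$, this gives
\[
\iint_{\osp} |\nabla u(Y)|^2\, \dist(Y,\posp)\, dY \,\lesssim\, \ell(Q(\sbf))^n\, \|u\|_{L^\infty(\osp)}^2,
\]
and analogously for $\Omega^-_\sbf$. Both sawtooths lie in $\Omega$ by construction, so $u$ remains harmonic there and bounded by $\|u\|_{L^\infty(\Omega)}$. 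Replacing $\dist(\cdot,\posp)$ by $\delta$ on the relevant Whitney cubes, combining the two sides, and summing over $\sbf$ with $Q(\sbf)\subset Q_0$ using the packing bound yields
\[
\sum_{\sbf:\,Q(\sbf)\subset Q_0} \ell(Q(\sbf))^n\, \|u\|_{L^\infty(\Omega)}^2 \,\lesssim\, \sigma(Q_0)\, \|u\|_{L^\infty(\Omega)}^2 \,\sim\, r^n\, \|u\|_{L^\infty(\Omega)}^2,
\]
as required.

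The hard part is the geometric construction and verification of the sawtooths $\osp$ and $\Omega^-_\sbf$. One must prove corkscrew and Harnack-chain conditions inside each sawtooth with constants depending only on the UR character, and obtain Ahlfors regularity of $\posp$ and $\partial\Omega^-_\sbf$; the most delicate point is converting the bilateral Lipschitz approximation of $E$ by $\GS$ into usable interior access, especially near the maximal cubes where the tree terminates. One must also check that every Whitney cube $I$ with $Q(I)\in\sbf$ lies in at least one of $\osp,\,\Omega^-_\sbf$ (so the dyadic sum is truly captured by the two sawtooths), and that a given $I$ appears in boundedly many sawtooths as $\sbf$ varies, so that the summation step only costs a harmless overlap factor. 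A secondary subtlety is the transition between surface measure on $\posp$, used inside the sawtooth analysis, and the natural $\sigma$ on $E$, used in the corona packing bound; this requires a comparison which is standard in the UR regime but must be tracked carefully.
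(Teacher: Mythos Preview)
Your overall strategy is sound and leads to a valid proof, but it is \emph{not} the route taken in the paper. You propose to sum the NTA Carleson estimates over the stopping-time regimes directly, using the packing of the tops $Q(\sbf)$. The paper instead runs the machinery of ``extrapolation of Carleson measures'' (Lemma~\ref{lemma:extrapol} together with Lemma~\ref{lemma:Corona}): it encodes membership in $\M\cup\B$ as a discrete Carleson measure $\mut$, discretizes $|\nabla u|^2\delta\,dY$ as $\mutt$, and proves the implication ``$\|\mut_\F\|_{\C(Q)}$ small $\Rightarrow$ $\mutt_\F(\dd_Q)\lesssim\sigma(Q)$''. The point of that implication is that smallness of $\mut_\F$ forces $\dd_{\F,Q}$ to sit inside a \emph{single} regime $\sbf$, after which the NTA estimate on $\Omega_\sbf^\pm$ gives \eqref{eqCME-NTA}. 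An induction on the level of $\mut$ then lifts this to the full Carleson bound. Both arguments rest on the same geometric inputs (Lemma~\ref{lemma2.1}, Lemma~\ref{lemma3.15}, and the David--Jerison/DJK estimates on NTA domains with ADR boundary), but your packaging is more transparent for this particular theorem, while the paper's extrapolation framework is a reusable template that the authors have deployed in several other problems.

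There are three small gaps in your outline that you should patch. First, the bilateral corona decomposition does not partition \emph{all} of $\dd_{Q_0}$ into trees: there is a residual family $\B$ of bad cubes (Lemma~\ref{lemma2.1}), and their contribution must be handled separately---this is immediate from Caccioppoli and the packing condition for $\B$, but it must be said. Second, if $Q_0\in\G$ then $Q_0$ lies in some regime $\sbf_0$ with $Q(\sbf_0)$ possibly strictly larger than $Q_0$; the cubes in $\sbf_0\cap\dd_{Q_0}$ form only a semi-coherent subregime, and you need the full strength of Lemma~\ref{lemma3.15} (which is stated for arbitrary semi-coherent $\sbf'\subset\sbf$) to get an NTA domain at the scale of $Q_0$ for that piece. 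Third, the comparability $\delta(Y)\approx\dist(Y,\posp)$ fails near $\posp$; the paper handles this by building the integration region with fattening parameter $\tau$ and the NTA domain with $2\tau$, so that every $Y$ in the former sits well inside the latter---you should make this two-parameter trick explicit, since it is what converts the NTA estimate with weight $\delta_*$ into the desired one with weight $\delta$.
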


\begin{theorem}\label{t2} Let $E\subset \ree$ be a UR set of co-dimension 1.
Suppose that $u$ is harmonic and bounded in $\Omega:=\ree\setminus E$,
with $\|u\|_{L^\infty} \leq 1$.  Then $u$ is $\eps$-approximable
for every $\eps\in (0,1)$.
\end{theorem}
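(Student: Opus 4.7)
The plan is to derive Theorem \ref{t2} from the Carleson measure estimate of Theorem \ref{t1} by means of a stopping time (corona) construction, adapted to the dyadic cube structure on $E$. The classical template is Garnett's proof of $\eps$-approximability in the upper half space from area-function/Carleson bounds; the novelty here is that the argument has to be executed on the possibly disconnected open set $\om = \ree \setminus E$, so one must replace the usual tents over boundary cubes by the Whitney-region/sawtooth machinery available on UR sets.

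First I would set up the basic discrete framework. Fix $\eps \in (0,1)$ and a dyadic ``Christ cube'' $Q_0 \subset E$. To each dyadic subcube $Q \subset Q_0$ associate a Whitney region $U_Q \subset \om$, consisting of those Whitney cubes $I$ of $\om$ of side length comparable to $\ell(Q)$ whose distance to $Q$ is at most $C\ell(Q)$ (this is the standard $U_Q$ used to build Carleson regions and sawtooth domains over UR sets). Run the following stopping time downward from $Q_0$: declare $Q \subsetneq Q_0$ to be a stopping cube if $Q$ is maximal with the property that $\osc_{U_Q^*} u > \eps/C_1$, where $U_Q^*$ is a slight fattening of $U_Q$ and $C_1$ is a large constant. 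This partitions $\{Q \subset Q_0\}$ into a coherent regime $\F(Q_0)$ (the non-stopped cubes) and a family $\{Q_j\}$ of maximal stopping cubes; iterating the construction inside each $Q_j$ produces a countable family of coherent regimes $\{\F_k\}$ whose tops tile $Q_0$ in the Carleson sense.

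The key estimate is the packing bound for stopping cubes,
\begin{equation*}
\sum_{Q_j \subset R} \ell(Q_j)^n \,\le\, C\,\eps^{-2}\,\ell(R)^n, \qquad R \subset Q_0 \text{ dyadic},
\end{equation*}
which I would obtain as follows: if $Q$ is a stopping cube, then by a Caccioppoli/Poincar\'e argument applied inside $U_Q^*$ (each component of which is a nice Euclidean region of diameter $\sim \ell(Q)$ with $\delta \sim \ell(Q)$),
\begin{equation*}
\iint_{U_Q^*} |\nabla u(Y)|^2\,\delta(Y)\,dY \,\gtrsim\, \eps^2\,\ell(Q)^n.
\end{equation*}
Since the fattened regions $\{U_Q^*\}$ have bounded overlap, summing over the disjoint maximal stopping cubes inside $R$ and applying Theorem \ref{t1} on a ball of radius $C\ell(R)$ centered at a point of $R$ yields the packing bound.

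Once the packing is in hand, I would build $\varphi = \varphi_\eps$ as a smoothed, piecewise-constant version of $u$: on the interior of the sawtooth $\Omega_{\F_k}$ associated with each coherent regime, set $\varphi$ equal to the value of $u$ at a corkscrew point for the top cube of $\F_k$ (chosen in the appropriate connected component of $\om$), and glue these constants together using a partition of unity subordinate to the Whitney decomposition. Because $u$ oscillates by at most $\eps$ on each coherent regime (by a telescoping/Harnack chain along the connected Whitney region, together with interior H\"older regularity of harmonic functions), we obtain $\|u - \varphi\|_\infty \lesssim \eps$. For the Carleson bound on $|\nabla \varphi|\,dX$, note that $\nabla\varphi$ is supported in a neighborhood of the boundaries between adjacent sawtooths, where $|\nabla \varphi| \lesssim \ell(Q_j)^{-1}$ on Whitney cubes of size $\sim \ell(Q_j)$, so the integral over a ball $B(x,r)$ is controlled by $\sum \ell(Q_j)^n$ over stopping cubes meeting $B(x,r)$, and the packing estimate delivers a Carleson norm $\lesssim \eps^{-2}$.

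The step I expect to be the main obstacle is the construction of the approximant on a \emph{disconnected} $\om$. Without connectivity one cannot simply speak of the ``value of $u$ at the top of $Q$''; the corkscrew balls in different components of $\om$ near a single cube $Q$ carry unrelated values of $u$, and in the Bishop--Jones example these components genuinely proliferate. One must therefore perform the stopping time and construct $\varphi$ componentwise, matching sawtooth regions only within a single component, and verify that the oscillation control propagates along chains of Whitney cubes within each component (rather than along a Harnack chain through $\om$). A secondary, more technical obstacle is the bookkeeping needed to upgrade the natural $L^2$ Carleson bound into the $L^1$ bound on $|\nabla \varphi|$ demanded by the definition of $\eps$-approximability, which may require an additional John--Nirenberg-type stopping time or a careful partition-of-unity choice to ensure the cutoff gradients do not accumulate.
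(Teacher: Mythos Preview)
Your proposal has a genuine gap in the step where you claim $\|u - \varphi\|_\infty \lesssim \eps$. Your stopping criterion (local oscillation $\osc_{U_Q^*} u > \eps/C_1$) ensures only that $u$ varies by at most $\eps/C_1$ across each individual Whitney region $U_Q$ in a coherent regime. To conclude that $u$ is within $\eps$ of a \emph{single constant} on the entire sawtooth $\Omega_{\F_k}$, you would need to telescope these small oscillations along a chain of overlapping $U_Q$'s from an arbitrary cube up to the top; but the length of such a chain (the number of dyadic generations) is unbounded, so the accumulated error $N \cdot \eps/C_1$ is not controlled. Interior H\"older regularity does not rescue this. In the classical half-space setting Garnett avoids the issue by stopping instead when $|u(Y_Q) - u(Y_{Q_0})| > \eps$ (comparison to the top, not local oscillation), which gives the $L^\infty$ bound immediately; but then the packing of stopping cubes is no longer a local Caccioppoli/Poincar\'e consequence of Theorem \ref{t1}, and requires an $N_*$-versus-square-function estimate on the sawtooth, which in turn demands some quantitative connectivity of that sawtooth.

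This is precisely where the paper's machinery enters. The paper first applies the bilateral corona decomposition (Lemma \ref{lemma2.1}) to organize the good cubes into regimes $\sbf$ for which the associated sawtooth $\Omega_\sbf = \Omega_\sbf^+ \cup \Omega_\sbf^-$ splits into exactly two NTA domains with ADR boundaries (Lemma \ref{lemma3.15}). Within each $\sbf$ one then runs the Garnett-style stopping time (comparing $u(Y_Q^\pm)$ to $u(Y^\pm_{Q(\sbf)})$), and the packing of the resulting ``generation cubes'' (Lemma \ref{lemma7.12}) is obtained by invoking the known $N_* \lesssim S$ estimates on the NTA pieces via \cite{DJe} and \cite{DJK}; see Lemma \ref{lemma7.8} and \eqref{eq7.11.2}. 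Your local-oscillation packing argument does appear in the paper, but only as the packing \eqref{eq7.5} for the auxiliary ``red cubes'', which are then absorbed into the bad collection $\Bt$. The componentwise fix you sketch at the end points in the right direction, but without the bilateral corona decomposition there is no mechanism guaranteeing that the components of a generic sawtooth over $E$ are NTA (or even Harnack-chain connected with uniform constants), so the $N_*/S$ bound needed for the correct packing is unavailable.
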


We conjecture that converses to Theorems \ref{t1} and \ref{t2} (or perhaps the combination of the two),
should hold.  Such results would be analogues of our work in \cite{HMU}.

Let us now define the terms used in the statements of our theorems.
The following notions have meaning in co-dimensions greater than 1, but here
we shall discuss only
the co-dimension 1 case that is of interest to us in this work. 

\begin{definition}\label{defadr} ({\bf  ADR})  (aka {\it Ahlfors-David regular}).
We say that a  set $E \subset \ree$, of Hausdorff dimension $n$, is ADR
if it is closed, and if there is some uniform constant $C$ such that
\begin{equation} \label{eq1.ADR}
\frac1C\, r^n \leq \sigma\big(\Delta(x,r)\big)
\leq C\, r^n,\quad\forall r\in(0,\diam (E)),\ x \in E,
\end{equation}
where $\diam(E)$ may be infinite.
Here, $\Delta(x,r):= E\cap B(x,r)$ is the ``surface ball" of radius $r$,
and $\sigma:= H^n|_E$ 
is the ``surface measure" on $E$, where $H^n$ denotes $n$-dimensional
Hausdorff measure.
\end{definition}

\begin{definition}\label{defur} ({\bf UR}) (aka {\it uniformly rectifiable}).
An $n$-dimensional ADR (hence closed) set $E\subset \ree$
is UR if and only if it contains ``Big Pieces of
Lipschitz Images" of $\rn$ (``BPLI").   This means that there are positive constants $\theta$ and
$M_0$, such that for each
$x\in E$ and each $r\in (0,\diam (E))$, there is a
Lipschitz mapping $\rho= \rho_{x,r}: \rn\to \ree$, with Lipschitz constant
no larger than $M_0$,
such that 
$$
H^n\Big(E\cap B(x,r)\cap  \rho\left(\{z\in\rn:|z|<r\}\right)\Big)\,\geq\,\theta\, r^n\,.
$$
\end{definition}

We recall that $n$-dimensional rectifiable sets are characterized by the
property that they can be
covered, up to a set of
$H^n$ measure 0, by a countable union of Lipschitz images of $\rn$;
we observe that BPLI  is a quantitative version
of this fact.

We remark
that, at least among the class of ADR sets, the UR sets
are precisely those for which all ``sufficiently nice" singular integrals
are $L^2$-bounded  \cite{DS1}.    In fact, for $n$-dimensional ADR sets
in $\ree$, the $L^2$ boundedness of certain special singular integral operators
(the ``Riesz Transforms"), suffices to characterize uniform rectifiability (see \cite{MMV} for the case $n=1$, and 
\cite{NToV} in general). 
We further remark that
there exist sets that are ADR (and that even form the boundary of a domain satisfying 
interior Corkscrew and Harnack Chain conditions),
but that are totally non-rectifiable (e.g., see the construction of Garnett's ``4-corners Cantor set"
in \cite[Chapter1]{DS2}).  Finally, we mention that there are numerous other characterizations of UR sets
(many of which remain valid in higher co-dimensions); cf. \cite{DS1,DS2}.

\begin{definition}\label{defurchar} ({\bf ``UR character"}).   Given a UR set $E\subset \ree$, its ``UR character"
is just the pair of constants $(\theta,M_0)$ involved in the definition of uniform rectifiability,
along with the ADR constant; or equivalently,
the quantitative bounds involved in any particular characterization of uniform rectifiability.
\end{definition}

Let $\Omega:= \ree\setminus E$, where $E\subset \ree$ is an $n$-dimensional ADR set (hence closed);
thus $\Omega$ is open, but need not be a connected domain.
\begin{definition}\label{def1.3} Let $u\in L^\infty(\Omega)$,
with $\|u\|_\infty \leq 1$, and let $\eps \in (0,1)$.   We say that $u$
is $\eps$-{\bf approximable}, if there is a constant $C_\eps$, and
a function $\vp =\vp^\eps\in W^{1,1}_{\rm loc}(\Omega)$
satisfying
\begin{equation}\label{eq1.4}\|u-\vp\|_{L^\infty(\Omega)}<\eps\,,
\end{equation}
and
\begin{equation}\label{eq1.5}
\sup_{x\in E,\, 0<r<\infty} \,\frac1{r^n}\iint_{B(x,r)}|\nabla \vp(Y)| \,dY\leq C_\eps\,.
\end{equation}
\end{definition}

We observe that \eqref{eq1.5} is an ``enhanced" version of the Carleson
estimate \eqref{eq1.carl*}.
On the other hand, even in the classical case that $\Omega$ is a half-space or a ball,
one cannot expect that
the $L^1$ Carleson measure bound \eqref{eq1.5} should hold, in general, with a bounded
harmonic function
$u$ in place of $\vp$ (there are counter-examples, see \cite[Ch. VIII]{G}).

The notion of $\eps$-approximability was introduced
by Varopoulos \cite{Va}, and (in sharper form) by Garnett \cite{G},
who were motivated in part by its connections with both
the $H^1/BMO$ duality theorem of Fefferman \cite{FS}, and the ``Corona Theorem" of Carleson
\cite{Car}.  In particular, the $\eps$-approximability property is the main ingredient in
the proof of Varopoulos's extension theorem, which states that every $f\in BMO(\rn)$ 
has an extension $F\in C^\infty(\reu)$, such that $|\nabla F(x,t)| dx dt$ is a Carleson measure.
Using ideas related to the proof of the Corona theorem, Garnett
showed that the $\eps$-approximability  property is enjoyed, for all $\eps\in (0,1)$, by
bounded harmonic functions in 
the half-space.  Garnett then uses this fact to establish a ``quantitative Fatou theorem", which provided
the first hint that $\eps$-approximability is related to quantitative properties
of harmonic measure.

As we have noted, the properties $(i)$-$(iv)$ listed above are equivalent, given suitable quantitative
connectivity of $\Omega$.  Let us recall, for example, the known results in the setting of a Lipschitz domain.
In that setting,
Dahlberg \cite{D} obtained an
extension Garnett's $\eps$-approximability result,
observing that
$(iv)$ implies $(ii)$\footnote{This implication holds more generally for null solutions of
divergence form elliptic equations, see \cite{KKPT} and \cite{HKMP}.}.
The explicit
connection of $\eps$-approximability
with the $A_\infty$ property of harmonic measure, i.e.,
that $(ii)\implies(iii)$, appears in \cite{KKPT}
(where this implication is established not only for the Laplacian,
but for general divergence form elliptic operators).
That $(iii)$ implies $(iv)$ is proved for harmonic functions in \cite{D2}\footnote{And thus all three properties
hold for harmonic functions in Lipschitz domains, by the result of \cite{D1}.}, and, for null solutions of
general divergence form elliptic operators,
in \cite{DJK}.
Finally, Kenig, Kirchheim and Toro \cite{KKT} have recently shown  
that $(i)$ implies $(iii)$ in a Lipschitz domain, whereas, on the other hand,  
$(i)$ may be seen, via good-lambda and John-Nirenberg arguments, to be equivalent to the local version of one 
direction of $(iv)$ (the ``$S<N$" direction)\footnote{The latter equivalence does not require any 
connectivity hypothesis, as we shall
show in a forthcoming sequel to the present paper.}.

The results of the present paper should also be compared to
those of the papers \cite{HMU} and \cite{AHMNT} (see also the earlier paper
\cite{HM-I}) which say, in combination,
that for a ``1-sided NTA" (aka ``uniform") domain $\Omega$
(i.e., a domain in which one has interior Corkscrew and Harnack Chain conditions,
see Definitions \ref{def1.cork}, \ref{def1.hc}), 
with ADR boundary,
then $\pom$ is UR if and only if
$\hm \in A_\infty(\sigma)$ if and only if $\Omega$ is an NTA domain (Definition \ref{def1.nta}). We refer the reader to these papers for details and further historical context.
This chain of implications underlines the strength of the UR of the boundary under the background hypothesis that the domain
is 1-sided NTA, which serves as a scale-invariant connectivity. The present paper, on the other hand, introduces a general mechanism
allowing one to dispose of the connectivity assumption and still obtain Carleson measure bounds and $\eps$-approximability. We would like to emphasize that in this paper we work with a UR set $E$, for which the open set $\ree\setminus E$ fails to satisfy the Harnack chain condition. Otherwise, we would have that $\ree\setminus E$ is a 1-sided NTA domain (the Corkscrew condition holds since $E$ is ADR), and thus NTA, by [AHMNT]. This cannot happen since $\ree\setminus E$ has null exterior.

\subsection{Further Notation and Definitions}

\begin{list}{$\bullet$}{\leftmargin=0.4cm  \itemsep=0.2cm}

\item We use the letters $c,C$ to denote harmless positive constants, not necessarily
the same at each occurrence, which depend only on dimension and the
constants appearing in the hypotheses of the theorems (which we refer to as the
``allowable parameters'').  We shall also
sometimes write $a\lesssim b$ and $a \approx b$ to mean, respectively,
that $a \leq C b$ and $0< c \leq a/b\leq C$, where the constants $c$ and $C$ are as above, unless
explicitly noted to the contrary.  At times, we shall designate by $M$ a particular constant whose value will remain unchanged throughout the proof of a given lemma or proposition, but
which may have a different value during the proof of a different lemma or proposition.

\item Given a closed set $E \subset \ree$, we shall
use lower case letters $x,y,z$, etc., to denote points on $E$, and capital letters
$X,Y,Z$, etc., to denote generic points in $\ree$ (especially those in $\ree\setminus E$).

\item The open $(n+1)$-dimensional Euclidean ball of radius $r$ will be denoted
$B(x,r)$ when the center $x$ lies on $E$, or $B(X,r)$ when the center
$X \in \ree\setminus E$.  A ``surface ball'' is denoted
$\Delta(x,r):= B(x,r) \cap\partial\Omega.$

\item Given a Euclidean ball $B$ or surface ball $\Delta$, its radius will be denoted
$r_B$ or $r_\Delta$, respectively.

\item Given a Euclidean or surface ball $B= B(X,r)$ or $\Delta = \Delta(x,r)$, its concentric
dilate by a factor of $\kappa >0$ will be denoted
$\kappa B := B(X,\kappa r)$ or $\kappa \Delta := \Delta(x,\kappa r).$

\item Given a (fixed) closed set $E \subset \ree$, for $X \in \ree$, we set $\delta(X):= \dist(X,E)$.

\item We let $H^n$ denote $n$-dimensional Hausdorff measure, and let
$\sigma := H^n\big|_{E}$ denote the ``surface measure'' on a closed set $E$
of co-dimension 1.

\item For a Borel set $A\subset \ree$, we let $1_A$ denote the usual
indicator function of $A$, i.e. $1_A(x) = 1$ if $x\in A$, and $1_A(x)= 0$ if $x\notin A$.

\item For a Borel set $A\subset \ree$,  we let $\interior(A)$ denote the interior of $A$.


\item Given a Borel measure $\mu$, and a Borel set $A$, with positive and finite $\mu$ measure, we
set $\fint_A f d\mu := \mu(A)^{-1} \int_A f d\mu$.

\item We shall use the letter $I$ (and sometimes $J$)
to denote a closed $(n+1)$-dimensional Euclidean dyadic cube with sides
parallel to the co-ordinate axes, and we let $\ell(I)$ denote the side length of $I$.
If $\ell(I) =2^{-k}$, then we set $k_I:= k$.
Given an ADR set $E\subset \ree$, we use $Q$ to denote a dyadic ``cube''
on $E$.  The
latter exist (cf. \cite{DS1}, \cite{Ch}), and enjoy certain properties
which we enumerate in Lemma \ref{lemmaCh} below.

\end{list}

\begin{definition} ({\bf Corkscrew condition}).  \label{def1.cork}
Following
\cite{JK}, we say that a domain $\Omega\subset \ree$
satisfies the ``Corkscrew condition'' if for some uniform constant $c>0$ and
for every surface ball $\Delta:=\Delta(x,r),$ with $x\in \partial\Omega$ and
$0<r<\diam(\partial\Omega)$, there is a ball
$B(X_\Delta,cr)\subset B(x,r)\cap\Omega$.  The point $X_\Delta\subset \Omega$ is called
a ``Corkscrew point'' relative to $\Delta.$  We note that  we may allow
$r<C\diam(\pom)$ for any fixed $C$, simply by adjusting the constant $c$.
\end{definition}

\begin{definition}({\bf Harnack Chain condition}).  \label{def1.hc} Again following \cite{JK}, we say
that $\Omega$ satisfies the Harnack Chain condition if there is a uniform constant $C$ such that
for every $\rho >0,\, \Lambda\geq 1$, and every pair of points
$X,X' \in \Omega$ with $\delta(X),\,\delta(X') \geq\rho$ and $|X-X'|<\Lambda\,\rho$, there is a chain of
open balls
$B_1,\dots,B_N \subset \Omega$, $N\leq C(\Lambda)$,
with $X\in B_1,\, X'\in B_N,$ $B_k\cap B_{k+1}\neq \emptyset$
and $C^{-1}\diam (B_k) \leq \dist (B_k,\partial\Omega)\leq C\diam (B_k).$  The chain of balls is called
a ``Harnack Chain''.
\end{definition}

\begin{definition}({\bf NTA}). \label{def1.nta} Again following \cite{JK}, we say that a
domain $\Omega\subset \ree$ is NTA (``Non-tangentially accessible") if it satisfies the
Harnack Chain condition, and if both $\Omega$ and
$\Omega_{\rm ext}:= \ree\setminus \overline{\Omega}$ satisfy the Corkscrew condition.
\end{definition}

\begin{definition}\label{defAinfty}
({\bf $A_\infty$}). \label{ss.ainfty}
Given an ADR set $E\subset\ree$, 
and a surface ball
$\Delta_0:= B_0 \cap E$,
we say that a Borel measure $\mu$ defined on $E$ belongs to
$A_\infty(\Delta_0)$ if there are positive constants $C$ and $\theta$
such that for each surface ball $\Delta = B\cap E$, with $B\subseteq B_0$,
we have
\begin{equation}\label{eq1.wainfty}
\mu (F) \leq C \left(\frac{\sigma(F)}{\sigma(\Delta)}\right)^\theta\,\mu (\Delta)\,,
\qquad \mbox{for every Borel set } F\subset \Delta\,.
\end{equation}
\end{definition}

\begin{lemma}\label{lemmaCh}\textup{({\bf Existence and properties of the ``dyadic grid''})
\cite{DS1,DS2}, \cite{Ch}.}
Suppose that $E\subset \ree$ is closed $n$-dimensional ADR set.  Then there exist
constants $ a_0>0,\, \gamma>0$ and $C_1<\infty$, depending only on dimension and the
ADR constant, such that for each $k \in \mathbb{Z},$
there is a collection of Borel sets (``cubes'')
$$
\mathbb{D}_k:=\{Q_{j}^k\subset E: j\in \mathfrak{I}_k\},$$ where
$\mathfrak{I}_k$ denotes some (possibly finite) index set depending on $k$, satisfying

\begin{list}{$(\theenumi)$}{\usecounter{enumi}\leftmargin=.8cm
\labelwidth=.8cm\itemsep=0.2cm\topsep=.1cm
\renewcommand{\theenumi}{\roman{enumi}}}

\item $E=\cup_{j}Q_{j}^k\,\,$ for each
$k\in{\mathbb Z}$.

\item If $m\geq k$ then either $Q_{i}^{m}\subset Q_{j}^{k}$ or
$Q_{i}^{m}\cap Q_{j}^{k}=\emptyset$.

\item For each $(j,k)$ and each $m<k$, there is a unique
$i$ such that $Q_{j}^k\subset Q_{i}^m$.

\item $\diam\big(Q_{j}^k\big)\leq C_1 2^{-k}$.

\item Each $Q_{j}^k$ contains some ``surface ball'' $\Delta \big(x^k_{j},a_02^{-k}\big):=
B\big(x^k_{j},a_02^{-k}\big)\cap E$.

\item $H^n\big(\big\{x\in Q^k_j:{\rm dist}(x,E\setminus Q^k_j)\leq \varrho \,2^{-k}\big\}\big)\leq
C_1\,\varrho^\gamma\,H^n\big(Q^k_j\big),$ for all $k,j$ and for all $\varrho\in (0,a_0)$.
\end{list}
\end{lemma}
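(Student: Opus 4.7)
The plan is to follow Christ's construction from \cite{Ch} (see also \cite{DS1,DS2}), which produces the analog of a dyadic grid on $E$ from a hierarchical system of maximal nets. I would first fix a small constant $a_0>0$ (to be determined in the course of the argument) and, for each $k\in\ZZ$, select a maximal $a_0\cdot 2^{-k}$-separated subset $\{x_j^k\}_{j\in\mathfrak{I}_k}$ of $E$. By maximality, the balls $B(x_j^k,2a_0\cdot 2^{-k})$ cover $E$, while the separation together with ADR implies that any ball of radius $\lesssim 2^{-k}$ contains only dimensionally many scale-$k$ centers. I would then assign to each center $x_j^{k+1}$ a unique \emph{parent} $x_{p(j)}^k$ at distance $\leq 2a_0\cdot 2^{-k}$, breaking ties in a fixed but arbitrary way; this organizes all the centers into an infinite tree.

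The cubes are then defined as the regions descending from each center in the tree. For $x\in E$ and each scale $m$, the covering property provides a center $x_{j(x,m)}^m$ within distance $2a_0\cdot 2^{-m}$ of $x$, and an elementary iteration shows that, for $a_0$ sufficiently small, the scale-$k$ ancestor $\alpha(x,k)$ of $j(x,m)$ stabilizes as $m\to\infty$; set
\begin{equation*}
Q_j^k:=\{x\in E:\alpha(x,k)=j\}.
\end{equation*}
Properties (i)--(iii) are then immediate from the tree structure; (iv) comes from the telescoping bound $|x-x_j^k|\leq\sum_{m\geq k}2a_0\cdot 2^{-m}\lesssim 2^{-k}$; and (v) is arranged by forcing the tie-breaking rule to place all of $B(x_j^k,a_0\cdot 2^{-k})\cap E$ into $Q_j^k$.

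The main obstacle is the small-boundary property (vi). The key observation is that a point $x\in Q_j^k$ can lie within $\varrho\cdot 2^{-k}$ of $E\setminus Q_j^k$ only if, at every intermediate scale $m$ with $k<m\leq k+\log_2(1/\varrho)$, the chosen scale-$m$ center $x_{j(x,m)}^m$ has a near-competitor in $\mathfrak{I}_m$ whose scale-$k$ ancestor differs from $j$, at distance $\lesssim a_0\cdot 2^{-m}$. Using ADR together with the uniform local cardinality of the nets, one shows that at each such intermediate scale only a fixed fraction $\theta_0\in(0,1)$ of the remaining ``live'' surface measure of $Q_j^k$ continues to be ambiguous at the next refinement; iterating over the $\sim\log_2(1/\varrho)$ intermediate scales yields the desired geometric decay
\begin{equation*}
H^n\big(\{x\in Q_j^k:\dist(x,E\setminus Q_j^k)\leq\varrho\cdot 2^{-k}\}\big)\,\lesssim\,\theta_0^{\log_2(1/\varrho)}\,H^n(Q_j^k)\,=\,\varrho^{\gamma}\,H^n(Q_j^k),
\end{equation*}
with $\gamma=\log_2(1/\theta_0)>0$. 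The truly delicate step is producing such a uniform $\theta_0<1$ at every scale: this is where the smallness of $a_0$ must be invoked, together with a combinatorial pigeonhole argument that exploits the freedom in the parent assignment near the combinatorial boundary to push a definite fraction of surface measure strictly into the interior at each refinement.
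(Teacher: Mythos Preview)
The paper does not prove this lemma: it is stated with citations to \cite{Ch} and \cite{DS1,DS2}, followed by some remarks on notation and normalization (in particular, that the dyadic parameter can be taken to be $1/2$), but no argument. So there is no ``paper's own proof'' to compare against; the relevant benchmark is Christ's construction.

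Your outline does follow Christ's scheme faithfully through the construction of the nets, the tree of parents, and the definition of the cubes; properties (i)--(v) are correctly accounted for (though (v) does not actually need tie-breaking: the separation of the net already forces $B(x_j^k,a_0 2^{-k})\cap E$ to see $x_j^k$ as its unique nearest center, so the whole surface ball lands in $Q_j^k$ automatically).

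The genuine gap is in (vi). Your ``only if'' observation is correct as far as it goes: a point within $\varrho\, 2^{-k}$ of $E\setminus Q_j^k$ does sit, at every intermediate scale $m$, in a scale-$m$ cube that has a neighboring scale-$m$ cube with a different scale-$k$ ancestor. But the step from this to a uniform contraction factor $\theta_0<1$ is exactly the content of Christ's argument, and it is not a matter of choosing $a_0$ small. In Christ's proof one does \emph{not} get $\theta_0<1$ by pigeonhole on a fixed assignment; rather, one first proves a quantitative ``interior'' estimate (each preliminary cube contains a surface ball at a definite distance from all competing cubes), and then runs a genuine stopping-time/iteration argument showing that the set of points that remain ambiguous through $N$ successive scales has measure $\lesssim (1-\eta)^N$. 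The freedom in the parent assignment is used once, at the outset, to guarantee the interior ball, not repeatedly at each refinement. As written, your sketch identifies the right target but leaves precisely this mechanism unexplained (as you concede in your final sentence), so (vi) is not yet proved.
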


A few remarks are in order concerning this lemma.

\begin{list}{$\bullet$}{\leftmargin=0.4cm  \itemsep=0.2cm}

\item In the setting of a general space of homogeneous type, this lemma has been proved by Christ
\cite{Ch}, with the
dyadic parameter $1/2$ replaced by some constant $\delta \in (0,1)$.
In fact, one may always take $\delta = 1/2$ (cf.  \cite[Proof of Proposition 2.12]{HMMM}).
In the presence of the Ahlfors-David
property (\ref{eq1.ADR}), the result already appears in \cite{DS1,DS2}.

\item  For our purposes, we may ignore those
$k\in \mathbb{Z}$ such that $2^{-k} \gtrsim {\rm diam}(E)$, in the case that the latter is finite.

\item  We shall denote by  $\mathbb{D}=\mathbb{D}(E)$ the collection of all relevant
$Q^k_j$, i.e., $$\mathbb{D} := \cup_{k} \mathbb{D}_k,$$
where, if $\diam (E)$ is finite, the union runs
over those $k$ such that $2^{-k} \lesssim  {\rm diam}(E)$.

\item Properties $(iv)$ and $(v)$ imply that for each cube $Q\in\mathbb{D}_k$,
there is a point $x_Q\in E$, a Euclidean ball $B(x_Q,r)$ and a surface ball
$\Delta(x_Q,r):= B(x_Q,r)\cap E$ such that
$r\approx 2^{-k} \approx {\rm diam}(Q)$
and \begin{equation}\label{cube-ball}
\Delta(x_Q,r)\subset Q \subset \Delta(x_Q,Cr),\end{equation}
for some uniform constant $C$.
We shall denote this ball and surface ball by
\begin{equation}\label{cube-ball2}
B_Q:= B(x_Q,r) \,,\qquad\Delta_Q:= \Delta(x_Q,r),\end{equation}
and we shall refer to the point $x_Q$ as the ``center'' of $Q$.

\item For a dyadic cube $Q\in \mathbb{D}_k$, we shall
set $\ell(Q) = 2^{-k}$, and we shall refer to this quantity as the ``length''
of $Q$.  Evidently, $\ell(Q)\approx \diam(Q).$

\item For a dyadic cube $Q \in \mathbb{D}$, we let $k(Q)$ denote the ``dyadic generation''
to which $Q$ belongs, i.e., we set  $k = k(Q)$ if
$Q\in \mathbb{D}_k$; thus, $\ell(Q) =2^{-k(Q)}$.

\end{list}

\section{A bilateral corona decomposition}


In this section, we prove a bilateral version of the ``corona decomposition" of David and Semmes
\cite{DS1,DS2}.  Before doing that let us introduce the notions of ``coherency'' and ``semi-coherency'':

\begin{definition}\cite{DS2}.\label{d3.11}   
Let $\sbf\subset \dd(E)$. We say that $\sbf$ is
``coherent" if the following conditions hold:
\begin{itemize}\itemsep=0.1cm

\item[$(a)$] $\sbf$ contains a unique maximal element $Q(\sbf)$ which contains all other elements of $\sbf$ as subsets.

\item[$(b)$] If $Q$  belongs to $\sbf$, and if $Q\subset \widetilde{Q}\subset Q(\sbf)$, then $\widetilde{Q}\in {\bf S}$.

\item[$(c)$] Given a cube $Q\in \sbf$, either all of its children belong to $\sbf$, or none of them do.

\end{itemize}
We say that $\sbf$ is ``semi-coherent'' if only conditions $(a)$ and $(b)$ hold. 
\end{definition}

\smallskip

We are now ready to state our bilateral ``corona decomposition''.
\begin{lemma}\label{lemma2.1}  Suppose that $E\subset \ree$ is $n$-dimensional UR.  Then given any positive constants
$\eta\ll 1$
and $K\gg 1$, there is a disjoint decomposition
$\dd(E) = \G\cup\B$, satisfying the following properties.
\begin{enumerate}
\item  The ``Good"collection $\G$ is further subdivided into
disjoint stopping time regimes, such that each such regime {\bf S} is coherent (cf. Definition \ref{d3.11}).

\item The ``Bad" cubes, as well as the maximal cubes $Q(\sbf)$ satisfy a Carleson
packing condition:
$$\sum_{Q'\subset Q, \,Q'\in\B} \sigma(Q')
\,\,+\,\sum_{\sbf: Q(\sbf)\subset Q}\sigma\big(Q(\sbf)\big)\,\leq\, C_{\eta,K}\, \sigma(Q)\,,
\quad \forall Q\in \dd(E)\,.$$
\item For each $\sbf$, there is a Lipschitz graph $\Gamma_{\sbf}$, with Lipschitz constant
at most $\eta$, such that, for every $Q\in \sbf$,
\begin{equation}\label{eq2.2a}
\sup_{x\in \Delta_Q^*} \dist(x,\Gamma_{\sbf} )\,
+\,\sup_{y\in B_Q^*\cap\Gamma_{\sbf}}\dist(y,E) < \eta\,\ell(Q)\,,
\end{equation}
where $B_Q^*:= B(x_Q,K\ell(Q))$ and $\Delta_Q^*:= B_Q^*\cap E$.
\end{enumerate}
\end{lemma}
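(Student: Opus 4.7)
The lemma is a bilateral strengthening of the classical David--Semmes corona decomposition \cite{DS1,DS2}, whose one-sided version (``$E$ close to $\GS$'') is already available for UR sets. The plan is to run the one-sided David--Semmes machine first, and then use the \emph{bilateral weak geometric lemma} (BWGL)---a Carleson packing condition for $n$-dimensional ADR sets that is equivalent to UR---to refine the decomposition so that each Lipschitz graph $\GS$ also lies uniformly close to $E$ inside the enlarged ball $B_Q^*$.

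\textbf{The two inputs.} Fix auxiliary constants $\eta' \ll \eta$ and $K' \gg K$, to be calibrated at the end. First, apply the one-sided corona decomposition of \cite{DS1,DS2} at parameters $(\eta', K')$: this gives a disjoint splitting $\dd(E) = \G' \cup \B'$, where $\G'$ is organized into coherent regimes $\sbf'$, each carrying a Lipschitz graph $\Gamma_{\sbf'}$ of constant $\le \eta'$ satisfying the one-sided bound $\sup_{x \in B(x_Q,K'\ell(Q))\cap E}\dist(x,\Gamma_{\sbf'}) < \eta'\ell(Q)$ for every $Q \in \sbf'$, and such that $\B'$ together with the tops $\{Q(\sbf')\}$ is Carleson. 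Second, invoke BWGL at parameters tuned to $(\eta', K)$ to produce a Carleson family $\B''$ such that for every $Q \notin \B''$ there is an $n$-plane $P_Q$ with $\sup_{x \in \Delta_Q^*}\dist(x, P_Q) + \sup_{y \in P_Q \cap B_Q^*}\dist(y, E) < \eta'\ell(Q)$. Set $\B := \B' \cup \B''$, still Carleson.

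\textbf{Refinement.} Within each regime $\sbf'$, perform a new stopping time: include a descendant $Q \subseteq Q(\sbf')$ in the new regime $\sbf$ as long as $Q \in \sbf'$ and the bilateral condition \eqref{eq2.2a} holds for $\Gamma_{\sbf} := \Gamma_{\sbf'}$ at every ancestor of $Q$ in $\sbf'$; otherwise stop. By construction each $\sbf$ is coherent (it is an ``upper piece'' of $\sbf'$) and inherits the Lipschitz graph. The new maximal cubes $Q(\sbf)$ are either old tops $Q(\sbf')$, which are already Carleson, or dyadic children of cubes $Q^\sharp$ at which the bilateral condition first fails. It therefore suffices to show that every such $Q^\sharp$ lies in $\B$; then the collection of new tops is subordinate to the Carleson family $\{Q(\sbf')\} \cup \B$, and $\G := \dd(E) \setminus \B$, together with the subregimes $\sbf$, yields the desired decomposition.

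\textbf{Main obstacle: the geometric comparison.} The crux is the implication: if $Q \in \sbf'$ and $Q \notin \B''$, then the bilateral condition \eqref{eq2.2a} already holds at $Q$ with constant $\eta$. Because $E$ is ADR and $K' \gg K$, the set $E \cap B_Q^*$ contains enough points distributed in all $n$ ``directions'' that are simultaneously $\eta'\ell(Q)$-close to $\Gamma_{\sbf'}$ (by Step~1) and $\eta'\ell(Q)$-close to $P_Q$ (by Step~2); the rigidity of a Lipschitz graph with small constant then forces the base plane $\pi_{\sbf'}$ over which $\Gamma_{\sbf'}$ is a graph to be nearly parallel to $P_Q$ inside $B_Q^*$, with angular error $\lesssim \eta'$. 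Consequently $\Gamma_{\sbf'} \cap B_Q^*$ is $C\eta'\ell(Q)$-close to $P_Q$, and combining with the bilateral bound $\sup_{P_Q \cap B_Q^*}\dist(\cdot, E) < \eta'\ell(Q)$ gives $\sup_{y \in \Gamma_{\sbf'} \cap B_Q^*}\dist(y, E) < C\eta'\ell(Q) < \eta\ell(Q)$ once $\eta'$ is chosen sufficiently small relative to $\eta$ and the allowable parameters. Calibrating $\eta', K'$ at the end closes the argument. The delicate ingredient, and the step most likely to require care, is the rigidity argument that ``pins down'' the orientation of $\pi_{\sbf'}$ from the ADR-dense set of common near-points on $E$.
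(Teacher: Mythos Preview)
Your overall strategy---combine the unilateral David--Semmes corona with the BWGL, then prove the geometric comparison ``BWGL-good $+$ unilaterally close to $\Gamma_{\sbf'}$ $\Rightarrow$ bilaterally close''---is exactly the route the paper takes, and your sketch of the rigidity argument is sound (though the closeness of $\Gamma_{\sbf'}\cap B_Q^*$ to $P_Q$ is of order $K\eta'\ell(Q)$, not $\eta'\ell(Q)$, since the angular error $\lesssim\eta'$ accumulates over the diameter $\approx K\ell(Q)$; this forces $\eta'\lesssim \eta/K$, and BWGL should be run at scale $\approx 2K$ so that projections onto $P_Q$ of points $y\in\Gamma_{\sbf'}\cap B_Q^*$ stay in the BWGL-controlled region).

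There is, however, a genuine gap in your refinement step. With your stopping rule ``include $Q$ as long as the bilateral condition holds at every \emph{ancestor} of $Q$,'' the cube $Q^\sharp$ at which bilateral \emph{first} fails is itself included in the new regime $\sbf$ (all its ancestors are fine), yet \eqref{eq2.2a} fails at $Q^\sharp$, so property (3) is violated. If instead you require bilateral to hold at $Q$ as well, coherency breaks: a brother of $Q^\sharp$ at which bilateral happens to hold would remain in $\sbf$ while $Q^\sharp$ does not, so the common parent has some children in $\sbf$ and some out, contradicting condition (c) of Definition~\ref{d3.11}.

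The paper resolves this by \emph{not} stopping on the bilateral condition directly. Instead, it stops when $Q'$ \emph{or one of its brothers} lies in the BWGL-bad set $\B_0$; the ``or one of its brothers'' clause forces all siblings to stop together, guaranteeing coherency, while the fact that every surviving cube lies in $\G_0$ means the geometric comparison can then be invoked to verify \eqref{eq2.2a} a posteriori. The new tops are then cubes whose parent or a sibling is in $\B_0$, and their packing follows from the packing of $\B_0$ and doubling. You should reorganize your refinement along these lines.
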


Before proving the lemma, we recall the ``Bilateral Weak Geometric Lemma"
\cite[p. 32]{DS2}.
\begin{lemma}[\cite{DS2}]\label{BWGL} Let $E\subset \ree$ be a closed, $n$-dimensional ADR set.
Then $E$ is UR if and only if for every pair of positive constants $\eta \ll1$
and $K\gg 1$,
there is a disjoint decomposition
$\dd(E) = \G_0\cup\B_0$,
such that the cubes in $\B_0$ satisfy a Carleson packing condition
\begin{equation}\label{eq2.3}\sum_{Q'\subset Q, \,Q'\in\B_0} \sigma(Q')
\,\leq\, C_{\eta,K}\, \sigma(Q)\,,
\quad \forall Q\in \dd(E)\,,
\end{equation}
and such that for every $Q\in \G_0$, we have
\begin{equation}\label{eq2.4}
\inf_H\left(\sup_{x\in\Delta_Q^*} \dist(x,H)\,+\, \sup_{y\in H\cap B_Q^*} \dist(y,E)\right)
\,<\, \eta\, \ell(Q)\,,
\end{equation}
where the infimum runs over all hyperplanes $H$, and where $B_Q^*$ and $\Delta_Q^*$ are defined as in Lemma \ref{lemma2.1}.
\end{lemma}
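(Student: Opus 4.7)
The plan is to sketch the classical David--Semmes equivalence from \cite{DS2} between UR (defined here via BPLI) and the BWGL. The forward direction ``UR $\Rightarrow$ BWGL'' produces a bilateral hyperplane approximation from the Lipschitz‑image covering data, while the reverse direction ``BWGL $\Rightarrow$ UR'' extracts a Lipschitz image from the bilateral flatness.

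For ``$\Rightarrow$'' I would argue as follows. Fix $Q_0\in\dd(E)$ and apply BPLI at $x_{Q_0}$ at scale $\sim K\ell(Q_0)$ to obtain a Lipschitz map $\rho\colon\rn\to\ree$ with constant $M_0$, whose image covers a $\theta$-fraction of $\Delta(x_{Q_0},K\ell(Q_0))$ in $\sigma$-measure. By Rademacher, $\rho$ is a.e.\ affinely approximable, so for sub-cubes $Q\subset Q_0$ whose centers lie on the BPLI image and at scales below the affine-deterioration threshold, the tangent hyperplane $H_Q$ realizes \eqref{eq2.4}: it lies close to $\rho$'s image (by Rademacher), $\rho$'s image lies close to $E$ on the BPLI subset (giving the ``$E$ close to $H_Q$'' side), and $\rho$'s image has no holes by continuity (giving the ``$H_Q$ close to $E$'' side). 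The cubes not covered by this scheme --- centers outside the BPLI portion or at scales above the Rademacher threshold --- are packed by a stopping-time argument, with iteration to smaller scales handled by reapplying BPLI to each uncovered sub-cube and collecting the Carleson packing constants.

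For ``$\Leftarrow$'' I would construct a Lipschitz image covering a big piece of each $\Delta_{Q_0}$. A corona-style stopping-time decomposition groups the sub-cubes of $Q_0$ satisfying \eqref{eq2.4} into coherent regimes, each equipped with a common approximating hyperplane $H_\sbf$ updated only when the flatness data force a tilt of order $\eta$. Within the regime containing $Q_0$, the bilateral bound \eqref{eq2.4} propagated across scales builds a Lipschitz graph $\GS$ of slope $\lesssim\eta$ over $H_\sbf$ that stays $\eta\ell(Q)$-close to $E$ throughout; the Carleson packing of bad cubes then yields $\sigma\bigl(Q_0\cap\{\dist(\cdot,\GS)<\eta\ell(Q_0)\}\bigr)\gtrsim\sigma(Q_0)$, and composing the characteristic projection onto $H_\sbf$ with the graphing map produces the Lipschitz image required by BPLI.

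The main obstacle is the no-holes side of the forward direction. Unilateral flatness alone does not force the approximating hyperplane to stay near $E$ --- a priori, $E$ could have large gaps that $H$ traverses, even when $E$ hugs $H$ from one side. The BPLI hypothesis kills this possibility via continuity of $\rho$: its image has no internal holes at any scale, so sub-cubes sitting inside the BPLI-covered region automatically obtain the bilateral bound, and sub-cubes outside are Carleson-packed by BPLI itself through the iteration. Making this precise uniformly across all scales and locations is where the bulk of the work in \cite{DS2} lies, and in practice one often routes the argument through an intermediate characterization of UR (Condition B, $L^2$-boundedness of suitable singular integrals, or the unilateral WGL for $\beta_2$) before closing the loop with the bilateral flatness on both sides.
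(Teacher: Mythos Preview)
The paper does not prove this lemma at all: it is simply quoted from \cite[p.~32]{DS2} as a known characterization of uniform rectifiability, and then used as a black box in the proof of Lemma~\ref{lemma2.1}. So there is no ``paper's own proof'' to compare against.

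As for your sketch of the David--Semmes argument itself: the reverse direction (BWGL $\Rightarrow$ UR via a corona-type construction of a Lipschitz graph) is in the right spirit and close to what \cite{DS2} does. The forward direction, however, has a real gap. Rademacher's theorem gives only a.e.\ pointwise differentiability of $\rho$, with no uniform control on the scale at which the affine approximation kicks in; this does not produce a Carleson packing of the bad cubes. You seem to recognize this at the end when you mention routing through intermediate characterizations ($\beta$-numbers, Condition~B, etc.), and indeed that is how \cite{DS1,DS2} actually close the loop --- but then the Rademacher paragraph is not a proof sketch but a heuristic, and the honest statement is that the forward direction requires the full machinery of \cite{DS1,DS2}. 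Since the present paper only invokes the result, this is all beside the point here.
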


\begin{proof}[Proof of Lemma \ref{lemma2.1}]
A ``unilateral" version of Lemma \ref{lemma2.1}
has already appeared in \cite{DS1}, i.e.,
by \cite{DS1}, we know that Lemma \ref{lemma2.1} holds,
but with the bilateral estimate \eqref{eq2.2a} 
replaced by the unilateral bound
\begin{equation}\label{eq2.5}
\sup_{x\in \Delta_Q^*} \dist(x,\Gamma_{\sbf} )\,
<\, \eta\,\ell(Q)\,,\qquad \forall\,Q\in \sbf.
\end{equation}
The proof of Lemma \ref{lemma2.1} will be a rather straightforward
combination of this result  of \cite{DS1}, and Lemma \ref{BWGL}.

We choose $K_1\gg 1$, and $\eta_1\ll K_1^{-1}$,
and let $\dd=\G_1\cup\B_1$, and $\dd=\G_0\cup\B_0$, be,
respectively, the unilateral corona decomposition of \cite{DS1},
and the decomposition of Lemma \ref{BWGL}, corresponding to this choice of
$\eta$ and $K$.  Given $\sbf$, a stopping time regime
of the unilateral corona decomposition, we let
$\MS$ denote the set of $Q\in\sbf\cap\G_0$ for which
either $Q=Q(\sbf)$, or else the dyadic parent of $Q$, or  one of the brothers of $Q$,
belongs to $\B_0$.  For each $Q\in \MS$, we form a new stopping time regime,
call it $\sbf'$, as follows.  We set $Q(\sbf'):= Q$, and we then subdivide
$Q(\sbf')$ dyadically, stopping as soon as we reach a subcube $Q'$ such that
either $Q'\notin \sbf$, or else $Q'$, or one of its brothers, belongs to
$\B_0$.  In any such scenario, $Q'$ and all of its brothers are omitted from
$\sbf'$, and the parent of $Q'$ is then a minimal cube
of $\sbf'$.
We note that each such $\sbf'$ enjoys the following properties:
\begin{enumerate}
\item[(i)] $\sbf'\subset \sbf\cap\G_0$ (by definition).
\item[(ii)] $\sbf'$ is coherent, in the sense of  Lemma \ref{lemma2.1} (1)
(by the stopping time construction).
\end{enumerate}

If  $Q\in \sbf\cap\B_0$, for some $\sbf$,
then we add $Q$ to our new ``bad" collection,
call it $\B$,
i.e.,  $\B=\B_1\cup\B_0$.  Then clearly $\B$ satisfies a packing condition,
since it is the union of two collections, each of which packs.  Moreover,
 the collection
$\{Q(\sbf')\}_{\sbf'}$ satisfies a packing condition.  Indeed, by construction
$$\{Q(\sbf')\}_{\sbf'} \subset \{Q(\sbf)\}_{\sbf} \cup \M_1\,,$$
where $\M_1$ denotes the collection of cubes $Q$ having a parent or brother in $\B_0$.
Now for $\{Q(\sbf)\}_{\sbf}$ we already have packing.  For the cubes in $\M_1$,
and for any $R\in \dd(E)$, with dyadic parent $R^*$,
we have 
$$
\sum_{Q\in\M_1:\, Q\subset R} \sigma(Q)
\,\lesssim
\sum_{\Qt\in\B_0:\, \Qt\subset R^*}\sigma(\Qt)\,\lesssim
\sigma(R^*)\,\lesssim \sigma(R)\,,
$$
where $\Qt$ is either the parent or a brother of $Q$, belonging to $\B_0$,
and where we have used the packing condition for $\B_0$, and the doubling property
of $\sigma$.
Setting $\G:= \dd(E) \setminus \B$, we note that at this point we have verified
properties (1) and (2) of Lemma
\ref{lemma2.1}, for the decomposition $\dd(E) =\G\cup\B$, and the stopping time
regimes $\{\sbf'\}$.  It remains to verify property (3).

To this end, we consider one of the new stopping time regimes $\sbf'$, which by construction,
is contained in some $\sbf$.
Set $\Gamma_{\sbf'}:=\Gamma_{\sbf}$, and
fix $Q\in \sbf'$.  Let us now
prove \eqref{eq2.2a}.  The bound
\begin{equation}\label{eq2.7}
\sup_{x\in \Delta_Q^*} \dist(x,\Gamma_{\sbf'} )\,
<\, \eta_1\,\ell(Q)\,
\end{equation}
is inherited immediately from the unilateral condition \eqref{eq2.5}.  We now claim that
for $\eta_1\ll K_1^{-1}$,
\begin{equation}\label{eq2.8}
\sup_{y\in \frac12B_Q^*\cap \Gamma_{\sbf}} \dist(y,E )\,
<\, C K_1 \eta_1\,\ell(Q)\,.
\end{equation}
Taking the claim for granted momentarily, and having specified some $\eta,\,K$, we may obtain
\eqref{eq2.2a}  by choosing $K_1:= 2K$, and $\eta_1:= \eta/(CK_1)<\eta$.

We now establish the claim.
By construction of $\sbf'$, $Q\in \G_0$, so by \eqref{eq2.4},
there is a hyperplane $H_Q$ such that
\begin{equation}\label{eq2.9}
\sup_{x\in \Delta_Q^*}\dist(x,H_Q)\,+\,
\sup_{y\in H_Q\cap B_Q^*}\dist(y,E) \,<\, \eta_1\ell(Q)\,.
\end{equation}
There is another hyperplane $H_{\sbf} = H_{\sbf'}$ such that,
with respect to the co-ordinate system $\{(z,t): z\in H_{\sbf}, t\in \RR\}$, we can realize
$\Gamma_{\sbf}$ as a Lipschitz graph with
constant no larger than
$\eta_1$, i.e., $\Gamma_{\sbf}= \{(z,\vp_{\sbf}(z)):\, z\in H_{\sbf}\}$, with
$\|\vp\|_{\rm Lip} \leq \eta_1$.
Let $\pi_Q$ be the orthogonal projection onto $H_Q$,
and set $\hx:= \pi_Q(x_Q)$.
Thus $|x_Q-\hx|<\eta_1\ell(Q)$, by \eqref{eq2.9}.  Consequently,
for $\eta_1$ small, we have
$$B_1:= B\left(\hx, \frac34 K_1\ell(Q)\right)\, \subset\, \frac78 B_Q^*\,,$$
and
\begin{equation}\label{eq2.9a}
\frac12 B_Q^*\subset \frac78 B_1\,.
\end{equation}
Therefore, by \eqref{eq2.9}
\begin{equation}\label{eq2.10}
\dist(y,E) \leq\eta_1\ell(Q)\,,\qquad \forall y\in B_1\cap H_Q\,,
\end{equation}
and also, for $K_1$ large,
\begin{equation*}
A_1:=\left\{x\in E:\,\dist(x,B_1\cap H_Q)\leq \ell(Q)\right\}\subset \frac {15}{16} B_Q^*\,.
\end{equation*}
 Thus, $A_1\subset \Delta_Q^*$, so that, in particular, for
 $x\in A_1$, we have $\dist(x,\Gamma_{\sbf}) <\eta_1\ell(Q)$, by \eqref{eq2.5}.
 Combining the latter fact with \eqref{eq2.10} and the definition of $A_1$, we find that
\begin{equation}\label{eq2.11}
\dist(y,\Gamma_{\sbf}) \leq 2\eta_1\ell(Q)\,,\qquad \forall y \in B_1\cap H_Q\,.
\end{equation}
 We 
 cover $(7/8)B_1\cap H_Q$ 
 by non-overlapping $n$-dimensional cubes $P_k\subset B_1\cap H_Q$,
 centered at $y_k$, with side length $10\eta_1 \ell(Q)$, and we extend these
along an axis perpendicular to $H_Q$ to construct
$(n+1)$-dimensional cubes $I_k$,
  of the same length, also centered at $y_k$.
 By \eqref{eq2.11}, each $I_k$ meets $\Gamma_{\sbf}$.  Therefore, for $\eta_1$ small,
 $H_Q$ ``meets'' $H_{\sbf}$ at an angle $\theta$ satisfying
 $$\theta\approx\tan \theta \lesssim \eta_1\,,$$
 and $\Gamma_{\sbf}$ is a Lipschitz graph with respect to $H_Q$,
 with Lipschitz constant no larger than $C\eta_1$.
Also, by \eqref{eq2.11}, applied to $y=\hx$, there is a point $y_Q\in \Gamma_{\sbf}$ with
 $|\hx-y_Q|\leq 2\eta_1\ell(Q)$.
 Thus, for $y\in (1/2)B_Q^*\cap \Gamma_{\sbf}\subset (7/8) B_1\cap \Gamma_{\sbf}$
 (where we have used \eqref{eq2.9a}), we have
\begin{equation}\label{eq2.13}
 \dist(y,H_Q) \leq C K_1\eta_1\ell(Q)\ll\ell(Q)\,,
 \end{equation}
so that $\pi_Q(y)\in B_1\cap H_Q \subset B_Q^* \cap H_Q$.
Hence,
\begin{equation}\label{eq2.14}
\dist(\pi_Q(y),E)\,\leq \,\eta_1\ell(Q)\,,
\end{equation}
by \eqref{eq2.9}.  Combining \eqref{eq2.13} and \eqref{eq2.14},
we obtain \eqref{eq2.8}, as claimed.
 \end{proof}

\section{Corona type approximation by NTA domains with ADR boundaries}\label{s3}
In this section, we construct, for each stopping time regime $\sbf$
in Lemma \ref{lemma2.1}, a pair of NTA domains $\Omega_{\sbf}^\pm$, with ADR boundaries, which
provide a good approximation to $E$, at the scales within $\sbf$, in some appropriate sense.  To be a bit more precise,
$\Omega_{\sbf}:= \Omega_{\sbf}^+\cup \Omega_{\sbf}^-$ will be constructed as a sawtooth region
relative to some family of dyadic cubes, and the nature of this construction will be essential to the dyadic analysis that we will use below.
We first discuss some preliminary matters.

Let $\mathcal{W}=\W(\ree\setminus E)$ denote a collection
of (closed) dyadic Whitney cubes of $\ree\setminus E$, so that the cubes in $\mathcal{W}$
form a pairwise non-overlapping covering of $\ree\setminus E$, which satisfy
\begin{equation}\label{Whintey-4I}
4 \diam(I)\leq
\dist(4I,E)\leq \dist(I,E) \leq 40\diam(I)\,,\qquad \forall\, I\in \mathcal{W}\,\end{equation}
(just dyadically divide the standard Whitney cubes, as constructed in  \cite[Chapter VI]{St},
into cubes with side length 1/8 as large)
and also
$$(1/4)\diam(I_1)\leq\diam(I_2)\leq 4\diam(I_1)\,,$$
whenever $I_1$ and $I_2$ touch.


Let $E$ be an $n$-dimensional ADR set and pick two parameters $\eta\ll 1$ and $K\gg 1$. Define
\begin{equation}\label{eq3.1}
\W^0_Q:= \left\{I\in \W:\,\eta^{1/4} \ell(Q)\leq \ell(I)
 \leq K^{1/2}\ell(Q),\ \dist(I,Q)\leq K^{1/2} \ell(Q)\right\}.
 \end{equation}
 
\begin{remark}\label{remark:E-cks} 
We note that $\W^0_Q$ is non-empty,
 provided that we choose $\eta$ small enough, and $K$ large enough, depending only on dimension
 and the ADR constant of $E$.
 Indeed, given  a closed $n$-dimensional ADR set $E$, and
given $Q\in\dd(E)$, consider the ball $B_Q=B(x_Q,r)$, as defined in \eqref{cube-ball}-\eqref{cube-ball2},
with $r\approx\ell(Q)$, so that $\Delta_Q=B_Q\cap E\subset Q$.
By
 \cite[Lemma 5.3]{HM-I} , we have that for some $C=C(n,ADR)$,
$$
\big|\{Y\in\ree\setminus E: \,\delta(Y)<\epsilon r\}\cap B_Q\big|\le C\,\epsilon\,r^{n+1}\,,
$$
for every $0<\epsilon<1$. Consequently, fixing $0<\epsilon_0<1$ small enough, 
there exists $X_Q\in \frac12\,B_Q$, with $\delta(X_Q)\ge \epsilon_0\,r$. 
Thus, $B(X_Q,\epsilon_0\,r/2)\subset B_Q\setminus E$.  We shall refer to this point $X_Q$
as a ``Corkscrew point" relative to $Q$.  Now observe that $X_Q$ belongs to some
Whitney cube $I\in\W$, which will belong to $\W^0_Q$, for $\eta$ small enough and $K$ large enough.
 \end{remark}

Assume now that $E$ is UR  and make the corresponding bilateral corona decomposition of Lemma \ref{lemma2.1} with $\eta\ll 1$ and $K\gg 1$.
Given $Q\in \dd(E)$, for this choice of $\eta$ and $K$, we set (as above) $B_Q^*:= B(x_Q, K\ell(Q))$, where we recall that $x_Q$ is the ``center" of $Q$ (see \eqref{cube-ball}-\eqref{cube-ball2}). For a fixed stopping time regime $\sbf$, we choose a co-ordinate system
 so that $\Gamma_{\sbf} =\{(z,\vp_{\sbf}(z)):\, z\in \rn\}$, where $\vp_{\sbf}:\rn\mapsto
 \re$ is a Lipschitz function with
 $\|\vp\|_{\rm Lip} \leq\eta$.

\begin{claim}\label{c3.1} If $Q\in \sbf$, and $I\in \W^0_Q$, then $I$ lies either above
or below $\Gamma_{\sbf}$.  Moreover,
$\dist(I,\Gamma_{\sbf})\geq \eta^{1/2} \ell(Q)$ (and therefore, by \eqref{eq2.2a},  
$\dist(I,\Gamma_{\sbf}) \approx \dist(I,E)$, with  implicit constants that may depend on $\eta$
and $K$).
\end{claim}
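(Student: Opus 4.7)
\textbf{Proof plan for Claim \ref{c3.1}.} The plan is to first establish the quantitative separation bound $\dist(I,\GS)\geq \eta^{1/2}\ell(Q)$, and then deduce that $I$ lies on a single side of $\GS$ by a connectivity argument. The core mechanism is a triangle inequality that plays the Whitney lower bound $\dist(I,E)\geq 4\diam(I)\geq 4\eta^{1/4}\ell(Q)$ against the bilateral control $\dist(y,E)<\eta\ell(Q)$ on $B_Q^*\cap\GS$ furnished by \eqref{eq2.2a}; the gap between the exponents $1/4$ and $1$ on $\eta$ is what leaves room for an $\eta^{1/2}$ separation.

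First I would carry out a localization step: from the definition \eqref{eq3.1} of $\W^0_Q$, any $X\in I$ satisfies $|X-x_Q|\leq \diam(I)+\dist(I,Q)+\diam(Q)\leq CK^{1/2}\ell(Q)$, with $C$ depending only on dimension and the ADR constant. Consequently, if $y\in\GS$ lies within $\eta^{1/2}\ell(Q)$ of $X$, then $|y-x_Q|\leq 2CK^{1/2}\ell(Q)<K\ell(Q)$ once $K$ is large (depending only on dimension and the ADR constant), so $y\in B_Q^*$ and the bilateral estimate \eqref{eq2.2a} may legitimately be applied at $y$.

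Next, suppose for contradiction that some $X\in I$ and $y\in\GS$ satisfy $|X-y|<\eta^{1/2}\ell(Q)$. The localization step places $y\in B_Q^*$, and then
$$4\eta^{1/4}\ell(Q)\,\leq\,\dist(X,E)\,\leq\,|X-y|+\dist(y,E)\,<\,\eta^{1/2}\ell(Q)+\eta\ell(Q)\,\leq\,2\eta^{1/2}\ell(Q),$$
which forces $2\leq \eta^{1/4}$ and is impossible for $\eta\ll 1$. This proves the separation bound. Since $\GS$ is the graph of a Lipschitz function over $\rn$, it disconnects $\ree$ into two open connected components; the cube $I$ is connected and disjoint from $\GS$, hence lies entirely in one of them.

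For the parenthetical comparability $\dist(I,\GS)\approx \dist(I,E)$, I would pick $X\in I$ and a nearest point $x\in E$. Since $\dist(X,E)\leq 40\diam(I)\lesssim K^{1/2}\ell(Q)$, the same localization computation places $x$ in $\Delta_Q^*$, so \eqref{eq2.2a} yields $\dist(X,\GS)\leq \dist(X,E)+\eta\ell(Q)\lesssim \dist(X,E)$, the last step because $\eta\ell(Q)\lesssim \eta^{3/4}\dist(I,E)$. The reverse inequality is obtained by applying the same reasoning to the nearest point of $\GS$ to $X$, which again lies in $B_Q^*$ by a parallel computation. The only real obstacle is keeping track of which balls contain which nearest points so that \eqref{eq2.2a} is always legitimately invoked; conceptually, everything is forced by combining the bilateral estimate with the Whitney separation \eqref{Whintey-4I} and the exponent choice in the definition \eqref{eq3.1} of $\W^0_Q$.
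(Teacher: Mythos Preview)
Your proposal is correct and follows essentially the same approach as the paper: both argue by contradiction, localize a nearby point $y\in\GS$ into $B_Q^*$ via the $K^{1/2}$ bounds in the definition of $\W^0_Q$, invoke the bilateral estimate \eqref{eq2.2a} to get $\dist(y,E)<\eta\ell(Q)$, and then contradict the Whitney lower bound $\dist(I,E)\gtrsim \eta^{1/4}\ell(Q)$. You are somewhat more explicit than the paper in spelling out the connectivity step (that $I$, being connected and disjoint from the graph $\GS$, must lie in a single complementary component) and in justifying the parenthetical comparability $\dist(I,\GS)\approx\dist(I,E)$, both of which the paper leaves to the reader.
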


\begin{proof}[Proof of Claim \ref{c3.1}]
Suppose by way of contradiction that $\dist(I,\Gamma_{\sbf}) \leq \eta^{1/2}\ell(Q)$.
Then we may choose $y\in \GS$ such that
$$\dist(I,y)\leq\eta^{1/2}\ell(Q)\,.$$
By construction of $\W^0_Q$, it follows that for all $Z\in I$,
$|Z-y|\lesssim K^{1/2}\ell(Q)$.
Moreover, $|Z-x_Q|\lesssim K^{1/2}\ell(Q)$, and therefore
$|y-x_Q|\lesssim K^{1/2}\ell(Q)$.  In particular, $y\in B_Q^*\cap\GS$,
so by \eqref{eq2.2a}, $\dist(y,E)\leq\eta\,\ell(Q)$.  On the other hand,
choosing $Z_0\in I$  such that $|Z_0-y|=\dist(I,y) \leq\eta^{1/2}\ell(Q)$, we obtain
$\dist(I,E)\leq 2\eta^{1/2}\ell(Q)$.  For $\eta$ small, this contradicts the
Whitney construction, since $\dist(I,E) \approx \ell(I) \geq \eta^{1/4}\ell(Q)$.
\end{proof}

Next, given $Q\in\sbf$,  we augment $\W^0_Q$.  We split $\W^0_Q=\W_Q^{0,+} \cup \W_Q^{0,-}$,
where $I\in\W_Q^{0,+}$ if $I$ lies above
$\GS$, and  $I\in\W_Q^{0,-}$ if $I$ lies below
$\GS$.   Choosing $K$ large and $\eta$ small enough, by \eqref{eq2.2a}, 
we may assume that both $\W_Q^{0,\pm}$ are non-empty.
We focus on $\W_Q^{0,+}$, as the construction for $\W_Q^{0,-}$ is the same.
For each $I\in \W_Q^{0,+}$, let $X_I$ denote the center of $I$.
Fix one particular $I_0\in \W_Q^{0,+}$, with center $X^+_Q:= X_{I_0}$.
Let $\widetilde{Q}$ denote the dyadic parent of $Q$, unless $Q=Q(\sbf)$; in the latter case
we simply set $\Qt=Q$.  Note that
$\widetilde{Q}\in\sbf$,
by the coherency of $\sbf$.
By Claim \ref{c3.1}, for each $I$ in $\W_Q^{0,+}$, or in   $\W_{\widetilde{Q}}^{0,+}$,
we have
$$\dist(I,E)\approx\dist(I,Q)\approx\dist(I,\GS)\,,$$
where the implicit constants may depend on $\eta$ and $K$.  Thus, for each
such $I$, we may fix a Harnack chain, call it
$\mathcal{H}_I$, relative to the Lipschitz domain
$$\Omega_{\GS}^+:=\left\{(x,t) \in \ree: t>\vp_{\sbf}(x)\right\}\,,$$
connecting $X_I$ to $X_Q^+$.  By the bilateral approximation condition
\eqref{eq2.2a}, the definition of $\W^0_Q$, and the fact that $K^{1/2}\ll K$,
we may construct this Harnack Chain so that it consists of a bounded
number of balls (depending on $\eta$ and $K$), and stays a distance at least
$c\eta^{1/2}\ell(Q)$ away from $\GS$ and from $E$.
We let $\W^{*,+}_Q$ denote the set of all $J\in\W$ which meet at least one of the Harnack
chains $\mathcal{H}_I$, with $I\in \W_Q^{0,+}\cup\W_{\widetilde{Q}}^{0,+}$
(or simply $I\in \W_Q^{0,+}$, if $Q=Q(\sbf)$), i.e.,
$$\W^{*,+}_Q:=\left\{J\in\W: \,\exists \, I\in \W_Q^{0,+}\cup\W_{\Qt}^{0,+}
\,\,{\rm for\, which}\,\, \mathcal{H}_I\cap J\neq \emptyset \right\}\,,$$
where as above, $\widetilde{Q}$ is the dyadic parent of $Q$, unless
$Q= Q(\sbf)$, in which case we simply set $\Qt=Q$ (so the union is redundant).
We observe that, in particular, each $I\in \W^{0,+}_Q\cup \W^{0,+}_{\Qt}$
meets $\mathcal{H}_I$, by definition, and therefore
\begin{equation}\label{eqW}
\W_Q^{0,+}\cup \W^{0,+}_{\Qt} \subset \W_Q^{*,+}\,.
\end{equation}
Of course, we may construct $\W^{*,-}_Q$ analogously.
We then set
$$\W^*_Q:=\W^{*,+}_Q\cup \W^{*,-}_Q\,.$$
It follows from the construction of the augmented collections $\W_Q^{*,\pm}$ 
that there are uniform constants $c$ and $C$ such that
\begin{eqnarray}\label{eq2.whitney2}
& c\eta^{1/2} \ell(Q)\leq \ell(I) \leq CK^{1/2}\ell(Q)\,, \quad \forall I\in \mathcal{W}^*_Q,\\\nonumber
&\dist(I,Q)\leq CK^{1/2} \ell(Q)\,,\quad\forall I\in \mathcal{W}^*_Q.
\end{eqnarray}

Observe that $\W_Q^{*,\pm}$ and hence also $\W^*_Q$
have been defined for any $Q$ that belongs to some stopping time regime $\sbf$,
that is, for any $Q$ belonging to the ``good" collection $\G$ of Lemma \ref{lemma2.1}.  On the other hand, we
have defined $\W_Q^0$ for {\it arbitrary} $Q\in \dd(E)$.

We now set
\begin{equation}\label{Wdef}
\W_Q:=\left\{
\begin{array}{l}
\W_Q^*\,,
\,\,Q\in\G,
\\[6pt]
\W_Q^0\,,
\,\,Q\in\B
\end{array}
\right.\,,
\end{equation}
and for $Q \in\G$ we shall henceforth simply write $\W_Q^\pm$ in place of $\W_Q^{*,\pm}$.

Next, we choose a small parameter $\tau_0>0$, so that
for any $I\in \W$, and any $\tau \in (0,\tau_0]$,
the concentric dilate
$I^*(\tau):= (1+\tau) I$ still satisfies the Whitney property
\begin{equation}\label{whitney}
\diam I\approx \diam I^*(\tau) \approx \dist\left(I^*(\tau), E\right) \approx \dist(I,E)\,, \quad 0<\tau\leq \tau_0\,.
\end{equation}
Moreover,
for $\tau\leq\tau_0$ small enough, and for any $I,J\in \W$,
we have that $I^*(\tau)$ meets $J^*(\tau)$ if and only if
$I$ and $J$ have a boundary point in common, and that, if $I\neq J$,
then $I^*(\tau)$ misses $(3/4)J$.
Given an  arbitrary $Q\in\dd(E)$, we may define an associated
Whitney region $U_Q$ (not necessarily connected), as follows:
\begin{equation}\label{eq3.3aa}
U_Q=U_{Q,\tau}:= \bigcup_{I\in \W_Q} I^*(\tau)
\end{equation}
For later use, it is also convenient to introduce some fattened version of $U_Q$:  if $0<\tau\le \tau_0/2$,
\begin{equation}\label{eq3.3aa-fat}
\widehat{U}_Q=U_{Q,2\,\tau}:= \bigcup_{I\in \W_Q} I^*(2\,\tau). 
\end{equation}
If $Q\in\G$, then $U_Q$ splits into exactly two connected components
\begin{equation}\label{eq3.3b}
U_Q^\pm= U_{Q,\tau}^{\pm}:= \bigcup_{I\in \W^{\pm}_Q} I^*(\tau)\,.
\end{equation}
When the particular choice of $\tau\in (0,\tau_0]$ is not important,
for the sake of notational convenience, we may
simply write $I^*$, $U_Q$,  and $U_Q^{\pm}$ in place of
 $I^*(\tau)$, $U_{Q,\tau}$, and $U_{Q,\tau}^{\pm}$.
We note that for $Q\in\G$, each $U_Q^{\pm}$ is Harnack chain connected, by construction
(with constants depending on the implicit parameters $\tau, \eta$ and $K$);
moreover, for a fixed stopping time regime $\sbf$,
if $Q'$ is a child of $Q$, with both $Q',\,Q\in \sbf$, then
$U_{Q'}^{+}\cup U_Q^{+}$
is Harnack Chain connected, and similarly for
$U_{Q'}^{-}\cup U_Q^{-}$.  

We may also define ``Carleson Boxes" relative to any $Q\in\dd(E)$, by
\begin{equation}\label{eq3.3a}
T_Q=T_{Q,\tau}:=\interior\left(\bigcup_{Q'\in\dd_Q} U_{Q,\tau}\right)\,,
\end{equation}
where
\begin{equation}\label{eq3.4a}
\dd_Q:=\left\{Q'\in\dd(E):Q'\subset Q\right\}\,.
\end{equation}
Let us note that we may choose $K$ large enough so that, for every $Q$,
\begin{equation}\label{eq3.3aab}
T_Q \subset B_Q^*:= B\left(x_Q,K\ell(Q)\right)\,.
\end{equation}

For future reference, we also introduce dyadic sawtooth regions as follows.
Given a family $\mathcal{F}$ of disjoint cubes $\{Q_j\}\subset \mathbb{D}$, we define
the {\bf global discretized sawtooth} relative to $\F$ by
\begin{equation}\label{eq2.discretesawtooth1}
\dd_{\F}:=\dd\setminus \bigcup_{\F} \dd_{Q_j}\,,
\end{equation}
i.e., $\dd_{\F}$ is the collection of all $Q\in\dd$ that are not contained in any $Q_j\in\F$.
Given some fixed cube $Q$,
the {\bf local discretized sawtooth} relative to $\F$ by
\begin{equation}\label{eq2.discretesawtooth2}
\dd_{\F,Q}:=\dd_Q\setminus \bigcup_{\F} \dd_{Q_j}=\dd_\F\cap\dd_Q.
\end{equation}
Note that in this way $\dd_Q=\dd_{\textup{\O},Q}$.

Similarly, we may define geometric sawtooth regions as follows.
Given a family $\mathcal{F}$ of disjoint cubes $\{Q_j\}\subset \mathbb{D}$,
we define the {\bf global sawtooth} and the {\bf local sawtooth} relative to $\mathcal{F}$ by respectively
\begin{equation}\label{eq2.sawtooth1}
\Omega_{\mathcal{F}}:= {\rm int } \bigg( \bigcup_{Q'\in\dd_\F} U_{Q'}\bigg)\,,
\qquad
\Omega_{\mathcal{F},Q}:=  {\rm int } \bigg( \bigcup_{Q'\in\dd_{\F,Q}} U_{Q'}\bigg)\,.
\end{equation}
Notice that $\Omega_{\textup{\O},Q}=T_Q $.
For the sake of notational convenience, given a pairwise disjoint family $\F\in \dd$, and a cube $Q\in \dd_{\F}$, we set
\begin{equation}\label{Def-WF}
\W_{\F}:=\bigcup_{Q'\in\dd_{\F}}\W_{Q'}\,,\qquad
\W_{\F,Q}:=\bigcup_{Q'\in\dd_{\F,Q}}\W_{Q'}\,,\end{equation}
so that in particular, we may write
\begin{equation}\label{eq3.saw}
\Omega_{\mathcal{F},Q}={\rm int }\,\bigg(\bigcup_{I\in\,\W_{\F,Q}} I^*\bigg)\,.
\end{equation}

It is convenient at this point to introduce some additional terminology.
\begin{definition}\label{d3.2}  Given $Q\in \G$, and hence in  some $\sbf$, we shall refer to the point
$X_Q^+$ specified above, as the ``center" of $U^{+}_Q$ (similarly, the analogous
point $X_Q^-$, lying below
$\GS$, is the ``center" of $U^{-}_Q$).  We also set $Y_Q^\pm := X^\pm_{\Qt}$,
and we call this point the ``modified center" of $U_Q^\pm$, where as above $\Qt$ is the dyadic parent of
$Q$, unless $Q=Q(\sbf)$, in which case $Q=\Qt$, and $Y_Q^\pm=X_Q^\pm$.
\end{definition}

\smallskip

\begin{remark}\label{r3.11a}
We recall that, by construction
(cf. \eqref{eqW}, \eqref{Wdef}), $\W_{\Qt}^{0,\pm}\subset \W_Q$, and therefore
$Y_Q^\pm \in U_Q^\pm\cap U^\pm_{\Qt}$.  Moreover, since $Y_Q^\pm$ is the center of some $I\in \W_{\Qt}^{0,\pm}$,
we have that $\dist(Y_Q^\pm, \partial U_Q^\pm)\approx \dist(Y_Q^\pm, \partial U_{\Qt}^\pm) \approx \ell(Q)$
(with implicit constants possibly depending on
$\eta$ and/or $K$)
\end{remark}

\smallskip

\begin{remark}\label{r3.11}
Given a stopping time regime $\sbf$ as in Lemma \ref{lemma2.1}, for any semi-coherent
subregime (cf. Definition \ref{d3.11}) $\sbf'\subset \sbf$ (including, of course, $\sbf$ itself), we now set
\begin{equation}\label{eq3.2}
\Omega_{\sbf'}^\pm = {\rm int}\left(\bigcup_{Q\in\sbf'} U_Q^{\pm}\right)\,,
\end{equation}
and let $\Omega_{\sbf'}:= \Omega_{\sbf'}^+\cup\Omega^-_{\sbf'}$.
Note that implicitly, $\Omega_{\sbf'}$ depends upon $\tau$ (since $U_Q^\pm$ has such dependence).
When it is necessary to consider the value of $\tau$
explicitly, we shall write $\Omega_{\sbf'}(\tau)$.
\end{remark}

Our main geometric lemma is the following.
\begin{lemma}\label{lemma3.15} Let $\sbf$ be a given
stopping time regime as in Lemma \ref{lemma2.1},
and let $\sbf'$ be any nonempty, semi-coherent  subregime of $\sbf$.
Then for $0<\tau\leq\tau_0$, with $\tau_0$ small enough,
each of $\Omega^\pm_{\sbf'}$ is an NTA domain, with ADR boundary.
The constants in the NTA and ADR conditions depend only on $n,\tau,\eta, K$, and the
ADR/UR constants for $E$.
\end{lemma}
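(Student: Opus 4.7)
The plan is to verify, for $\Omega^+_{\sbf'}$ (the $-$ side being symmetric), the three NTA conditions together with ADR of $\partial\Omega^+_{\sbf'}$. The structural backbone is the observation that, by Claim~\ref{c3.1} and the construction of $\W^{*,+}_Q$ (whose Harnack chains stay above $\GS$ at distance $\gtrsim \eta^{1/2}\ell(Q)$), every $I\in \W^{*,+}_Q$ satisfies $\dist(I,\GS)\approx \dist(I,E)\approx \ell(I)$; picking $\tau_0$ small preserves this under the fattening $I\mapsto I^*(\tau)$. Hence $\Omega^+_{\sbf'}\subset \Omega^+_{\GS}$, the open Lipschitz half-space above $\GS$, which has Lipschitz constant at most $\eta\ll 1$ and is itself NTA with uniform constants. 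This ambient Lipschitz domain will serve as a scaffold for building Harnack chains and locating exterior Corkscrew points.

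For the Harnack chain condition I exploit the semi-coherence of $\sbf'$: if $Q\in\sbf'$ and $Q\ne Q(\sbf')$, then $Q\subset \widetilde Q\subset Q(\sbf')$ forces the dyadic parent $\widetilde Q$ into $\sbf'$ as well. By construction $\W^{0,+}_{\widetilde Q}\subset \W^{*,+}_Q$, and in particular the modified center $Y_Q^+$ (cf.\ Remark~\ref{r3.11a}) lies in $U_Q^+\cap U_{\widetilde Q}^+$ at distance $\approx \ell(Q)$ from each of $\partial U_Q^+$ and $\partial U_{\widetilde Q}^+$; so $U_{\widetilde Q}^+\cup U_Q^+$ admits a Harnack chain of uniformly bounded length (depending on $\eta,K,\tau$) between any pair of its ``center'' points. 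Given $X,X'\in\Omega^+_{\sbf'}$ with $|X-X'|\le \Lambda\rho$ and $\dist(X,\partial\Omega^+_{\sbf'}),\,\dist(X',\partial\Omega^+_{\sbf'})\ge \rho$, I locate $Q_X,Q_{X'}\in\sbf'$ with $X\in U_{Q_X}^+$, $X'\in U_{Q_{X'}}^+$; by \eqref{eq2.whitney2}, $\ell(Q_X),\ell(Q_{X'})\approx \rho$, and the lowest common dyadic ancestor $Q^\ast\in\sbf'$ (which exists by semi-coherence, since both cubes are contained in $Q(\sbf')$) satisfies $\ell(Q^\ast)\lesssim \max(\Lambda,K^{1/2})\rho$. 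Walking up from $Q_X$ to $Q^\ast$ and back down to $Q_{X'}$ via the inter-generational connectivity just described yields a chain of length $\lesssim \log_2\Lambda + C(\eta,K)$, which suffices.

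The Corkscrew and ADR properties require case analysis. For the interior Corkscrew at $x\in\partial\Omega^+_{\sbf'}$ and scale $r<\diam(\partial\Omega^+_{\sbf'})$, I select $Q\in\sbf'$ within distance $\lesssim r$ of $x$ with $\ell(Q)\approx r$ whose Whitney region $U_Q^+$ meets $B(x,r)$, and take the center of any $I\in \W^{*,+}_Q$ with $\ell(I)\approx r$. For the exterior Corkscrew, two cases arise: if $x$ lies on a fictitious face of some $I^*(\tau)\subset U_Q^+$, then by the pairwise-disjointness property of $\{I^*(\tau)\}$ the Whitney cube $J$ on the opposite side of that face satisfies $(3/4)J\subset \ree\setminus\overline{\Omega^+_{\sbf'}}$ and has $\ell(J)\approx r$; if $x\in E$, then by \eqref{eq2.2a} one has $\dist(x,\GS)\le \eta\ell(Q)$ for any $Q\in\sbf$ with $x\in \Delta_Q^\ast$, so $B(x,r)\cap \Omega^-_{\GS}$ contains a Whitney-cube ball from $\W^{0,-}_Q$ of radius $\approx r$, which lies in $\ree\setminus\overline{\Omega^+_{\sbf'}}$ by the containment $\Omega^+_{\sbf'}\subset \Omega^+_{\GS}$. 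ADR of $\partial\Omega^+_{\sbf'}$ then follows from a standard counting argument combining Whitney-face areas with the ADR of $E$ on the portion of $\partial\Omega^+_{\sbf'}$ adjacent to $E$, with the lower bound furnished by the interior Corkscrew. The main obstacle is the exterior Corkscrew at $x\in E\cap\partial\Omega^+_{\sbf'}$ across all scales down to the minimal cubes of $\sbf'$: here one must verify that the $-$ side of $\GS$ is genuinely accessible at every such scale, which is where both the bilateral estimate \eqref{eq2.2a} and the symmetric non-emptiness of $\W^{0,\pm}_Q$ (guaranteed by choosing $K$ large and $\eta$ small) become essential.
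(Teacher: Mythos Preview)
Your outline has the right intuitions, but the Harnack Chain argument contains a genuine gap. You claim that the lowest common dyadic ancestor $Q^\ast\in\sbf'$ of $Q_X$ and $Q_{X'}$ satisfies $\ell(Q^\ast)\lesssim \max(\Lambda,K^{1/2})\rho$. This is false in general: two dyadic cubes can be adjacent (distance~$0$ apart) yet lie in different children of an ancestor arbitrarily high in the tree, so their lowest common ancestor may have length comparable to $\ell(Q(\sbf'))$, not to $\Lambda\rho$. Consequently the ``walk up to $Q^\ast$ and back down'' produces a chain whose length is $\approx \log_2\big(\ell(Q^\ast)/\ell(Q_X)\big)$, which is \emph{not} controlled by $C(\Lambda)$. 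The paper avoids this by never seeking a common ancestor: instead it chooses ancestors $\widetilde Q_1\supset Q_X$ and $\widetilde Q_2\supset Q_{X'}$ of the \emph{same} length $\approx MR$ (with $R=|X-X'|$), and then shows directly that $U^+_{\widetilde Q_1}\cap U^+_{\widetilde Q_2}\neq\emptyset$ by exhibiting a single Whitney cube $I$ belonging to both $\W^0_{\widetilde Q_1}$ and $\W^0_{\widetilde Q_2}$ (this is where the point $Z^+_{\widetilde Q_1}$ of \eqref{eq3.6}--\eqref{eq3.8} enters). The chain then threads $X\to Y^+_{\widetilde Q_1}\to Z^+_{\widetilde Q_1}\to Y^+_{\widetilde Q_2}\to X'$, with length depending only on $\Lambda$, $\eta$, $K$.

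Two smaller issues. First, your Corkscrew case analysis is incomplete: when $x$ lies on a face of some $I^*$ you only obtain an exterior cube $J$ with $\ell(J)\approx\ell(I)\approx\delta(x)$, which is useless once $r\gg\delta(x)$; the paper handles this regime (its Case~2a) by passing to an ancestor $\widetilde Q\in\sbf'$ with $\ell(\widetilde Q)\approx M^{-1/4}r$ and again using the points $Z^\pm_{\widetilde Q}$. Second, for the lower ADR bound you invoke only the interior Corkscrew; one actually needs the \emph{two-sided} Corkscrew together with the relative isoperimetric inequality (applied after the upper ADR bound has established local finite perimeter), exactly as the paper does.
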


\begin{proof}  We fix a small $\tau>0$ as above, defining the dilated Whitney cubes $I^*$,
and we leave this parameter implicit.

We note that in the notation of \eqref{eq2.sawtooth1},
$\Omega_{\sbf'}$ is the dyadic sawtooth region
$\Omega_{\F,Q(\sbf')}$, where $Q(\sbf')$ is the maximal cube in
$\sbf'$, and $\F$ is the family consisting of the sub-cubes of $Q(\sbf')$
that are maximal with respect to non-membership in $\sbf'$.
Then $\pom_{\sbf'}$ satisfies the ADR property, by Appendix \ref{appa} below.
The upper ADR bound for each of $\pospp$ and $\pom_{\sbf'}^-$ is then trivially
inherited from that of $\pom_{\sbf'}$  and $E$. 
 With the upper ADR property in hand,
we obtain that in particular, each of $\Omega_{\sbf'}^\pm$ is a domain of locally finite perimeter,
by the criterion in \cite[p. 222]{EG}.
The lower ADR bound then follows immediately from the local isoperimetric
inequality  \cite[p. 190]{EG}, once we have established that each of $\Omega_{\sbf'}^\pm$ enjoys a 2-sided Corkscrew condition.  Alternatively, the lower ADR bound for  $\Omega_{\sbf'}^\pm$
can be deduced by carefully following the relevant arguments in Appendix \ref{appa},
and observing that they can be applied to each of $\Omega_{\sbf'}^\pm$ individually.

We now verify the NTA properties for $\Omega_{\sbf'}^+$ (the proof for $\Omega^-_{\sbf'}$ is the same).

\noindent{\it Corkscrew condition}.   We will show that $B(x,r)$ contains
both interior and exterior Corkscrew points for $\ospp$,
for any $x\in \pospp$, and
$0<r\leq 2\diam Q(\sbf')$.  Let $M$ be a large number to be chosen,
depending only on the various parameters given in the statement of the lemma.
There are several cases.  We recall that $\delta(X):=\dist(X,E)$.

\noindent{\bf Case 1}:  $r<M\delta(x)$.  In this case, $x$ lies on a face of a fattened
Whitney cube $I^*$ whose interior lies in $\ospp$, but also $x\in J$ for some $J\notin
\W(\sbf'):=\cup_{Q\in\sbf'} \W_Q$.  By the nature of Whitney cubes,
we have $\ell(I)\approx\ell(J) \gtrsim r/M$, so $B(x,r)\cap\ospp$ contains an interior
Corkscrew point in $I^*$, and $B(x,r)\setminus\ospp$
contains an exterior Corkscrew point in $J$ (with constants
possibly depending on $M$).

\noindent{\bf Case 2}:  $r\geq M\delta(x)$.  We recall that
$\sbf'\subset\sbf$, for some regime $\sbf$ as in Lemma \ref{lemma2.1}.
Note that
\begin{equation}\label{eq3.4}
\delta(x) \approx \dist(x,\GS)\,, \qquad \forall x\in \overline{\osp}\,\,\,{\rm (hence}\,\, \forall
x\in \overline{\ospp})\,;
\end{equation}
indeed the latter  holds for $X\in\osp$, by Claim \ref{c3.1} and
the construction of $\Omega_{\sbf}$, and therefore the same
is true for $x\in \posp$.

\noindent{\bf Case 2a}: $\delta(x)>0$.  In this case, $x$ lies on a face of some $I^*$,
with $I\in \W(\sbf')$, so $I\in \W^{+}_{Q_I}$, for some $Q_I \in \sbf'$.
We then have
$$
\ell(Q_I) \approx \ell(I)\approx \delta(x) \approx \dist(I,Q_I) \lesssim r/M\ll r\,,
$$
if $M$ is large depending on $\eta$ and $K$.  Thus,
$Q_I\subset B(x,M^{-1/2} r)$.   The semi-coherency of $\sbf'$ allows us to  choose $\Qt\in \sbf'$,
with $\ell(\Qt) \approx M^{-1/4} r$,  such that $Q_I\subset\Qt$.
 Set $\bt= B(x_{\Qt}, \ell(\Qt))$, and observe that for $M$ large,
 $\bt\subset B(x,r/2)$.  Therefore, it is enough to show that
 $\bt\cap\ospp$ and $\bt\setminus\ospp$ each contains a Corkscrew point
 at the scale $\ell(\Qt)$.  To this end, we first note that since $\Qt\in\sbf'\subset\sbf$,
 \eqref{eq2.2a} implies that
 there is a point $z_{\Qt}\in\GS$ such that
 $$|x_{\Qt}-z_{\Qt}|\leq \,\eta\, \ell(\Qt)\,.$$
 Viewing $\GS$ as the graph $t=\vp_{\sbf}(y)$, so that $z_{\Qt}=:(\tilde{y},\vp_{\sbf}(\tilde{y}))$,
 we set
\begin{equation}\label{eq3.6}
Z_{\Qt}^\pm:=\big(\tilde{y},\vp_{\sbf}(\tilde{y})\pm\eta^{1/8}\ell(\Qt)\big)\,.
\end{equation}
Then by the triangle inequality
$$|Z_{\Qt}^\pm -x_{\Qt}|\lesssim \eta^{1/8}\ell(\Qt)\,.
$$
In particular, $Z^\pm_{\Qt}\in \widetilde{B}\subset  B(x,r/2)$.  Moreover, for $\eta$ small, by \eqref{eq2.2a} and the fact that the graph $\GS$
 has small Lipschitz constant,  we have
 \begin{equation}\label{eq3.7}
\delta(Z^\pm_{\Qt})\approx \dist(Z_{\Qt}^\pm,\GS)\approx \eta^{1/8}\ell(\Qt)\,.
\end{equation}
 Consequently,
 there exist $I^\pm\in\W$ such that $Z^\pm_{\Qt}\in I^\pm$, and
 \begin{equation}\label{eq3.8}
 \ell(I^\pm)\approx \dist(I^\pm,\Qt)\approx\eta^{1/8}\ell(\Qt)\,.
 \end{equation}
 Thus, $I^\pm \in\W_{\Qt}^\pm$, so
 $Z^\pm_{\Qt}\in I^\pm \subset \interior (1+\tau)I^\pm \subset  \Omega_{\sbf'}^\pm$, and therefore
 \begin{equation}\label{Z-cks}
 \dist(Z^\pm_{\Qt},\pom_{\sbf'}^\pm)
\gtrsim 
\tau\,\ell(I^\pm)
\approx
\tau\eta^{1/8}\ell(\Qt)
\approx
\tau\eta^{1/8}M^{-1/4} r\,.
 \end{equation}
Consequently, $Z_{\Qt}^+$ and $Z_{\Qt}^-$ are respectively,
 interior and exterior Corkscrew points for $\Omega_{\sbf'}$, relative to the ball
 $B(x,r)$.

 \begin{remark}\label{r3.9} We note for future reference
 that the previous construction depended only upon the
 fact that $\Qt\in\sbf'\subset\sbf$:  i.e.,  for any such $\Qt$, we may construct $Z_{\Qt}^\pm$ as in \eqref{eq3.6},
 satisfying  \eqref{eq3.7} and  \eqref{Z-cks}, and contained in some $I^\pm \in\W$
 satisfying \eqref{eq3.8}.
 \end{remark}

\noindent{\bf Case 2b}: $\delta(x)=0$.  In this case $x\in E\cap \GS$, by \eqref{eq3.4}.
Suppose for the moment that there is a cube $Q_1\in\sbf'$, with $\max\{\diam(Q_1),\ell(Q_1)\}\leq r/100$,
such that $x\in \overline{Q_1}$;
in this case we choose $\Qt\in\sbf'$, containing $Q_1$, 
with $\ell(\Qt)\approx r$, and $B(x_{\Qt},\ell(\Qt))\subset  B(x,r/2)$,
and we may then repeat the argument of Case 2a.  We therefore need only show that there is always such a $Q_1$.

Since $x\in \pospp$, there exists a sequence $\{X_m\}\subset \Omega^+_{\sbf'}$,
with 
$|X_m-x|<2^{-m}$.  For each $m$, there is
some $Q_m \in\sbf'$, with $X_m\in I_m^*$, and $I_m\in\W^+_{Q_m}$.
By construction,
$$\ell(Q_m)\approx \ell(I_m)\approx \dist(I^*_m,Q_m)\approx\dist(I_m^*,E)\leq
\dist(I^*_m,x)\leq|X_m-x|<2^{-m}\,,$$
where the implicit constants may depend upon $\eta$ and $K$.
Thus,
$$\dist(Q_m,x)\leq C_{\eta,K}2^{-m} \ll r\,,$$
for $m$ sufficiently large.  For each such $m$, we choose $Q_1^m$ with
$Q_m\subset Q_1^m\subset Q(\sbf')$ (hence $Q_1^m\in\sbf'$), and $c_0r
\leq \max\{\diam (Q_1^m),\ell(Q_1^m)\}\leq r/100$, for some fixed constant $c_0$.  Since each such
$Q_1^m\subset B(x,r)$, there are at most a bounded number of distinct
such $Q_1^m$, so at least one of these, call it $Q_1$, occurs infinitely
often as $m\to\infty$.  Hence $\dist (x,Q_1)= 0$, i.e., $x\in \overline{Q_1}$.

\noindent{\it Harnack Chain condition}.  Fix $X_1,X_2 \in\Omega^+_{\sbf'}$.
Suppose $|X_1-X_2|=:R$.  Then $R\lesssim K^{1/2} \ell(Q(\sbf'))$.
Also, there are cubes $Q_1,Q_2\in\sbf'$, and
fattened Whitney boxes $I_1^*$, $I_2^*$ (corresponding to
$I_i\in\W_{Q_i}^{+},\,i=1,2$), such that $X_i\in I_i^*\subset U^{+}_{Q_i},\, i=1,2$,
and therefore $\delta(X_i)\approx \ell(Q_i)$ (depending on $\eta$ and $K$).
We may suppose further that
$$R\leq M^{-2} \ell(Q(\sbf'))\,,$$
where $M$ is a large number to be chosen, for otherwise,
we may connect $X_1$ to $X_2$ via a Harnack path through
$X^+_{Q(\sbf')}$ (the ``center" of $U^{+}_{Q(\sbf')}$, cf. Definition
\ref{d3.2} above).

\noindent{\bf Case 1}:   max$(\delta(X_1),\delta(X_2))\geq M^{1/2} R$;  say
WLOG that $\delta(X_1)\geq M^{1/2} R$.  Then also
$\delta(X_2)\geq (1/2)  M^{1/2} R$, by the triangle inequality, since
$|X_1-X_2|=R$.  For $M$ large enough, depending on $\eta$ and $K$,
we then have that min$(\ell(I_1),\ell(I_2))\geq M^{1/4} R$.  Note that
$\dist(I^*_1,I^*_2)\leq R$.
By the Whitney construction, for sufficiently small choice of
 the fattening parameter $\tau$,
if $\dist(I^*_1,I^*_2)\ll\min(\ell(I_1),\ell(I_2))$,
then the fattened cubes
$I_1^*$ and $I^*_2$ overlap.  In the present case, the latter scenario holds
if $M$ is chosen large enough, and we may then clearly form a Harnack Chain connecting
$X_1$ to $X_2$.

\noindent{\bf Case 2}:  max$(\delta(X_1),\delta(X_2))< M^{1/2} R$.
Then, since
$$\ell(Q_i)\approx\ell(I_i)
\approx \delta(X_i)\approx\dist(I_i,Q_i)$$
(depending on $\eta$ and $K$),  we have that
$\dist(Q_1,Q_2)\leq M^{3/4} R$,
for $M$ large enough.  We now choose $\Qt_i\in\sbf'$, with $Q_i \subset\Qt_i$,
such that $\ell(\Qt_1)=\ell(\Qt_2)\approx MR$ .  Then
 \begin{equation}\label{eq3.10}
\dist(\Qt_1,\Qt_2)\leq M^{3/4} R \approx M^{-1/4} \ell(\Qt_i)\,,\quad i=1,2\,.
\end{equation}
For $M$ large enough, it then follows that $U^{+}_{\Qt_1}$ meets
$U^{+}_{\Qt_2}$, by construction.  Indeed, let $Z^+_{\Qt_1}$ denote the
 point defined in \eqref{eq3.6}, relative to the cube $\Qt_1\in\sbf'\subset\sbf$.  Then $Z_{\Qt_1}^+$
belongs to some $I\in \W$, with
$$\ell(I) \approx \dist(I,\Qt_1) \approx\eta^{1/8} \ell(\Qt_1)$$
(cf. Remark \ref{r3.9}).     This clearly implies that $I\in \W_{\Qt_1}$.
On the other hand by \eqref{eq3.10} and since $\ell(\Qt_1)\approx \ell(\Qt_2)$,
for $M$ large enough we have
$$
\dist(I,\Qt_2)
\lesssim
\dist(I,\Qt_1)+\ell(\Qt_1)+\dist(\Qt_1,\Qt_2)
\lesssim \ell(\Qt_2)
\le
\sqrt{K}\,\ell(\Qt_2),
$$
and therefore $I\in \W_{\Qt_2}^0\subset \W_{\Qt_2}$.
Consequently, $I\in \W_{\Qt_1}\cap \W_{\Qt_2}$, so
$I^*\subset U^{+}_{\Qt_1}\cap U^{+}_{\Qt_2}$.   
We may therefore form a Harnack Chain from $X_1$ to $X_2$ by passing through $Z_{\Qt_1}^+$. 
\end{proof}

\section{Carleson measure estimate for bounded harmonic functions:  proof of Theorem \ref{t1}.}

In this section we give the proof of Theorem \ref{t1}.
We will use the method of ``extrapolation of Carleson measures", a bootstrapping procedure for
lifting the Carleson measure constant, developed by J. L. Lewis \cite{LM}, and based on
the Corona construction of Carleson \cite{Car} and Carleson and Garnett \cite{CG}
(see also \cite{HL}, \cite{AHLT}, \cite{AHMTT}, \cite{HM-TAMS}, \cite{HM-I}).

Let $E\subset \ree$ be a UR set of co-dimension 1.    We fix  positive numbers $\eta\ll 1$,
and $K\gg 1$, and for these values of $\eta$ and $K$,
we perform the bilateral Corona decomposition of $\dd(E)$ guaranteed by Lemma \ref{lemma2.1}.
Let $\M:=\{Q(\sbf)\}_{\sbf}$ denotes the collection of cubes which are the maximal elements of the stopping time regimes in $\G$.
Given a cube $Q\in \dd(E)$, we set
\begin{equation}\label{eq4.0}
\alpha_Q:= 
\begin{cases} \sigma(Q)\,,&{\rm if}\,\, Q\in \M\cup\B, \\
0\,,& {\rm otherwise}.\end{cases}
\end{equation}
Given  any collection $\dd'\subset\dd(E)$, we define
\begin{equation}\label{eq4.1}
\mut(\dd'):= \sum_{Q\in\dd'}\alpha_{Q}.
\end{equation}
We recall that $\dd_Q$ is the ``discrete Carleson region
relative to $Q$",
defined in \eqref{eq3.4a}. 
Then by Lemma \ref{lemma2.1} (2), we have the discrete Carleson measure estimate
\begin{multline}\label{eq4.3x}
\mut(\dd_Q):=\sum_{Q'\subset Q, \,Q'\in\B} \sigma(Q')
\,\,+\,\sum_{\sbf: Q(\sbf)\subset Q}\sigma\big(Q(\sbf)\big)\,\leq\, C_{\eta,K}\, \sigma(Q)\,,
\\[4pt] \forall Q\in \dd(E)\,.
\end{multline}
Given a family $\F:=\{Q_j\}\subset \dd(E)$ of
pairwise disjoint cubes, we recall that the ``discrete  sawtooth" $\dd_\F$ is 
the collection of all cubes in $\dd(E)$ that are not contained in any $Q_j\in\F$ (cf. \eqref{eq2.discretesawtooth1}),
and we define the ``restriction of $\mut$ to the sawtooth $\dd_\F$'' by
\begin{equation}\label{eq4.4x}
\mut_\F(\dd'):=\mut(\dd'\cap\dd_\F)= \sum_{Q\in\dd'\setminus (\cup_{\F} \,\dd_{Q_j})}\alpha_{Q}.
\end{equation}

We shall use the method of ``extrapolation of Carleson measures'' in the following form.

\begin{lemma}\label{lemma:extrapol}
Let $\sigma$ 
be a non-negative,
dyadically doubling Borel measure on $E$,
and let $\mut$ be a discrete Carleson measure with respect to $\sigma$, 
i.e., there exist non-negative coefficients $\alpha_Q$  so that $\mut$ is defined as in \eqref{eq4.1},
and a constant $M_0<\infty$, with
\begin{equation}\label{eq4.7a}
\|\mut\|_{\C}:= \sup_{Q\in\dd(E)}\frac{\mut(\dd_Q)}{\sigma(Q)}\,\le\, M_0\,.
\end{equation}
Let $\mutt$ be another non-negative measure on $\dd(E)$ as in \eqref{eq4.1}, say
\begin{equation}\label{eq4.8a}
\mutt(\dd'):=\sum_{Q\in\dd'}\beta_Q\,,\qquad \beta_Q\geq 0\,,\quad \forall\, \dd' \subset \dd(E),
\end{equation}
where for some uniform constant $M_1$, and for each cube $Q$,
\begin{equation}\label{eq4.8aa}
\beta_Q\,\leq\, M_1 \,\sigma(Q)\,.
\end{equation}
Suppose that there is a positive constant $\gamma$ 
such that for every $Q\in \dd(E)$ and every family of pairwise disjoint
dyadic subcubes  $\F=\{Q_j\}\subset \dd_{Q}$
verifying
\begin{equation}\label{eq4.9a}
\|\mut_{\mathcal{F}}\|_{\mathcal{C}(Q)}:= \sup_{Q'\in\dd_Q}\frac{\mut\left(\dd_{Q'}
\setminus (\cup_{\F} \,\dd_{Q_j})\right)}{\sigma(Q')}
\le\gamma\, ,
\end{equation}
we have that $\mutt_\F$ (defined as in \eqref{eq4.4x}, but with coefficients $\beta_Q$) satisfies
\begin{equation}\label{eq4.7}
\mutt_\F(\dd_Q)\,\leq\, M_1\,\sigma(Q)\,.
\end{equation}
Then $\mutt$ is a discrete Carleson measure, with
\begin{equation}\label{eq4.8}
\|\mutt\|_{\C}:= \sup_{Q\in\dd(E)}\frac{\mutt(\dd_Q)}{\sigma(Q)}\le M_2\,,
\end{equation}
for some $M_2<\infty $ depending on $n,M_0,M_1,\gamma$ and the doubling constant of $\sigma$.
\end{lemma}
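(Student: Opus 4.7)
The plan is to use the standard method of extrapolation of Carleson measures, a bootstrapping argument driven by a stopping-time decomposition governed by the discrete Carleson measure $\mut$. For each $Q_0\in\dd(E)$ I will produce a pairwise disjoint family $\F_0=\F_0(Q_0)\subset\dd_{Q_0}$ such that (i) the sawtooth norm satisfies $\|\mut_{\F_0}\|_{\C(Q_0)}\leq \gamma$, so that the hypothesis \eqref{eq4.7} applies and yields $\mutt_{\F_0}(\dd_{Q_0})\leq M_1\,\sigma(Q_0)$, and (ii) the stopping cubes satisfy a packing bound $\sum_j\sigma(Q_j)\leq \theta\,\sigma(Q_0)$ with $\theta<1$ depending only on $M_0$ and $\gamma$. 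These two items feed a bootstrapping identity that upgrades an a priori finite bound on $\|\mutt\|_{\C}$ to the desired estimate \eqref{eq4.8}.

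To secure a starting finite bound, the first step will be a truncation reduction: restrict $\mutt$ to cubes of side length at least $2^{-N}\ell(Q_0)$, obtaining a measure $\mutt^{(N)}$ with finite mass on every $\dd_Q$; thanks to \eqref{eq4.8aa} this produces a crude $N$-dependent bound for $A^{(N)}:=\sup_{Q}\mutt^{(N)}(\dd_Q)/\sigma(Q)$, so it suffices to prove an $N$-independent estimate $A^{(N)}\leq M_2$ and then let $N\to\infty$ by monotone convergence.

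With $Q_0$ fixed, the family $\F_0(Q_0)$ will be taken as the collection of maximal strict subcubes $Q_j\subsetneq Q_0$ for which $\mut(\dd_{Q_j})/\sigma(Q_j)>\gamma$. By maximality, any $Q'\in\dd_{Q_0}$ with $Q'\neq Q_0$ and $Q'\not\subseteq Q_j$ for all $j$ satisfies $\mut(\dd_{Q'})/\sigma(Q')\leq \gamma$, and the pointwise inequality $\mut_{\F_0}\leq \mut$ gives $\mut_{\F_0}(\dd_{Q'})/\sigma(Q')\leq\gamma$; the remaining case $Q'=Q_0$ is handled directly when $\mut(\dd_{Q_0})/\sigma(Q_0)\leq\gamma$, and otherwise by first descending to the children of $Q_0$ and running the same argument on each, using the dyadic doubling of $\sigma$ to recombine the bounds. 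The Carleson bound $\|\mut\|_{\C}\leq M_0$ then produces the packing
\begin{equation*}
\sum_j\sigma(Q_j)\,\leq\,\gamma^{-1}\sum_j\mut(\dd_{Q_j})\,\leq\,\gamma^{-1}\mut(\dd_{Q_0})\,\leq\,(M_0/\gamma)\,\sigma(Q_0)\,.
\end{equation*}

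Combining (i) with the packing and the disjointness of $\{Q_j\}$ leads to
\begin{equation*}
\mutt^{(N)}(\dd_{Q_0})=\mutt^{(N)}_{\F_0}(\dd_{Q_0})+\sum_j\mutt^{(N)}(\dd_{Q_j})\leq M_1\,\sigma(Q_0)+A^{(N)}\sum_j\sigma(Q_j),
\end{equation*}
and taking the supremum over $Q_0$ yields $A^{(N)}\leq M_1+\theta\,A^{(N)}$, where $\theta$ is the effective packing constant obtained from iterating the above corona construction generation by generation. The dyadic doubling of $\sigma$ forces $\theta<1$ (depending on $M_0$, $\gamma$, and the doubling constant), so that $A^{(N)}\leq M_1/(1-\theta)=:M_2$ uniformly in $N$. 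The chief technical obstacle is the calibration of the stopping threshold against the sawtooth norm: on cubes where $\mut$ itself has density exceeding $\gamma$ at the top scale, one cannot force the sawtooth norm of $\mut_{\F_0}$ below $\gamma$ without including $Q_0$ in $\F_0$, which trivializes the conclusion. This is bypassed by iterating the stopping-time construction down through successive generations until the $\mut$-mass at the top of each sub-sawtooth drops below $\gamma$, and then telescoping the contributions via the geometric decay of the packing.
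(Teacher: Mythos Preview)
Your overall strategy is the right one --- this is indeed the extrapolation argument --- but the proof as written has a genuine gap at its most important step: the claim that the effective packing constant $\theta$ is strictly less than $1$.

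Your stopping-time rule selects maximal strict subcubes $Q_j\subsetneq Q_0$ with $\mut(\dd_{Q_j})/\sigma(Q_j)>\gamma$, and then you deduce $\sum_j\sigma(Q_j)\leq (M_0/\gamma)\,\sigma(Q_0)$. But $M_0/\gamma$ is typically much larger than $1$ (in applications $\gamma$ is small and $M_0$ is large), so this packing bound does \emph{not} close the bootstrap inequality $A^{(N)}\leq M_1+\theta A^{(N)}$. You acknowledge this obstruction in your last paragraph and assert that it is ``bypassed by iterating the stopping-time construction down through successive generations'' to produce ``geometric decay of the packing'', but you never carry this out, and a naive iteration of the same stopping rule only compounds the problem: each generation multiplies the total measure of the stopping family by the same factor $M_0/\gamma>1$. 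There is also the related difficulty you noted: at a cube $Q_0$ with $\alpha_{Q_0}>\gamma\,\sigma(Q_0)$, no sawtooth $\F_0\subset\dd_{Q_0}\setminus\{Q_0\}$ can force $\|\mut_{\F_0}\|_{\C(Q_0)}\leq\gamma$, since $\mut_{\F_0}(\dd_{Q_0})\geq\alpha_{Q_0}$. Descending to the children and invoking doubling does not by itself resolve this, because the same obstruction can recur at every scale.

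The paper circumvents this via a different organization. Rather than a single stopping time with geometric decay, it runs an \emph{induction on the level $a$ of the Carleson norm}: one formulates an induction hypothesis $H(a)$ (roughly, that for every $Q$ with $\mut(\dd_Q)\leq a\,\sigma(Q)$ one can find an ample family $\{P_k\}$ with the sawtooth $\mutt$-mass controlled), proves $H(0)$ directly from the hypothesis with $\F=\emptyset$, and then shows $H(a)\implies H(a+b)$ for a fixed increment $b=\gamma/C$. The key tool for the induction step is a refined corona lemma (Lemma~\ref{lemma:Corona}) which produces a family $\F$ satisfying $\|\mut_{\F}\|_{\C(Q)}\leq\gamma$ \emph{and} the quantitative estimate $\sigma(B)\leq\frac{a+b}{a+2b}\,\sigma(Q)$ for the ``bad'' subfamily of $\F$; it is this explicit factor $\frac{a+b}{a+2b}<1$ that provides the needed gain, not a direct packing of all of $\F$. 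After finitely many steps one reaches $a=M_0$, and a John--Nirenberg argument for Carleson measures converts $H(M_0)$ into \eqref{eq4.8}. Your proposal is missing precisely this mechanism for producing a contraction factor.
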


Let us momentarily take the lemma for granted, and use it to prove Theorem \ref{t1}.
We begin with a preliminary reduction, which reduces matters to working with balls of radius
$r<C \diam(E)$; i.e., we claim that the desired estimate \eqref{eq1.carl*} is equivalent to
\begin{equation}\label{eq1.carl2}
\sup_{y\in E,\, 0<r<  100 \diam(E)} \,\frac1{r^n}\iint_{B(y,r)} |\nabla u(X)|^2 \delta(X) \,dX\,
\leq \,C\, \|u\|^2_\infty\, .
\end{equation}
Of course, if $E$ is unbounded the equivalence is obvious.
Thus, we suppose that $\diam(E)<\infty$, and
that \eqref{eq1.carl2} holds.   Let $u$ be bounded and harmonic  in
$\ree\setminus E$.  We may assume that  $\|u\|_\infty = 1$.
Fix  a ball $B(y,r)$, with $y\in E$, and $r\geq 100\diam(E)$.
Set $r_0:= 10 \diam(E)$.
By \eqref{eq1.carl2},
$$\iint_{B(y,r_0)} |\nabla u(X)|^2 \delta(X) \,dX\,
\leq \,C r_0^n  \leq C r^n\,.$$
Moreover,
\begin{multline*}
\iint_{B(y,r)\setminus B(y,r_0)} |\nabla u(X)|^2 \delta(X) \,dX\\[4pt]
\leq \,\sum_{0\leq k\leq\log_2(r/r_0)} \iint_{2^kr_0\leq|X-y|<2^{k+1}r_0} |\nabla u(X)|^2 \delta(X) \,dX\\[4pt]
 \lesssim \,\sum_{0\leq k\leq\log_2(r/r_0)} \left(2^k r_0\right)^n \,\lesssim \,r^n\,,
\end{multline*}
where in the second inequality we have used Caccioppoli's inequality, the normalization
$\|u\|_\infty=1$, and the fact that
$\delta(X) \approx |X-y|$ in the regime $|X-y|\geq 10 \diam(E)$.
Thus, \eqref{eq1.carl2} implies (and hence is equivalent to) \eqref{eq1.carl*}, as claimed.

We shall apply Lemma \ref{lemma:extrapol} with, as usual, $\sigma :=H^n\big|_E$,
and with $\mut$ as above, with coefficients $\alpha_Q$ defined as in \eqref{eq4.0},
so that \eqref{eq4.7a} holds with $M_0=C_{\eta,K}$, by Lemma \ref{lemma2.1} (2).
For us, $\mutt$ will be a discretized version of the measure
$|\nabla u(X)|^2\delta(X) dx$, where $u$ is bounded and harmonic in
$\Omega:=\ree\setminus E$.  We now claim that
\eqref{eq1.carl*} is equivalent to
the analogous bound
\begin{equation}\label{eq1.carl}
\sup_{Q\in \dd(E)} \,\frac1{\sigma(Q)}\iint_{T_Q} |\nabla u(X)|^2 \delta(X) \,dX\,
\leq \,C\, \|u\|^2_\infty\,.
\end{equation}
That \eqref{eq1.carl*} implies \eqref{eq1.carl} is obvious by \eqref{eq3.3aab}. 
The converse implication reduces to showing that
\eqref{eq1.carl} implies \eqref{eq1.carl2}, since,
as noted above, the latter estimate is equivalent to \eqref{eq1.carl*}.
We proceed as follows.   Fix  a ball $B(x,r)$, with $x\in E$, and $r< 100\diam(E)$.
We choose a collection
of dyadic cubes $\{Q_k\}_{k=1}^N$, with $\ell(Q_k)\approx Mr$
(unless $r>\diam(E)/M$, in which case our collection is comprised of only one cube, namely
$Q_1=E$), where
$M$ is a large fixed number to be chosen, such that
$$B(x,10r)\cap E\subset \bigcup_k Q_k\,.$$
Note that the cardinality $N$ of this collection may be taken to be uniformly bounded.
We claim that $\cup_kT_{Q_k}$ covers $B(x,r)\setminus E$, in which case it follows
immediately that
\eqref{eq1.carl} implies \eqref{eq1.carl2}.
Let us now prove the claim.
Given $Y\in B(x,r)\setminus E$,  there is
a Whitney box $I\in\W$ containing $Y$, so that
$$\ell(I)\approx \delta(Y)\leq |x-Y|<r\,.$$  Let $\hat{y}\in E$ satisfy
$|Y-\hat{y}|=\delta(Y)$, and choose $Q\in\dd(E)$ containing $\hat{y}$ so that
$\ell(Q)=\ell(I)$ (unless $\diam(I) \approx \diam(E)$, in which case we just set $Q=E$).
Note also that $\dist(I,Q)\approx \ell(Q)$ with harmless constants, so that $I\in \W_Q^0\subset \W_Q$.
Thus, $Y\in U_Q$ (cf. \eqref{eq3.3aa}).  Moreover,
by the triangle inequality, $\hat{y}\in B(x,2r)\cap E$, whence it follows
(for $M$ chosen large enough) that
$Q$ is contained in one of the cubes $Q_k$ chosen above,
call it $Q_{k_0}$.
Consequently, $Y\in T_{Q_{k_0}}$ (cf. \eqref{eq3.3a}).  This proves the claim.
Therefore, it is enough to prove \eqref{eq1.carl}.

To the latter end, we discretize \eqref{eq1.carl} as follows.
By normalizing, we may assume without loss of generality that $\|u\|_\infty = 1$.
We fix a small $\tau\in(0,\tau_0/2)$, 
and set $U_Q:= U_{Q,\tau}$, $T_Q:=T_{Q,\tau}$ as in
\eqref{eq3.3aa} and \eqref{eq3.3a}.
We now set
\begin{equation}\label{eq4.15a}
\beta_Q := \dint_{U_Q}|\nabla u(X)|^2 \,\delta(X) \,dX \,,\end{equation}
and define $\mutt$ as in \eqref{eq4.8a}.  We note that \eqref{eq4.8aa} holds
by Caccioppoli's inequality (applied in each of the fattened Whitney boxes comprising $U_Q$),  and
the definition of $U_Q$ and the ADR property of $E$.
Moreover, the Whitney
regions $U_Q$ have the bounded overlap property:
\begin{equation}\label{eq4.16}\sum_{Q\in\dd} 1_{U_Q}(X) \leq C_{n, \,ADR}\,.\end{equation}
Consequently, for every
pairwise disjoint family $\F\subset\dd(E)$, and every $Q\in\dd_\F$, we have
\begin{equation}\label{eq4.15}
\mutt_\F(\dd_Q)
\approx \dint_{\Omega_{\F,Q}}|\nabla u(X)|^2 \,\delta(X) \,dX
\end{equation}
where we recall that  (see \eqref{eq2.sawtooth1}) 
$$\Omega_{\F,Q}:= {\rm int}\left(\bigcup_{Q'\in\dd_Q\cap\dd_\F} U_{Q'}\right)\,.$$
In particular, taking $\F=\emptyset$, in which case $\dd_{\F}=\dd(E)$,
and thus $\Omega_{\F,Q}=T_{Q}$, we obtain that \eqref{eq1.carl} holds
if and only if $\mutt$ satisfies the  discrete Carleson measure estimate
\eqref{eq4.8}.  

For each
$Q\in\G$, we set $\widehat{U}_Q^{\pm} := U_{Q,2\tau}^{\pm}$, as in \eqref{eq3.3b},
and for each stopping time regime $\sbf\subset\G$,
we define the corresponding NTA subdomains $\Omega_\sbf^\pm=\Omega_\sbf^\pm(2\tau)$ as in
\eqref{eq3.2} (with $\sbf'=\sbf$).  
Let $\mut$ be the discrete Carleson measure defined in \eqref{eq4.0}-\eqref{eq4.1}.
Our goal is to verify the hypotheses of Lemma \ref{lemma:extrapol}.  We have already
observed that \eqref{eq4.8aa} holds, therefore, we
need to show that \eqref{eq4.9a} implies \eqref{eq4.7}, or more precisely,
that given a cube $Q\in\dd(E)$ and a pairwise disjoint family $\F\subset \dd_Q$, 
for which \eqref{eq4.9a} holds with suitably small $\gamma$, we may deduce
\eqref{eq4.7}.

Let us therefore suppose that \eqref{eq4.9a} holds for some $\F$, and some $Q$, and
we disregard the trivial case $\F=\{Q\}$.
By definition of $\mut$, and of $\mut_\F$ (cf. \eqref{eq4.0}-\eqref{eq4.4x}),
if $\gamma$ is sufficiently small,
then $\dd_Q\cap \dd_\F$ does not contain any $Q'\in \M\cup\B$
(recall that $\M:=\{Q(\sbf)\}_{\sbf}$ is the collection of the maximal cubes of the various stopping time regimes).  Thus,
{\it every} $Q'\in \dd_Q\cap\dd_\F$ belongs to $\G$, and moreover,
all such $Q'$ belong to the {\it same} stopping time regime  $\sbf$, since $Q\in \dd_Q \cap \dd_\F$ unless $\F=Q$, the case that we excluded above.
Consequently, $\Omega_{\F,Q}$, and more precisely,
each $U_{Q'}$, with $Q'\in \dd_Q\cap\dd_\F$, splits into two pieces, call them
$\Omega_{\F,Q}^\pm$, and $U_{Q'}^\pm$, contained in $\Omega_\sbf^\pm.$
For $Q'\in \dd_Q\cap\dd_\F$,
we make the corresponding splitting of $\beta_{Q'}$ 
into $\beta_{Q'}^\pm$ 
so that
\begin{equation}\label{eq4.18}
\beta^\pm_{Q'} := \dint_{U^\pm_{Q'}}|\nabla u(X)|^2 \,\delta(X) \,dX \,,\end{equation}
and for  $\dd'\subset \dd_Q$, we set
$$\mutt_\F^\pm(\dd'):= \sum_{Q'\in\dd'\cap\dd_\F}\beta_{Q'}^\pm\,.$$
For the sake of specificity, we shall consider $\Omega_{\F,Q}^+$, and
observe that $\Omega_{\F,Q}^-$
may be treated by exactly the same arguments.

Since we have constructed $U_Q$ with parameter $\tau$, and $\widehat{U}_Q^{\pm}$ with parameter
$2\tau$, for $X\in \Omega_{\F,Q}^+$, we have that
$$\delta(X)\approx \delta_*(X)\,,$$
where $ \delta_*(X):=\dist (X,\pom_\sbf^+)$, and where the
implicit constants depend on $\tau$.
Consequently (cf. \eqref{eq4.15}),
\begin{equation*}
\mutt^+_\F(\dd_Q)
\approx \dint_{\Omega^+_{\F,Q}}|\nabla u(X)|^2 \, \delta_*(X) \,dX\,.
\end{equation*}
As above, set $B^{*}_Q:= B(x_Q, K\ell(Q))$.  
Note that
$\Omega^+_{\F,Q}\subset B^{*}_Q\cap\Omega_\sbf^+$, by construction of $\Omega^+_{\F,Q}$  and \eqref{eq3.3aab}. 
Thus, one can find a ball $B^{**}_Q$ centered at $\partial\Omega^+_\sbf$ and with radius of the order of $\ell(Q)$, such that  
\begin{equation}\label{eqCME-NTA}
\mutt^+_\F(\dd_Q)
\leq\dint_{B^{**}_Q\cap\Omega^+_\sbf}|\nabla u(X)|^2 \, \delta_*(X) \,dX\,\lesssim 
\, \sigma(Q)\,,
\end{equation}
where in the last step we have used that $\Omega_\sbf^+$ is an NTA domain with ADR boundary,
and is therefore known to satisfy such Carleson measure estimates
(recall that we have normalized so that $\|u\|_\infty =1$).    Indeed, by \cite{DJe},
for any NTA domain with ADR boundary,
harmonic measure belongs to $A_\infty$ with respect to surface measure $\sigma$ on the boundary,
and therefore one obtains Carleson measure estimates for bounded solutions by
\cite{DJK}.  Since a similar bound holds for $\mutt^-_\F(\dd_Q)$, we obtain \eqref{eq4.7}.
Invoking Lemma \ref{lemma:extrapol}, we obtain \eqref{eq4.8}, and thus equivalently,
as noted above, \eqref{eq1.carl}.

It remains to prove Lemma \ref{lemma:extrapol}.
To this end, we shall require the following result from
\cite{HM-I}.  

\begin{lemma}[{\cite[Lemma 7.2]{HM-I}}]\label{lemma:Corona}
Suppose that $E$ is ADR.  Fix $Q\in \dd(E)$
and $\mut$ as above.  Let $a\geq 0$ and $b>0$, and suppose that
$\mut(\dd_{Q})\leq (a+b)\,\sigma(Q).$
Then there is a family $\F=\{Q_j\}\subset\dd_{Q}$
of pairwise disjoint cubes, and a constant $C$ depending only on dimension
and the ADR constant such that
\begin{equation} \label{Corona-sawtooth}
\|\mut_\F\|_{\C(Q)}
\leq C b,
\end{equation}
\begin{equation}
\label{Corona-bad-cubes}
\sigma(B)
\leq \frac{a+b}{a+2b}\, \sigma(Q)\,,
\end{equation}
where $B$ is the union of those $Q_j\in\F$ such that
$\mut\big(\dd_{Q_j}\setminus \{Q_j\}\big)>a\,\sigma(Q_j)$.
\end{lemma}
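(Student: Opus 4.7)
The plan is to invoke a standard Calder\'on-Zygmund stopping-time construction on the dyadic tree $\dd_Q$ relative to the measure $\mut$; the result appears in this form in \cite[Lemma 7.2]{HM-I}. The construction hinges on selecting a maximal family of subcubes on which $\mut$ concentrates, from which both the packing estimate and the sawtooth estimate will be read off.

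First I would set $\F = \{Q_j\}$ to be the collection of maximal dyadic subcubes $Q_j \subsetneq Q$ subject to the stopping criterion
\begin{equation*}
\mut(\dd_{Q_j}) \,>\, (a + 2b)\,\sigma(Q_j).
\end{equation*}
By maximality, these cubes are pairwise disjoint, and for every $Q' \in \dd_Q$ not contained in any $Q_j$ one obtains the upper density estimate $\mut(\dd_{Q'}) \leq (a+2b)\,\sigma(Q')$.

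The packing estimate \eqref{Corona-bad-cubes} for $B \subseteq \F$ follows directly by summation. Since $B$ consists of pairwise disjoint cubes each satisfying the stopping criterion, and the subtrees $\{\dd_{Q_j}\}_{Q_j \in B}$ are pairwise disjoint subsets of $\dd_Q$,
\begin{equation*}
(a+2b)\,\sigma(B) \,<\, \sum_{Q_j \in B}\mut(\dd_{Q_j}) \,\leq\, \mut(\dd_Q) \,\leq\, (a+b)\,\sigma(Q),
\end{equation*}
which rearranges to the required bound $\sigma(B) < \frac{a+b}{a+2b}\,\sigma(Q)$.

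The main obstacle is the sawtooth Carleson estimate \eqref{Corona-sawtooth}, where one must show $\mut_\F(\dd_{Q'}) \leq Cb\,\sigma(Q')$ with a constant $C$ depending only on dimension and the ADR constant, and crucially \emph{not} on $a$. For $Q' \in \dd_Q$ not contained in any $Q_j$ the identity
\begin{equation*}
\mut_\F(\dd_{Q'}) \,=\, \mut(\dd_{Q'}) \,-\, \sum_{Q_j \subsetneq Q'}\mut(\dd_{Q_j})
\end{equation*}
holds, but the na\"ive density bound $\mut(\dd_{Q'}) \leq (a+2b)\sigma(Q')$ only yields an estimate of order $a+b$. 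The refinement exploits that each $Q_j$ absorbs mass essentially at the threshold $(a+2b)\sigma(Q_j)$, combined with the dyadic doubling of $\sigma$ from the ADR hypothesis, to isolate the ``$a$-baseline'' from the sum and leave only an $O(b)$ remainder in the sawtooth. This delicate separation of the $a$- and $b$-contributions is the technical heart of the argument and is carried out in detail in \cite{HM-I}.
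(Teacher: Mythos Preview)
The paper does not actually prove this lemma: it simply cites \cite[Lemma~7.2]{HM-I} and remarks that the argument there requires only a dyadic cube structure and a dyadically doubling $\sigma$. Your proposal goes further than the paper by sketching the stopping-time construction (maximal subcubes with $\mut(\dd_{Q_j})>(a+2b)\sigma(Q_j)$), deriving \eqref{Corona-bad-cubes} correctly from disjointness and the global hypothesis, and then, like the paper, deferring the delicate $O(b)$ sawtooth bound \eqref{Corona-sawtooth} to \cite{HM-I}. This is consistent with the standard proof and with the paper's own treatment; there is no discrepancy to flag.
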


We refer the reader to \cite[Lemma 7.2]{HM-I} for the proof.   We remark that the lemma is stated
in \cite{HM-I} with $E=\pom$, the boundary of a connected domain, but the proof actually requires only that
$E$ have a dyadic cube structure, and that $\sigma$ be a
non-negative, dyadically doubling Borel measure on $E$.

\begin{proof}[Proof of Lemma \ref{lemma:extrapol}]
The proof proceeds by induction, following \cite{LM},
\cite{AHLT}, \cite{AHMTT}, \cite{HM-I}.
The induction hypothesis, which we formulate for any $a\geq 0$, is as follows:
\\[.3cm]
\null\hskip.1cm \fbox{\rule[8pt]{0pt}{0pt}$H(a)$}\hskip8pt \fbox{\ \parbox[c]{.85\textwidth}{%
%
\rule[10pt]{0pt}{0pt}\it There exist $\eta_a\in(0,1)$ and $C_a<\infty$ such that, for every $Q\in \dd(E)$ satisfying  $\mut(\dd_Q)\le a\,\sigma(Q)$, there is a pairwise disjoint
family $\{P_k\}\subset \dd_Q$, with
\begin{equation}\label{eq4.19}
\sigma \big(Q\setminus (\cup_kP_k)\big)\,\geq\,\eta_a\,\sigma(Q)\,,\qquad\qquad\null
\end{equation}
such that
\begin{equation}\label{eq4.20}
\mutt\left(\dd_Q\setminus (\cup_k \dd_{P_k})\right)\, \leq \,C_a\, \sigma(Q)\,.\qquad\qquad\null
\end{equation}}\ }

\

It suffices to show that $H(a)$ holds with $a=M_0$.  Indeed, once this is done, then
invoking \eqref{eq4.7a},
we will obtain that  there are constants $\eta_a = \eta(M_0)$
and $C_a=C(M_0)$, such that for every $Q\in\dd(E)$,
there is a family $\{P_k\}\subset \dd_Q$
as above for which \eqref{eq4.19} and \eqref{eq4.20} hold.
We may then invoke a standard John-Nirenberg lemma for Carleson measures (whose proof iterates these estimates and sums a geometric series)
to conclude that  \eqref{eq4.8} holds, as desired.

In turn, to obtain $H(M_0)$, we proceed in two steps.

\noindent{\bf Step 1}:  establish $H(0)$.

\noindent{\bf Step 2}: show that there is a constant $b>0$, depending only upon
the specified parameters in the hypotheses of Lemma \ref{lemma:extrapol},
such that $H(a)$ implies $H(a+b)$.

Once steps 1 and 2 have been accomplished, we then obtain $H(M_0)$ by iterating
Step 2 roughly $M_0/b$ times.

\medskip

\noindent{\bf Proof of Step 1: $H(0)$ holds.}
If $\mut(\dd_Q)=0$ then \eqref{eq4.9a} holds, with $\F=\emptyset$, and for $\gamma$ as small as
we like.
Thus, by hypothesis, we have that  
\eqref{eq4.7} holds, with $\mutt_\F=\mutt$ (since in this case $\F$ is vacuous).
Hence, \eqref{eq4.19}-\eqref{eq4.20} hold, with $\{P_k\}=\emptyset,\, \eta_0=1/2$,
and $C_0 = M_1$.

\medskip

\noindent{\bf Proof of Step 2: $H(a) \implies H(a+b)$}  Suppose that $a\geq 0$ and that $H(a)$ holds.  We set $b:=\gamma/C$, where
$\gamma$ is specified in \eqref{eq4.9a}, and $C$ is the constant in \eqref{Corona-sawtooth}.
Fix a cube $Q$ such that $\mut(\dd_Q)\leq (a+b)\,\sigma(Q)$.  We then apply Lemma
\ref{lemma:Corona} to construct a family $\F$ with the stated properties.
In particular, by our choice of $b$, \eqref{Corona-sawtooth} becomes \eqref{eq4.9a}.

We may suppose that
$a<M_0$, otherwise we are done.  Thus
$$\frac{a+b}{a+2b} \leq \frac{M_0+b}{M_0+2b}=:\theta<1\,.$$
Define $\eta:= 1-\theta$.
We set $A:= Q\setminus (\cup_\F Q_j)$, and let $G:= (\cup_\F Q_j)\setminus B$.
Then, \eqref{Corona-bad-cubes} gives 
\begin{equation} \label{eq4.21}
\sigma (A\cup G)\geq\eta\,\sigma(Q)\,.
\end{equation}

We consider two cases.

\noindent{\bf Case 1}:  $\sigma (A)\geq (\eta/2)\,\sigma (Q).$
In this case, we take $\{P_k\} := \F$, so that \eqref{eq4.19} holds with
$\eta_{a+b} =\eta/2$.  Moreover, since  \eqref{eq4.9a} holds by our choice of $b$,
we obtain by hypothesis that \eqref{eq4.7} holds.   The latter is equivalent to \eqref{eq4.20},
since $\F=\{P_k\}$, with $C_{a+b} = M_1$.    Thus, $H(a+b)$ holds in Case 1.

\noindent{\bf Case 2}:  $\sigma (A)< (\eta/2)\,\sigma (Q).$  In this case, by \eqref{eq4.21},
we have that
\begin{equation}\label{eq4.22}
\sigma(G) \geq (\eta/2)\sigma(Q)\,.
\end{equation}
By definition, $G$ is the union of cubes in the subcollection $\F_{\rm good}\subset \F$,
defined by
$$\F_{\rm good}:=\left\{Q_j\in\F: \,
\mut(\dd_{Q_j}\setminus \{Q_j\})\leq a\,\sigma(Q_j)\right\}\,.$$
For future reference, we set $\F_{\rm bad}:=\F\setminus \F_{\rm good}$.
We note that by pigeon-holing, each $Q_j\in\F_{\rm good}$ has at least one dyadic child,
call it $Q_j'$, such that
$$\mut(\dd_{Q'_j})\leq a\,\sigma(Q'_j)$$
(if there is more than one such child, we simply pick one).  Thus, we may invoke the induction hypothesis
$H(a)$, to obtain that for each such $Q_j'$,
there exists a pairwise disjoint
family $\{P^j_k\}\subset \dd_{Q_j'}$, with
\begin{equation}\label{eq4.23}
\sigma \big(Q_j'\setminus (\cup_kP^j_k)\big)\,\geq\,\eta_a\,\sigma(Q_j')\,
\gtrsim \eta_a\,\sigma(Q_j)\,
\end{equation}
(where in the last step we have used that
$\sigma$ is dyadically doubling),
such that
\begin{equation}\label{eq4.24}
\mutt\left(\dd_{Q_j'}\setminus (\cup_k \dd_{P^j_k})\right)\, \leq \,C_a\, \sigma(Q_j')\,.
\end{equation}

Given $Q_j\in\F_{\rm good}$, we define $\F_j'$ to be the collection of
all the dyadic brothers of $Q_j'$; i.e., $\F_j'$ is comprised of
all the dyadic children of
$Q_j$, except $Q_j'$.  We then define a collection $\{P_k\} \subset \dd_Q$ by
$$\{P_k\}:= \F_{\rm bad} \,\cup\, \left(\cup_{Q_j\in\F_{\rm good}}\F_j'\right)
\,\cup \,\left(\cup_{Q_j\in \F_{\rm good}} \{P_k^j\}\right)\,.$$
We note that \eqref{eq4.19} holds for this collection $\{P_k\}$, with
$\eta_{a+b}\gtrsim \eta_a \,\eta/2$,
by \eqref{eq4.22} and \eqref{eq4.23}:
\begin{align*}
\sigma\big(\cup_k P_k\big)
&=
\sigma(B)
+
\sum_{Q_j\in \F_{\rm good}} \sigma(Q_j\setminus Q_j')
+
\sum_{Q_j\in \F_{\rm good}} \sigma\big(\cup_k P_k^j\big)
\\
&=
\sigma(B)+\sigma(G)-\sum_{Q_j\in \F_{\rm good}} \sigma\big(Q_j'\setminus\cup_k P_k^j\big)
\\
&\le
\sigma(Q)-c\,\eta_a \,\sigma(G)
\\
&\le
\sigma(Q)-c\,\eta_a\,\frac{\eta}{2} \,\sigma(Q)
\end{align*}
It remains only to verify \eqref{eq4.20}.
To this end, we write
\begin{multline*}
\mutt\left(\dd_Q\setminus(\cup_k\dd_{P_k})\right) \\ =\,
\mutt\left(\dd_Q\setminus(\cup_\F\dd_{Q_j}\right) \,+\,
\sum_{Q_j\in\F_{\rm good}} \left(\mutt(\{Q_j\})
+\mutt\left(\dd_{Q_j'}\setminus(\cup_k\dd_{P_k^j}\right)\right)\\
= \mutt_\F(\dd_Q) + \sum_{Q_j\in\F_{\rm good}} \left(\beta_{Q_j}
+\mutt\left(\dd_{Q_j'}\setminus(\cup_k\dd_{P_k^j}\right)\right)
\\ \lesssim \,\sigma(Q) \,+ \sum_{Q_j\in\F_{\rm good}}  \sigma (Q_j) \,\lesssim\, \sigma(Q)\,,
\end{multline*}
where in third line we have used the definitions of $\mutt_\F$ (cf. \eqref{eq4.4x})
and of $\mutt$, and in the last line we have used \eqref{eq4.7}, \eqref{eq4.8aa},
and \eqref{eq4.24}, along with the pairwise disjointness of the cubes in $\F$.
\end{proof}

\begin{remark}\label{remark4.28} We note that, in fact, the proof of Theorem \ref{t1} did not require
harmonicity of $u$ {\it per se}.  Indeed, a careful examination of the preceding argument reveals that
we have only used the following three properties of $u$: 1) $u\in L^\infty(\Omega)$;
2) $u$ satisfies Caccioppoli's inequality in $\Omega$;  3) $u$ satisfies Carleson measure
estimates in every NTA sub-domain of $\Omega$ with ADR boundary.  
\end{remark}

\section{$\eps$-approximability:  proof of Theorem \ref{t2}}\label{s-eps-approx}
In this section we give the proof of Theorem \ref{t2}.
Our approach here combines the technology of the present paper
(in particular, the bilateral Corona decomposition
of Lemma \ref{lemma2.1}), 
with the original argument of \cite{G}, and its extensions
in \cite{D} and \cite{HKMP}).  Moreover, we shall invoke Theorem \ref{t1} at certain points in the argument.

The first (and main) step in our proof will be to establish a dyadic version, i.e., given
$u$ harmonic and bounded in $\om:=\ree\setminus E$, with $\|u\|_\infty\leq 1$, and given
$\eps\in (0,1)$ and $Q\in\dd(E)$,
we shall construct $\vp:=\vp^\eps_Q$, defined on the ``Carleson tent" $T_Q$, such that
$\|u-\vp\|_{L^\infty(T_Q)}<\eps$, and
\begin{equation}\label{eq6.carldyadic}
\sup_{Q'\subset Q} \frac1{|Q'|}\iint_{T_{Q'}}|\nabla \vp|\,\lesssim\eps^{-2}\,.
\end{equation}
Once we have established \eqref{eq6.carldyadic}, it will then be relatively easy
to construct $\vp$, globally defined on $\om$, and
satisfying properties \eqref{eq1.4} and \eqref{eq1.5} of Definition \ref{def1.3}.

We begin by refining the bilateral Corona decomposition of Lemma \ref{lemma2.1}.
We fix $\eta \ll1$ and $K\gg1$,  and 
we make the constructions of Lemma \ref{lemma2.1}, corresponding to this choice of
$\eta$ and $K$.  We also fix $\eps\in (0,1)$, and a parameter $\tau\in (0,\tau_0/10)$.
For each $Q\in \dd(E)$, we form the Whitney regions
$U_Q=U_{Q,\tau}$ as above, and
we split each
$U_Q$ into its various connected components
$U_Q^i$.  

Let $u$ be a bounded harmonic function in $\Omega=\ree\setminus E$, with
$\|u\|_{L^\infty(\om)}\leq 1$.   We say that $U_Q^i$ is a ``red component" if
\begin{equation}\label{eq7.1}
\osc_{U_Q^{i}}  u: = \max_{Y\in U_Q^{i}} u(Y) - \min_{Y\in U_Q^{i}} u(Y) > \frac{\eps}{10}\,,
\end{equation}
otherwise we say that $U_Q^i$ is a ``blue component".
We also say that $Q\in \dd(E)$ is a ``red cube" if its associated Whitney region $U_Q$
has at least one red component,
otherwise, if $\osc_{U_Q^i} u \leq \eps/10$ for every  connected component
$U_Q^i,\, 1\leq i\leq N$, then we say that $Q$ is a ``blue cube".
\begin{remark}\label{r6.2}
The number $N=N(Q)$
of components $U_Q^i$ is uniformly bounded, depending only on $\eta, K$ and dimension,
since each component $U_Q^i$
contains a fattened Whitney box $I^*$ with $\ell(I) \approx \ell(Q)$, and since all such $I^*$ satisfy
$\dist(I^*,Q) \lesssim \ell(Q)$.  Of course, as noted above (cf. \eqref{eq3.3b}), if $Q\in\G$, then
$U_Q$ has precisely two components $U^\pm_Q$.
\end{remark}

We now refine the stopping time regimes as follows. Given
$\sbf\subset \G$ as constructed in
Lemma \ref{lemma2.1}, set $Q^0:= Q(\sbf)$, and let  $G_0=G_0(\sbf):=\{Q^0\}$ be the
 ``zeroeth generation".
We subdivide $Q^0$
dyadically, and stop the first time that
we reach a cube $Q\subset Q^0$ for which at least one of the
following holds:
\begin{enumerate}
\item $Q$ is not in $\sbf$.
\smallskip
\item $|u(Y_Q^+) - u(Y^+_{Q^0})| > \eps/10$.
\smallskip
\item $|u(Y_Q^-) - u(Y^-_{Q^0})| > \eps/10$.
\end{enumerate}
(where we recall that $Y_Q^\pm$ is the ``modified center" of the Whitney region $U^\pm_Q$;
see Definition \ref{d3.2} and Remark \ref{r3.11a}).

Let $\F_1=\F_1(Q^0)$ denote the maximal sub-cubes of $Q^0$ extracted by this stopping time procedure, and
note that the collection of all  $Q\subset Q^0$ that are not contained in any $Q_j\in \F_1$,
forms a semi-coherent (cf. Definition \ref{d3.11}) subregime of $\sbf$, call it $\sbf'=\sbf'(Q^0)$,
with maximal element
$Q(\sbf'):= Q^0$.  Clearly, the maximality of the cubes in $\F_1$ implies that every $Q\in\sbf'$
belongs to $\sbf$,
and moreover
\begin{equation}\label{eq7.2}
\max\left(|u(Y_{Q}^+) - u(Y^+_{Q^0})|,|u(Y_{Q}^-) - u(Y^-_{Q^0})|\right) \leq \eps/10\,,\quad \forall Q\in\sbf'\,.
\end{equation}
Let $G_1=G_1(Q^0):= \F_1\cap\sbf$ denote the first generation cubes.
We observe that $G_1$ may be empty, since $\F_1$ may not contain any cubes belonging to $\sbf$.
In this case, we simply have $\sbf'(Q^0) =\sbf$.
On the other hand, if $G_1$ is non-empty, then for each $Q^1\in G_1(Q^0)$,
we repeat the stopping time construction above (with $Q^1$ in place of $Q^0$), except that in
criteria (2) and (3) we replace $Y_{Q^0}^\pm$ by $Y_{Q^1}^\pm$ (criterion (1) is unchanged, so
we continue to work only with cubes belonging to $\sbf$).
For each $Q^1\in G_1(Q^0)$, we may then define first generation cubes $G_1(Q^1)$ in the same way,
and thus, we may define recursively
$$
G_2(Q^0):= \bigcup_{Q^1\in G_1(Q^0)} G_1(Q^1)\,,
$$
and in general (modifying the stopping time criteria (2) and (3) {\it mutatis mutandi})
$$G_{k+1}(Q^0):= \bigcup_{Q^{k}\in G_{k}(Q^0)}G_1(Q^{k})\,,\quad k\geq 0\,,$$
where the case $k=0$ is a tautology, since $G_0(Q^0) :=\{Q^0\}$, and where the set of indices $\{k\}_{k\geq 0}$ may be finite or
infinite.
In addition, bearing in mind that $Q^0=Q(\sbf)$, we shall sometimes find it convenient
to emphasize the dependence on $\sbf$, so with slight abuse of notation we write
$$G_k(\sbf):=  G_k(Q^0) = G_k(Q(\sbf))\,,\quad k\geq 0\,.$$
We also set
$$G(\sbf):= \bigcup_{k\geq 0} G_k(\sbf)\,, \qquad G^*:= \bigcup_{\sbf} G(\sbf)\,,$$
to denote, respectively, the set of generation cubes in $\sbf$, and the collection of
all generation cubes.

\begin{remark}\label{r7.3}  We record some observations concerning the ``generation cubes":
Given $\sbf$ as in Lemma \ref{lemma2.1}, our construction produces a decomposition
of $\sbf$ into disjoint subcollections
$$\sbf = 
 \bigcup_{Q \in G(\sbf)}\sbf'\left(Q\right)\,,$$
where 
each $\sbf'(Q)$ is a semi-coherent
subregime of $\sbf$ with maximal element $Q$. 
Moreover,
\begin{equation}\label{eq7.4}
\max\left(|u(Y_{Q'}^+) - u(Y^+_{Q})|,|u(Y_{Q'}^-) - u(Y^-_{Q})|\right) \leq \eps/10\,,\quad \forall Q'\in\sbf'(Q)\,.
\end{equation}
\end{remark}

Next, we establish packing conditions for the red cubes, 
and for the generation cubes.  We consider first the red cubes.
Our goal is to prove that for all $Q_0\in\dd(E)$
\begin{equation}\label{eq7.5}
\sum_{Q\subset Q_0:\, Q \,\, {\rm is\, red}}\sigma (Q) \leq \,C \eps^{-2} \,\sigma(Q_0)\,,
\end{equation}
where $C$ depends upon $\eta,K,\tau,n$ and the ADR/UR constants of $E$.
To this end, let $Q$ be any red cube, let $U_Q=U_{Q,\tau}$ be its associated Whitney region,
and let $\widehat{U}_Q:= U_{Q,2\tau}$ 
be a fattened version of $U_Q$. 
Note that
$ \ell(Q)^{n+1} \approx |U_Q|\approx |\widehat{U}_Q|$, 
and similarly for each connected component 
of the Whitney regions.
By definition, if $Q$ is red, then
$U_Q$ has at least one red component
$U_Q^{i}$, and every red $U^i_Q$ satisfies
\begin{equation}\label{eq7.6} \eps^2 
\,\lesssim \,\left(\osc_{U_Q^{i}} u \right)^2
\,\lesssim\,\ell(Q)^{1-n} \iint_{\widehat{U}_Q^{i}} |\nabla u|^2
\,\lesssim\, \ell(Q)^{-n} \iint_{\widehat{U}_Q} |\nabla u(Y)|^2 \delta(Y) \, dY\,,
\end{equation}
where we have used  \eqref{eq7.1}, local boundedness estimates of Moser type, Poincar\'e's inequality, and the fact
that $\delta(Y) \approx \ell(Q)$ in $\widehat{U}_Q$.   We leave the details to the reader
(or cf. \cite[Section 4]{HM-I}),
but we remark that the key fact is that the Harnack Chain condition holds in each 
component $U^i_Q$.
Here, the various implicit constants may depend upon $\tau, \eta$ and $K$.
By the ADR property, \eqref{eq7.6} implies that
\begin{multline*}
\sum_{Q\subset Q_0:\, Q \,\, {\rm is\, red}}\sigma(Q)\,\lesssim\, \eps^{-2}
\sum_{Q\subset Q_0} \iint_{\widehat{U}_Q} |\nabla u(Y)|^2 \delta(Y) \, dY\\[4pt]
\lesssim \,\eps^{-2}
 \iint_{B^*_{Q_0}} |\nabla u(Y)|^2 \delta(Y) \, dY\, \lesssim\, \eps^{-2}
 \sigma(Q_0)\,,
 \end{multline*}
where in the second inequality we have used that the Whitney regions $\widehat{U}_Q$
have the bounded overlap property, 
and for $Q\subset Q_0$,
are contained in $B^*_{Q_0}:= B(x_{Q_0},K\ell(Q))$ by \eqref{eq3.3aab};
the third inequality is Theorem \ref{t1}, since $\|u\|_\infty\leq 1$.

We now augment the ``bad" collection $\B$ from Lemma \ref{lemma2.1} by setting
\begin{equation}\label{eqBt}\Bt:=\B \cup \{Q\in \dd(E):\, Q \,\,{\rm is\,red}\}\,.
\end{equation}
Since 
the collection $\B$ is already endowed with a packing condition,
estimate \eqref{eq7.5} immediately improves to the following
\begin{equation}\label{eq7.7}
\sum_{Q\subset Q_0:\, Q \in \Bt}\sigma (Q) \leq C \eps^{-2}\,\sigma(Q_0)\,,
\end{equation}
where again $C=C(\eta,K,\tau,n,{\rm ADR/UR})$.

Let us now turn to the packing condition for the generation cubes.
We first establish the following.
\begin{lemma}\label{lemma7.8} Let $\sbf$ be one of the stopping time regimes of Lemma
\ref{lemma2.1}, and for $k\geq 0$, let $Q^k\in G_k(\sbf)$ be a generation cube.
Then
$$\sum_{Q\in G_1(Q^k)} \sigma(Q) \leq \,C\eps^{-2}
\iint_{\Omega_{\sbf'(Q^k)}}|\nabla u(Y)|^2 \delta(Y)\, dY\,, $$
where $\sbf'(Q^k)$ is the semi-coherent subregime with maximal element $Q^k$ (cf. Remark \ref{r7.3}),
$\Omega_{\sbf'(Q^k)}$ is the associated ``sawtooth" domain (cf. Remark \ref{r3.11}), and $C$
depends on $\eta,K,\tau,n$, and the ADR/UR constants for $E$.
\end{lemma}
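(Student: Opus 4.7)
The plan is to split $G_1(Q^k)=G_1^+(Q^k)\cup G_1^-(Q^k)$ according to whether stopping criterion (2) or (3) is the one violated at $Q$, and handle the ``$+$'' case (the ``$-$'' case being symmetric). For $Q\in G_1^+(Q^k)$ the stopping rule gives $|u(Y_Q^+)-u(Y_{Q^k}^+)|^2>\eps^2/100$, so by Chebyshev it suffices to establish
\[
\sum_{Q\in G_1^+(Q^k)} |u(Y_Q^+)-u(Y_{Q^k}^+)|^2\,\sigma(Q) \;\lesssim\; \iint_{\Omega^+_{\sbf'(Q^k)}}|\nabla u|^2\,\delta\,dY.
\]

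Next I would telescope: since $\sbf'(Q^k)$ is semi-coherent, the unique chain of dyadic ancestors $Q^k=R_0\supset R_1\supset\cdots\supset R_m=\widetilde Q$ all lies in $\sbf'(Q^k)$ for each $Q\in G_1^+(Q^k)$, and using $Y_Q^+=X_{\widetilde Q}^+$ and $Y_{Q^k}^+=X_{Q^k}^+$,
\[
u(Y_Q^+)-u(Y_{Q^k}^+) \,=\, \sum_{j=1}^{m}\bigl(u(X_{R_j}^+)-u(X_{R_{j-1}}^+)\bigr).
\]
Remark 3.11a places both $X_{R_j}^+$ and $Y_{R_j}^+=X_{R_{j-1}}^+$ inside $U_{R_j}^+$ at distance $\approx\ell(R_j)$ from $\partial\Omega^+_{\sbf'(Q^k)}$, and they are joined by a Harnack chain of bounded length. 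Setting $\gamma_R:=\iint_{\widehat U_R^+}|\nabla u|^2\,\delta\,dY$, subharmonicity of $|\nabla u|^2$ together with $\delta\approx\ell(R_j)$ on $\widehat U_{R_j}^+$ produces the standard per-step oscillation bound
\[
|u(X_{R_j}^+)-u(X_{R_{j-1}}^+)|^2 \;\lesssim\; \ell(R_j)^{-n}\,\gamma_{R_j}.
\]

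The core of the argument is to convert this telescoped per-step bound into the desired $L^2$-type sum over $Q$ without losing a factor polynomial in the depth $m(Q)$. Here I would exploit the structure supplied by Lemma~\ref{lemma3.15}, that $\Omega^+_{\sbf'(Q^k)}$ is NTA with ADR boundary: by \cite{DJe}, harmonic measure on $\partial\Omega^+_{\sbf'(Q^k)}$ is $A_\infty$ with respect to surface measure, so $u(X_R^+)$ approximates the $\sigma$-average of the non-tangential boundary trace of $u$ over the corresponding surface ball $\Delta_R^*\subset\partial\Omega^+_{\sbf'(Q^k)}$, making $\{u(X_R^+)\}_{R\in\sbf'(Q^k)}$ an approximate martingale on the dyadic tree. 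The dyadic Littlewood--Paley $L^2$ identity for this quasi-martingale, together with
\[
\sum_{R\in\sbf'(Q^k),\,R\neq Q^k}|u(X_R^+)-u(X_{R^*}^+)|^2\sigma(R) \;\lesssim\; \sum_R\gamma_R \;\lesssim\; \iint_{\Omega^+_{\sbf'(Q^k)}}|\nabla u|^2\,\delta\,dY
\]
(the first inequality being the per-step estimate with bounded overlap of the regions $\widehat U_R^+$), then delivers the desired bound. Summing the analogous bound on $G_1^-(Q^k)$ completes the proof.

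The principal obstacle is the quasi-martingale $L^2$ transfer step. A naive weighted Cauchy--Schwarz along the telescope with weights $w_j=2^{-nj}$ or any polynomial analog introduces an unabsorbable factor, essentially because after interchanging the sums over $Q$ and $R$ the inner count $\#\{Q\in G_1^+:\widetilde Q\subseteq R\}$ grows exponentially with descent depth and cannot be tamed by $\sum\sigma(Q)\leq\sigma(R)$ alone. Using the $A_\infty$ approximation from \cite{DJe} to replace $u(X_R^+)$ by an actual dyadic $\sigma$-average on $\partial\Omega^+_{\sbf'(Q^k)}$ converts the question into the sharp dyadic martingale identity on a space of homogeneous type, which avoids the loss. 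The use of the slightly larger fattening parameter $2\tau$ in the definition of $\widehat U_R^+$ (cf.\ \eqref{eq3.3aa-fat}) is precisely what ensures $\delta\approx\dist(\cdot,\partial\Omega^+_{\sbf'(Q^k)})$ on these regions, so the resulting integral can be written with respect to the distance $\delta$ to the original UR set $E$.
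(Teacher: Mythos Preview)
Your initial setup matches the paper: split $G_1(Q^k)=G_1^+\cup G_1^-$ by which stopping criterion fires, reduce to bounding $\sum_{Q\in G_1^+}|u(Y_Q^+)-u(Y_{Q^k}^+)|^2\sigma(Q)$, and recognize that the NTA/ADR structure of $\Omega':=\Omega^+_{\sbf'(Q^k)}$ together with \cite{DJe} and \cite{DJK} must be the engine. But your execution diverges from the paper precisely at the step you flag as ``the principal obstacle'', and the fix you propose is where the gap lies.

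The ``quasi-martingale $L^2$ transfer step'' is not a standard result, and as written it does not go through. The identity you want,
\[
\sum_{Q\in G_1^+}|u(X^+_{\widetilde Q})-u(X^+_{Q^k})|^2\sigma(Q)\ \lesssim\ \sum_{R\in\sbf'(Q^k),\,R\neq Q^k}|u(X^+_R)-u(X^+_{R^*})|^2\sigma(R),
\]
holds for a genuine dyadic $\sigma$-martingale by orthogonality of increments, but $R\mapsto u(X_R^+)$ is not a $\sigma$-martingale: it is the harmonic-measure average $\int u\,d\omega^{X_R^+}$, and $A_\infty$ only tells you that $d\omega^{X_R^+}/d\sigma$ lies in some $RH_p$, not that it is constant on a boundary ball. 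Replacing $u(X_R^+)$ by $\fint_{\Delta_R^*}u\,dH^n$ produces pointwise errors that you would then have to control \emph{at every scale along every telescoped chain}, and there is no evident mechanism that prevents these errors from accumulating in exactly the way you are trying to avoid. Furthermore, the putative ``dyadic grid'' $\{\Delta_R^*\}_{R\in\sbf'(Q^k)}$ of surface balls on $\partial\Omega'$ is not a dyadic structure on $\partial\Omega'$ in any useful sense; the cubes $R$ are dyadic on $E$, not on $\partial\Omega'$. So both the approximation and the underlying combinatorics needed for a Littlewood--Paley identity are missing.

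The paper sidesteps the telescoping issue entirely. Since $Y_Q^+\in U_{\widetilde Q}^+\subset\Omega'$ with $\dist(Y_Q^+,\partial\Omega')\approx\ell(Q)$, one picks a nearby boundary point $z_Q^+\in\partial\Omega'$ and a ball $B'_Q=B(z_Q^+,\ell(Q))$; choosing the cone aperture $\kappa$ large enough forces $Y_Q^+\in\Gamma_{\Omega'}(z)$ for every $z\in B'_Q\cap\partial\Omega'$, whence
\[
|u(Y_Q^+)-u(Y_{Q^k}^+)|\ \le\ N_*^{\Omega'}\!\big(u-u(Y_{Q^k}^+)\big)(z),\qquad z\in B'_Q\cap\partial\Omega'.
\]
After a Vitali-type sparsification to make the $B'_Q$ disjoint (at the cost of a bounded constant), summing and using the ADR property of $\partial\Omega'$ gives
\[
\eps^2\sum_{Q\in G_1^+}\sigma(Q)\ \lesssim\ \int_{\partial\Omega'}\Big(N_*^{\Omega'}\big(u-u(Y_{Q^k}^+)\big)\Big)^2\,dH^n,
\]
and now the $N\lesssim S$ bound of \cite{DJK} (available because $\omega\in A_\infty$ by \cite{DJe}) closes the estimate immediately. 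No telescoping, no martingale approximation, and the sole use of $A_\infty$ is packaged inside the $N/S$ theorem. You should replace your telescoping argument by this non-tangential maximal function step.
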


To prove the lemma, we shall need to introduce the non-tangential maximal function.  Given a domain
$\Omega'\subset \ree$, and $u\in C(\Omega')$, for $x\in \pom'$, set
$$N^{\Omega'}_*u(x):= \sup_{Y\in \Gamma_{\Omega'}(x)} |u(Y)|\,,$$
where for some $\kappa>0$,
\begin{equation}\label{eq7.9}
\Gamma_{\Omega'}(x):= \left\{Y\in \Omega': |Y-x|\leq (1+\kappa) \dist(Y,\pom')\right\}\,.
\end{equation}

\begin{proof}[Proof of Lemma \ref{lemma7.8}]
Let $Q \in G_1(Q^k)$, so in particular, $Q\in G_{k+1}(\sbf)$,
and let
$\Qt$ be the dyadic parent of $Q$.
We note that
$\Qt \in\sbf'(Q^k)$,
by maximality of the generation cubes (more precisely, by maximality of the stopping time family
$\F_1(Q^k)$ that contains $G_1(Q^k)$).
By the stopping time
construction, since $Q$ belongs to $\sbf$, we must have
$$\max\left(|u(Y_{Q}^+) - u(Y^+_{Q^k})|,|u(Y_{Q}^-) - u(Y^-_{Q^k})|\right) > \eps/10\,.$$
Let $G_1^+,\, G_1^-$ denote the subcollections of $G_1(Q^k)$ for which the
previous estimate holds with ``$+$", and with ``$-$", respectively (if both hold, then we arbitrarily assign $Q$ to $G_1^+$).
For the sake of specificity, we treat $G^+_1$;  the argument for $G_1^-$ is the same.
For every $Q\in G^+_1$, we have
\begin{equation}\label{eq7.10}
\frac{\eps^2}{100}\,\leq \, 
|u(Y_Q^+)-u(Y^+_{Q^k})|^2\,.
\end{equation}
To simplify notation, we set $\Omega':= \Omega^+_{\sbf'(Q_k)}$.
By construction (cf. Definition \ref{d3.2} and Remarks \ref{r3.11a} and \ref{r3.11}), since $\Qt\in\sbf'(Q^k)$,
we have that $Y^+_Q\in \interior U_{\Qt}^+\subset \Omega'$, and
$$\ell(Q) \lesssim \dist(Y_Q^+,\partial U_{\Qt}^+)\leq\dist(Y_Q^+,\partial\Omega')
\leq \delta(Y_Q^+) \approx \dist(Y^+_Q,Q) \lesssim \ell(Q)\,,$$
with implicit constants possibly depending on $\eta$ and $K$.
Consequently, there is a point $z_Q^+\in \partial\Omega'$, with
$|z_Q^+-Y_Q^+|\approx\ell(Q)\approx |x_Q-Y_Q^+|$, where as usual $x_Q$ is the ``center" of $Q$.
For each $Q\in G_1^+$, we set $B'_Q:= B(z^+_Q,\ell(Q))$, $B''_Q:= B(x_Q,M\ell(Q))$,
and we fix $M$  large enough (possibly depending on $\eta$ and $K$),
that  $B'_Q\subset B''_Q$.  By a standard covering lemma argument, we can extract
a subset of $G^+_1$, call it $G_1^{++}$, such that $B''_{Q_1}$ and $B''_{Q_2}$ are disjoint,
hence also $B'_{Q_1}$ and $B'_{Q_2}$ are disjoint,
for any pair of cubes $Q_1,\,Q_2\in G_1^{++}$, and moreover,
\begin{equation}\label{eq7.11}
\sum_{Q\in G_1^{+}}\sigma(Q)\leq C_M \sum_{Q\in G_1^{++}}\sigma(Q)=
C_{\eta,K} \sum_{Q\in G_1^{++}}\sigma(Q)\,.
\end{equation}
We may now fix the parameter $\kappa$ large enough in \eqref{eq7.9},
so that $Y_Q^+\in \Gamma_{\Omega'}(z)$,
for all $z\in B'_Q\cap\pom'$. 
Combining \eqref{eq7.10} and \eqref{eq7.11}, 
we then obtain
\begin{multline*}\eps^{2}\sum_{Q\in G_1^+} \sigma(Q)
\,\lesssim\, \eps^{2}   \sum_{Q\in G_1^{++}}\sigma(Q)\\[4pt]
\lesssim\,\sum_{Q\in G_1^{++}}  
 |u(Y_Q^+)-u(Y^+_{Q^k})|^2\,\sigma(Q) 
\\[4pt]
= \sum_{Q\in G_1^{++}}  |u(Y_Q^+)-u(Y^+_{Q^k})|^2\, \sigma(Q)
\, \fint_{B'_Q\cap \partial  \Omega'}
dH^n \\[4pt]
\lesssim \,  \sum_{Q\in G_1^{++}}\int_{B'_Q\cap \partial  \Omega'}
\left(N_*^{\Omega'} \left(u-u(Y^+_{Q^k})\right)\right)^2 dH^n \lesssim \int_{\partial  \Omega'}
\left(N_*^{\Omega'} \left(u-u(Y^+_{Q^k})\right)\right)^2 dH^n\,,
\end{multline*}
where in the last two inequalities, we have used that $\pom'$ is ADR (by Lemma \ref{lemma3.15}),
and that the balls $B'_Q$ are disjoint, for $Q\in G^{++}_1$.
The implicit constants depend on $\eta$ and $K$.
Now, by Lemma \ref{lemma3.15},  $\Omega'$ is NTA with an ADR boundary, and therefore
harmonic measure for $\Omega'=\Omega^+_{\sbf'(Q^k)}$ is $A_\infty$ with respect to surface measure
on $\pom'$, by \cite{DJe}.  Consequently, by \cite{DJK}, we have
\begin{equation}\label{eq7.11.2}\eps^2\sum_{Q\in G_1^+} \sigma(Q) \lesssim\int_{\partial \Omega^+_{\sbf'(Q^k)}}\!\!
\left(N_*^{\Omega'} (u-u(Y^+_{Q^k}))\right)^2 \lesssim
\iint_{\Omega^+_{\sbf'(Q^k)}}|\nabla u(Y)|^2 \delta(Y)\, dY\,.
\end{equation}
Combining the latter estimate with its analogue for $G_1^-$ and $\Omega^-_{\sbf'(Q^k)}$, we obtain the conclusion of the lemma.
\end{proof}
We are now ready to establish the packing property of the generation cubes.  Recall that $G^*$ denotes the collection
of all generation cubes, running over all the stopping time regimes $\sbf$ constructed in Lemma \ref{lemma2.1}.
\begin{lemma}\label{lemma7.12} Let $Q_0\in\dd(E)$.
Then
\begin{equation}\label{eq7.13}
\sum_{Q\subset Q_0:\, Q\in G^*} \sigma(Q) \,\leq\, C\eps^{-2}\sigma(Q_0)\,.
\end{equation}
\end{lemma}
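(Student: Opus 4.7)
The plan is to organize the cubes $Q\in G^*$ with $Q\subset Q_0$ according to whether they are a zeroth--generation cube $Q(\sbf)$ or arise as a child cube in $G_1(Q')$ for some parent generation cube $Q'\in G^*$, and then to exploit Lemma \ref{lemma7.8} together with the Carleson estimate of Theorem \ref{t1} (already proven), which is legitimately available at this point.

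First, I would separate off the zeroth--generation contribution
\[
\sum_{\sbf:\,Q(\sbf)\subset Q_0}\sigma(Q(\sbf))\,\lesssim\,\sigma(Q_0),
\]
which is immediate from the Carleson packing condition for $\{Q(\sbf)\}_{\sbf}$ guaranteed by Lemma \ref{lemma2.1}(2). For every remaining $Q\in G^*$ with $Q\subset Q_0$, there is a unique stopping time regime $\sbf$ and a unique parent generation cube $Q'\in G_{k-1}(\sbf)$ such that $Q\in G_1(Q')$. Thus
\[
\sum_{\substack{Q\in G^*\\ Q\subset Q_0,\ k(Q)\ge 1}}\sigma(Q)
\,=\,\sum_{Q'\in G^*}\sum_{\substack{Q\in G_1(Q')\\ Q\subset Q_0}}\sigma(Q),
\]
and I split this according to whether $Q'\subset Q_0$ or $Q'\supsetneq Q_0$ (the remaining case $Q'\cap Q_0=\emptyset$ contributes nothing since $Q\subset Q'$).

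The main term is the sum over $Q'\subset Q_0$. For each such $Q'$, Lemma \ref{lemma7.8} yields
\[
\sum_{Q\in G_1(Q')}\sigma(Q)\,\le\,C\eps^{-2}\iint_{\Omega_{\sbf'(Q')}}|\nabla u(Y)|^2\,\delta(Y)\,dY.
\]
By Remark \ref{r7.3}, the sub--regimes $\{\sbf'(Q')\}_{Q'\in G(\sbf)}$ partition $\sbf$, so the associated sawtooth regions are built from pairwise disjoint families $\{U_Q^\pm\}$; combining this with the bounded overlap property of the Whitney regions $U_Q$ (equation \eqref{eq4.16}), and with the fact that the $\sbf$'s themselves partition $\G$, one obtains the bounded overlap
\[
\sum_{Q'\in G^*,\ Q'\subset Q_0}\mathbf{1}_{\Omega_{\sbf'(Q')}}(Y)\,\lesssim\,1.
\]
Since every $U_Q$ with $Q\subset Q_0$ is contained in $B(x_{Q_0},CK\ell(Q_0))$ by the Whitney construction (cf.\ \eqref{eq3.3aab}), summing the Lemma \ref{lemma7.8} bound over $Q'\in G^*$ with $Q'\subset Q_0$ and invoking Theorem \ref{t1} (with $\|u\|_\infty\le 1$) yields
\[
\sum_{\substack{Q'\in G^*\\ Q'\subset Q_0}}\sum_{Q\in G_1(Q')}\sigma(Q)
\,\lesssim\,\eps^{-2}\iint_{B(x_{Q_0},CK\ell(Q_0))}|\nabla u|^2\delta\,dY
\,\lesssim\,\eps^{-2}\,\sigma(Q_0).
\]

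For the remaining case $Q'\supsetneq Q_0$, only one stopping time regime $\sbf$ can contribute (the unique one, if any, with $Q(\sbf)\supseteq Q_0$, thanks to disjointness of the maximal cubes $\{Q(\sbf)\}$). Within that $\sbf$, the $Q'\in G(\sbf)$ satisfying $Q'\supsetneq Q_0$ form a linear chain of ancestors $Q(\sbf)=Q^0\supsetneq Q^1\supsetneq\cdots\supsetneq Q^m\supseteq Q_0$, and for each $k<m$ the child of $Q^{k-1}$ in the chain already contains $Q_0$, so any \emph{other} child of $Q^{k-1}$ is disjoint from $Q_0$ and contributes nothing. Hence the whole case $Q'\supsetneq Q_0$ collapses to a single disjoint family of subcubes of $Q_0$, giving a trivial bound $\le\sigma(Q_0)$. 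The main obstacle is really just the bookkeeping of this last case, together with making precise the bounded overlap of the sawtooth regions $\{\Omega_{\sbf'(Q')}\}_{Q'\in G^*}$; once these are in hand, the desired estimate \eqref{eq7.13} follows by combining all three contributions.
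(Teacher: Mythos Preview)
Your argument is essentially correct and parallels the paper's: both proofs rest on Lemma \ref{lemma7.8}, the bounded overlap of the sawtooths $\Omega_{\sbf'(Q')}$ (via the disjointness of the subregimes $\sbf'(Q')$ and the bounded overlap of the Whitney regions $U_Q$), and Theorem \ref{t1}. The paper first reduces to a maximal generation cube $Q_1\subset Q_0$ (thereby avoiding the ancestor case entirely), then splits by stopping time regime; you instead peel off the zeroth generation via the packing of $\{Q(\sbf)\}$, sum the Lemma \ref{lemma7.8} bound over all $Q'\in G^*$ with $Q'\subset Q_0$ in one shot, and treat the ancestor case $Q'\supsetneq Q_0$ by a direct chain argument. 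Both routes are fine and equally short.

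One correction is needed in your ancestor case. The maximal cubes $\{Q(\sbf)\}$ are \emph{not} pairwise disjoint (they merely satisfy a Carleson packing condition), so your stated justification is wrong. The right reason only one regime contributes is coherency: if $Q'\in G(\sbf)$ with $Q'\supsetneq Q_0$ and there exists $Q\in G_1(Q')$ with $Q\subset Q_0$, then $Q\in\sbf$, $Q\subset Q_0\subset Q'\subset Q(\sbf)$, and coherency of $\sbf$ forces $Q_0\in\sbf$; since the regimes partition $\G$, there is at most one such $\sbf$ (and none if $Q_0\in\B$). With this fix, your chain argument goes through: for each $R_i\supsetneq Q_0$ in the chain with $i<m$, the unique child $R_{i+1}\in G_1(R_i)$ containing $Q_0$ makes all other cubes of $G_1(R_i)$ disjoint from $Q_0$, so only $G_1(R_m)$ contributes, and those cubes are pairwise disjoint subcubes of $Q_0$.
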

\begin{proof}
Fix $Q_0\in\dd(E)$.
Let $M(Q_0)$ be the collection of maximal generation cubes contained in $Q_0$, i.e.,
$Q_1\in M(Q_0)$ if $Q_1\in G^*$, and there is no other $Q'\in G^*$ with
$Q_1\subset Q'\subset Q_0$.  By maximality, the cubes in $M(Q_0)$ are disjoint, so
it is enough to prove \eqref{eq7.13} with $Q_0$ replaced by an arbitrary $Q_1\in M(Q_0)$,
i.e., to show that for any such $Q_1$,
\begin{equation}\label{eq7.14}
\sum_{Q\subset Q_1:\, Q\in G^*} \sigma(Q)  \,\leq\, C\eps^{-2} \sigma(Q_1)\,.
\end{equation}
Since $Q_1$ is a generation cube,  it belongs, by construction, to some $\sbf$, say $\sbf_0$.
Let $\mathfrak{S}=\mfs(Q_1)$ be the collection of all stopping time regimes $\sbf$, excluding
$\sbf_0$, such that $Q(\sbf)$ meets $Q_1$ and $\sbf$ contains at least one subcube of $Q_1$. Then necessarily,
$Q(\sbf)\subsetneq Q_1$, for all $\sbf\in\mfs$.
The left hand side of \eqref{eq7.14} then equals
$$\sum_{Q\subset Q_1:\, Q\in G(\sbf_0)} \sigma(Q)\,+\,\sum_{\sbf\in \mfs}\sum_{Q\in G(\sbf)} \sigma(Q)
\,=: I + II\,. $$
We treat term $I$ first.  We define $G_0(Q_1) = \{Q_1\}$, $G_1(Q_1)$, $G_2(Q_1)$, \dots, etc.,
by analogy to the definitions of $G_k(Q^0)$ above (indeed, this analogy
was implicit in our construction).
We then have
\begin{equation*}
I= \sum_{k\geq 0} \sum_{Q\in G_k(Q_1)} \sigma(Q)
=\sigma(Q_1) + \sum_{k\geq 1}\sum_{Q'\in G_{k-1}(Q_1)} \sum_{Q\in G_1(Q')} \sigma(Q)
=: \sigma(Q_1) + I'\,.
\end{equation*}
By Lemma \ref{lemma7.8},
\begin{multline*}
I' \lesssim \, \eps^{-2}\sum_{k\geq 1}\sum_{Q'\in G_{k-1}(Q_1)}
 \iint_{\Omega_{\sbf'(Q')}}|\nabla u(Y)|^2 \delta(Y)\, dY\\[4pt]
\leq \, \eps^{-2} \sum_{k\geq 1}\sum_{Q'\in G_{k-1}(Q_1)} \sum_{Q\in \sbf'(Q')}
\iint_{U_Q}|\nabla u(Y)|^2 \delta(Y)\, dY\\[4pt]
\lesssim\, \eps^{-2} \iint_{T_{Q_1}}|\nabla u(Y)|^2 \delta(Y)\, dY\,\lesssim \,\eps^{-2} \sigma(Q_1)\,,
\end{multline*}
where in the second inequality we have used the definition of $\Omega_{\sbf'}(Q')$ (cf. Remark \ref{r3.11}), and
in the third inequality that the triple sum runs over a family of distinct cubes, all contained in $Q_1$
(cf. Remark \ref{r7.3}),
and that the Whitney regions $U_Q$ have bounded overlaps;
the last inequality is Theorem \ref{t1}, by virtue of \eqref{eq3.3aab}, since $\|u\|_\infty\leq 1$.  Thus, we have established
\eqref{eq7.14} for term $I$.

Consider now term $II$.  The inner sum in $II$,  for a given $\sbf$,  is
$$\sum_{Q\in G(\sbf)} \sigma(Q) = \sum_{k\geq 0} \sum_{Q\in G_k(\sbf)} \sigma(Q)\,.$$
But by definition, $G_k(\sbf) = G_k(Q(\sbf))$, so this inner sum is therefore
exactly the same as term $I$ above, but with $Q(\sbf)$ in place of $Q_1$.  Consequently,
we obtain, exactly as for term $I$, that
$$\sum_{Q\in G(\sbf)} \sigma(Q) \,\lesssim \,\eps^{-2} \sigma\big(Q(\sbf)\big)\,.$$
Plugging the latter estimate into term $II$, and using the definition of $\mfs$,
we have
$$II \,\lesssim\, \eps^{-2} \sum_{\sbf:\, Q(\sbf)\subset Q_1}  \sigma\big(Q(\sbf)\big)\,\lesssim \,
\eps^{-2} \sigma(Q_1)\,,$$
by the packing condition for the maximal cubes $Q(\sbf)$, established in Lemma \ref{lemma2.1}.
\end{proof}

Our next task is to define the approximating function $\vp$.  To this end, fix $Q_0\in\dd(E)$.
We shall first define certain auxiliary functions $\vp_0$, $\vp_1$, which we then blend together to get $\vp$.
 We are going to find an ordered family of cubes $\{Q_k\}_{k\ge 1}\in\mathcal{G}$ and to introduce the first cube $Q_1$ let us consider two cases. In the first case we assume that $Q_0\notin \G$ and let $Q_1$ be the subcube of $Q_0$, of largest ``side length", that belongs to $\G$.
By the packing condition for $\B$, there must of course be such a $Q_1$.
It may be that $Q_0$ has more than one proper subcube  in $\G$, all of the same
maximum side length, in this case we just pick one.
Then $Q_1$, being in $\G$, and hence in some $\sbf$,
must therefore
belong to some subregime $\sbf'_1$ (cf. Remark \ref{r7.3}), and in fact $Q_1= Q(\sbf'_1)$
(since the dyadic parent of $Q_1$ belongs to $\dd_{Q_0}\cap\B$).
The second case corresponds to  $Q_0\in \G$. Then, in particular, $Q_0$ belongs to some $\sbf$, and therefore to some $\sbf'_1$,
and again we set $Q_1:= Q(\sbf_1')$.  In this case,
$Q_0$ could be a proper subset of $Q_1$, or else  $Q_1=Q_0$. Once we have constructed $Q_1\in\mathcal{G}$ in the two cases, we then let $Q_2$ denote the subcube of
maximum side length in $(\dd_{Q_0}\cap\G) \setminus \sbf_1'$, etc.,
thus obtaining an enumeration $Q_1$, $Q_2$, \dots$\in\mathcal{G}$ 
such that
$$\ell(Q_1)\geq\ell(Q_2)\geq\ell(Q_3)\geq \dots\,,$$ 
$Q_k =Q(\sbf'_k)$,
and $\G\cap\dd_{Q_0} \subset \cup_{k\geq 1} \sbf'_k$.  The latter property follows easily from the construction, since from one step to the next one, we take a cube with maximal side length in $\mathcal{G}\cap \dd_{Q_0}$ that is not in the previous subregimes. This procedure exhausts the collection of cubes $\mathcal{G}\cap \dd_{Q_0}$. Further, we note that $\G\cap\dd_{Q_0} =\cup_{k\geq 1} \sbf'_k$ when
$Q_1\subset Q_0$.  We point out that, certainly, the various
subregimes $\sbf'_k$ need not all be contained in the same original regime $\sbf$.
We define recursively
$$A_1:= \Omega_{\sbf_1'};\qquad A_k:= \Omega_{\sbf_k'}\setminus
\left(\cup_{j=1}^{k-1} A_j\right)\,, \quad k\geq 2,
$$
so that the sets $A_k$ are pairwise disjoint.
Note that $\cup_{j=1}^{k} A_j = \cup_{j=1}^{k} \Omega_{\sbf'_j}$.
We also set
$$\ot:=\cup_k \Omega_{\sbf_k'} =\cup_k A_k\,, $$
and 
$$
A_1^\pm:=  \Omega^\pm_{\sbf_1'}\,;\qquad A^\pm_k:=  \Omega^\pm_{\sbf_k'}\setminus
\left(\cup_{j=1}^{k-1}A_j\right)\,,\,\,k\geq 2\,,
$$
which induces the corresponding splitting
$\ot=\ot^+\cup\ot^-$, 
where $\ot^\pm:= \bigcup_kA_k^\pm.$
We now define $\vp_0$ on $\ot$ by setting
$$\vp_0:= \sum_k \left(u\big(Y^+_{Q_k}\big)1_{A_k^+}\,+
\,u\big(Y^-_{Q_k}\big)1_{A_k^-}\right)\,.$$ 

Next, let $\{Q(k)\}$ be some fixed enumeration of the cubes in $\Bt\cap\dd_{Q_0}$
(cf. \eqref{eqBt} for the definition of $\B^*$).
We define recursively
$$V_1:= U_{Q(1)}\,;\qquad V_k:= U_{Q(k)}\setminus \left(\cup_{j=1}^{k-1} V_j\right)\,,\quad k\geq 2\,.$$
For each  $Q(k)$, we split the corresponding
Whitney region $U_{Q(k)}$ into its connected components $U_{Q(k)}=\cup_i U_{Q(k)}^i$
(note that the number of such components is uniformly bounded; cf. Remark \ref{r6.2}),
and we observe that this induces a corresponding splitting
$$
V^i_1:= U^i_{Q(1)}\,;\qquad V^i_k:= U^i_{Q(k)}\setminus \left(\cup_{j=1}^{k-1} V_j\right)\,,\quad k\geq 2\,.$$
On each $V^i_k$ we define
\begin{equation*}
\vp_1(Y):=\left\{
\begin{array}{l}
u(Y)\,,
\,\,\, {\rm if}\, U_{Q(k)}^i \,\,{\rm is \, red}
\\[6pt]
u(X_I)\,, \,\, {\rm if}\,  U_{Q(k)}^i \,\,{\rm is \, blue}
\end{array}
\right.\,, \quad Y\in  V^i_k\,,
\end{equation*}
where for each blue component $ U_{Q(k)}^i$ we have specified a
fixed Whitney box $I\subset  U_{Q(k)}^i$,  with center
$X_I$.  
In particular, we have thus defined $\vp_1$ on
\begin{equation}\label{omegaone}
\Omega_1:=\interior\left( \cup_{Q\in\Bt\cap\dd_{Q_0}}U_Q\right)
= \interior\left( \cup_k V_k\right)\,.
\end{equation}
We extend $\vp_0$ and $\vp_1$ to all of $T_{Q_0}$ by
setting each equal to
0 outside of its original domain of definition.
The supports of $\vp_0$ and $\vp_1$ may overlap:
it is possible that a red cube may belong to $\G$ as well as to
$\Bt$, and in any case
the various Whitney regions $U_Q$ may overlap (in a bounded way) for different cubes $Q$.
On the other hand, note that, up to a set of measure 0,  $T_{Q_0} \subset \Omega_0 \cup \Omega_1$
(with equality, again up to a set of measure 0, holding in the case that $Q_1\subset Q_0$).
Finally, we define $\vp$ as a measurable function on 
$T_{Q_0}$ by setting
\begin{equation*}
\vp(Y):=\left\{
\begin{array}{l}
\vp_0(Y)\,,
\,\,\, Y\in T_{Q_0}\setminus \overline{\ott}
\\[6pt]
\vp_1(Y)\,, \,\, Y\in \ott\,.
\end{array}
\right.
\end{equation*}
Then
$\|u-\vp\|_{L^\infty(T_{Q_0})}<\eps$.  Indeed, in $\Omega_1$, $\vp$ is equal either to $u$,
or else to $u(X_I)$, with $X_I$ in some ``blue" component with small oscillation; otherwise,
if $Y\in T_{Q_0}\setminus \overline{\Omega_1}$, then (modulo a set of measure 0),
$Y$ lies in some $A_k^\pm\subset \om_{\sbf'_k}^\pm$, and moreover, $Y$ also lies in some blue
$U_Q^\pm\subset \om_{\sbf'_k}^\pm$, whence it follows that $u(Y)-\vp(Y)=u(Y) -u(Y_{Q_k}^\pm)$ is
small by construction.

It remains to verify the Carleson measure estimate for the measure $|\nabla\vp(Y)|dY$.
We do this initially for $\vp_0$ and $\vp_1$ separately.   Let $Q'\subset Q_0$, and
consider first $\vp_0$.
 We shall require the following:
\begin{lemma}\label{lemma6.intersect}
Fix $Q\in \dd(E)$, and its associated Carleson box $T_Q$.  Let $G(Q)$ be the collection of all
generation cubes $Q'$, with $\ell(Q')\geq\ell(Q)$, such that $\Omega_{\sbf'(Q')}$ meets $T_Q$.  Then there
is a uniform constant $N_0$ such that the cardinality of
$G(Q)$ is bounded by $N_0$.
\end{lemma}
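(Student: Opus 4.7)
The plan is to define an injection from $G(Q)$ into a uniformly finite collection of dyadic cubes on $E$ at scale comparable to $\ell(Q)$ and lying near $Q$; then the ADR property will control the cardinality. The principal obstacle is that $\ell(Q')$ itself is a priori unbounded above, so one cannot count the $Q'$'s directly. The remedy is to replace each $Q'$ by an intermediate ancestor of some cube in $\sbf'(Q')$ at the controlled scale $\approx \ell(Q)$, with injectivity coming for free from the disjointness of the subregimes $\sbf'(R)$.

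Fix $Q'\in G(Q)$. Since $\Omega_{\sbf'(Q')}$ meets $T_Q$, by their definitions \eqref{eq3.saw} and \eqref{eq3.3a} there exist cubes $P\in \sbf'(Q')$ and $P'\in \dd_Q$ and Whitney boxes $I\in \W_P$, $J\in \W_{P'}$ such that $I^*\cap J^*\neq \emptyset$. By \eqref{whitney}, $I$ and $J$ must then share a boundary point, and therefore the Whitney property forces $\ell(I)\approx \ell(J)$. The side-length relations in \eqref{eq3.1} and \eqref{eq2.whitney2} give $\ell(I)\approx \ell(P)$ and $\ell(J)\approx \ell(P')$, with constants depending on $\eta, K$. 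Combined with $\ell(P')\leq \ell(Q)$ this yields $\ell(P)\leq C_1\ell(Q)$ and, via the Whitney distance estimates, $\dist(P,Q)\leq \dist(P,P')\leq C_2\ell(Q)$, where $C_i=C_i(\eta,K,n)$.

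Next, replace $P$ by an ancestor at the right scale. Since $P\subseteq Q'$, $\ell(Q')\geq \ell(Q)$, and $\sbf'(Q')$ is semi-coherent (so every dyadic ancestor of $P$ contained in $Q'$ lies in $\sbf'(Q')$), one defines $\tilde P=\tilde P(Q')$ as the smallest ancestor of $P$ in the dyadic chain from $P$ to $Q'$ satisfying $\ell(\tilde P)\geq \ell(Q)$. A quick dyadic case-check shows $\ell(\tilde P)\in [\ell(Q),C_1\ell(Q)]$: if $\ell(P)\geq \ell(Q)$ then $\tilde P=P$ and $\ell(\tilde P)=\ell(P)\leq C_1\ell(Q)$; if $\ell(P)<\ell(Q)$, the child of $\tilde P$ in the chain has scale $<\ell(Q)$, forcing $\ell(\tilde P)=\ell(Q)$. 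Since $\tilde P\supseteq P$, still $\dist(\tilde P,Q)\leq C_2\ell(Q)$.

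Finally, the assignment $Q'\mapsto \tilde P(Q')$ is injective on $G(Q)$: for distinct $Q_1',Q_2'\in G(Q)$, the subregimes $\sbf'(Q_1')$ and $\sbf'(Q_2')$ are disjoint, either because they sit in distinct stopping-time regimes $\sbf$ (which themselves partition $\G$) or, within the same $\sbf$, because of the disjoint decomposition $\sbf=\bigcup_{R\in G(\sbf)}\sbf'(R)$ recorded in Remark \ref{r7.3}; hence $\tilde P(Q_1')\neq \tilde P(Q_2')$. The image lies in the set of dyadic cubes on $E$ whose side length belongs to $[\ell(Q),C_1\ell(Q)]$ and whose distance to $Q$ is at most $C_2\ell(Q)$, a set of cardinality at most some $N_0=N_0(\eta,K,n,\mathrm{ADR})$ by Ahlfors--David regularity. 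Therefore $|G(Q)|\leq N_0$, as claimed.
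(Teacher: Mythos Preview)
Your proof is correct and follows essentially the same approach as the paper's: both arguments pick, for each $Q'\in G(Q)$, a cube in $\sbf'(Q')$ at scale $\approx \ell(Q)$ and within distance $\lesssim \ell(Q)$ of $Q$ (using semi-coherency to pass from the small-scale witness $P$ to an ancestor in $\sbf'(Q')$), and then use the pairwise disjointness of the subregimes $\sbf'(\cdot)$ to conclude that the assignment is injective into a uniformly finite collection of dyadic cubes. Your version is slightly more explicit about how $\tilde P$ is selected and why $\ell(\tilde P)\in[\ell(Q),C_1\ell(Q)]$, but the substance is identical.
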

\begin{proof}
Let $Q'\in G^*$, and suppose that  $\ell(Q')\geq\ell(Q)$, and that $\Omega_{\sbf'(Q')}$ meets $T_Q$.
Then there are two cubes $P'\in \sbf'(Q')$, and
$P\subset Q$, such that there is some $I\in \W_{P'}$, and $J\in \W_{P}$, for which $I^*$ meets $J^*$
(of course, it may even be that $I=J$, but not necessarily).  By construction of the collections $\W_Q$,
$$\dist(P',P)\lesssim \ell(P')\approx \ell(I)\approx \ell(J) \approx\ell(P)\leq \ell(Q)\leq\ell(Q')\,.$$
By the semi-coherency of $\sbf'(Q')$, we may then choose $R'\in\sbf'(Q')$
such that $P'\subset R'\subset Q'$,
with $\ell(R')\approx\ell(Q)$.  Note that
$\dist(R',Q) \lesssim \ell(Q)$.
The various implicit constants are of course uniformly controlled, and therefore the number of such
$R'$ is also uniformly controlled.  There exists such an $R'$ for every $Q'\in G(Q)$; moreover,
a given $R'$ can correspond to only one $Q'$, since the regimes $\sbf'$ are pairwise disjoint.  Thus,
the cardinality of $G(Q)$ is uniformly bounded by a number $N_0$ that depends on the ADR constant.
\end{proof}

Suppose now that  
$j<k$, hence
$\ell(Q_j)\geq\ell(Q_k)$.  Since $\Omega_{\sbf'}\subset T_{Q(\sbf')}$ by construction
(cf. Remark \ref{r3.11}),
$\Omega_{\sbf'_j}$ meets $\Omega_{\sbf'_k}$ only if
$\Omega_{\sbf'_j}$ meets $T_{Q_k}$.   By Lemma \ref{lemma6.intersect}, the number of indices $j$
for which this can happen, with $k$ fixed, is bounded by $N_0$.
Consequently, since $\cup_{j=1}^{k-1} A_j = \cup_{j=1}^{k-1} \Omega_{\sbf'_j}$,
it follows that for each $k\geq 2$, there is a subsequence $\{j_1,j_2,\dots,j_{N(k)}\}\subset \{1,2,\dots,k-1\}$,
 with $\sup_k N(k)\leq N_0$, such that
$$A_k = \Omega_{\sbf'_k}\setminus \left(\cup_{i=1}^{N(k)}  \Omega_{\sbf'_{j_i}}\right)\,,$$
and hence,
\begin{equation}\label{eq6.16}
\partial A^\pm_k\, \subset\, \partial \Omega^\pm_{\sbf'_k} \cup
\left(\overline{\Omega^\pm_{\sbf'_k}}\,\cap \,\big(\cup_{i=1}^{N(k)} \partial  \Omega_{\sbf'_{j_i}}\big)\right)\,.
\end{equation}
Observe that by definition of $\vp_0$, in the sense of distributions
$$\nabla\vp_0 =  \sum_k \left(u\big(Y^+_{Q_k}\big)\nabla 1_{A_k^+}\,+
\,u\big(Y^-_{Q_k}\big)\nabla 1_{A_k^-}\right)\,,$$
so that, since $\|u\|_\infty\leq 1$,
\begin{multline*}
\iint_{T_{Q'}} |\nabla\vp_0|\, \leq\, \sum_k \iint_{T_{Q'}}\left( |\nabla 1_{A_k^+}| +|\nabla 1_{A_k^-}|\right)\\[4pt]
\leq\, 
\sum_k H^n(T_{Q'}\cap\partial A^+_k) \,
+\sum_k H^n(T_{Q'}\cap\partial A^-_k)\,=: I^+ + I^-\,.
\end{multline*}
Consider $I^+$, which we split further into
$$I^+ = \sum_{k: Q_k\subset Q'}
H^n(T_{Q'}\cap\partial A^+_k)\,+\sum_{k: Q_k\nsubseteq Q'}
H^n(T_{Q'}\cap\partial A^+_k)=: I^+_1+I^+_2\,.$$
We treat $I^+_1$ first.  Note that by Proposition \ref{prop:Sawtooths-ADR} in Appendix \ref{appa} below,
and \eqref{eq6.16}, $\partial A_k^\pm$ satisfies the upper ADR bound, because it is contained in
the union of a uniformly bounded number of sets with that property.  In addition,
$\partial A_k^\pm\subset \overline{\Omega_{\sbf'_k}}$, which has diameter
$\diam(\Omega_{\sbf'_k})\lesssim \ell(Q_k)$.  Therefore,
$$I_1^+\lesssim  \sum_{k: Q_k\subset Q'}\ell(Q_k)^n \approx  \sum_{k: Q_k\subset Q'}
\sigma(Q_k)\lesssim \eps^{-2} \sigma(Q')\,,$$
by the packing condition \eqref{eq7.13}, since each $Q_k$ is a generation cube.

Next, we consider $I_2^+$.  Recall that $\overline{A_k}\subset \overline{\Omega_{\sbf'_k}}$,
and note that 
\begin{equation}\label{eq6.17a}
T_{Q'} \,\, {\rm meets}\,\, \overline{\Omega_{\sbf'_k}}
\,\implies\, \dist(Q',Q_k) \lesssim \min(\ell(Q'),\ell(Q_k))
\end{equation}
(with implicit constants depending on $\eta$ and $K$).
By Lemma \ref{lemma6.intersect}, the
number of such $Q_k$ with $\ell(Q_k)\geq\ell(Q')$ 
is uniformly bounded
(depending on $\eta,K,$ and the ADR constant).  Moreover, as noted above,
$\partial A_k^\pm$ satisfies the upper ADR bound.  Thus,
$$\sum_{k: Q_k\nsubseteq Q',\, \ell(Q_k)\geq \ell(Q')}
H^n(T_{Q'}\cap\partial A^+_k) \lesssim \left(\diam(T_{Q'})\right)^n\approx \sigma(Q')\,.$$
On the other hand, 
if  
$\ell(Q_k)\leq \ell(Q')$, then by \eqref{eq6.17a}, every relevant
$Q_k$ is contained either in $Q'$, or in some ``neighbor" $Q''$ of $Q'$, of
the same ``side length", with $\dist(Q',Q'')\leq C\ell(Q')$ for some (uniform)
constant $C$.
Since the number of such neighbors $Q''$ is uniformly
bounded, 
the terms in $I_2^+$ with $\ell(Q_k)< \ell(Q')$
may be handled exactly like term $I^+_1$.

The term $I^-$ may be handled just like $I^+$, and therefore, combining our estimates for
$I^\pm$, we obtain
the Carleson measure bound
\begin{equation}\label{eq6.17}
\sup_{Q\subset Q_0} \frac1{|Q|}\iint_{T_Q}|\nabla \vp_0|\,\lesssim\eps^{-2}\,.
\end{equation}

Next, we consider $\vp_1$.  Again let $Q'\subset Q_0$.
Recall that $V_k \subset U_{Q(k)}$, and note that
\begin{equation}\label{eq}
U_{Q(k)}\,\,{\rm  meets}\,\, U_{Q(k')}\,\implies\,\dist(Q(k), Q(k'))\lesssim \ell(Q(k))\approx \ell(Q(k'))\,,
\end{equation}
and thus, for any given $Q(k)$, there are at most a uniformly
bounded number of such $Q(k')$ for which this can happen.  Therefore, since $\cup_{j=1}^k V_j =
\cup_{j=1}^k U_{Q(j)}$, it follows that for each $k\geq 2$, there
is a subsequence $\{j_1,j_2,\dots,j_{N'(k)}\}\subset \{1,2,\dots,k-1\}$,
 with $\sup_k N'(k)\leq N'_0$, such that
$$V_k = U_{Q(k)}\setminus \left(\cup_{i=1}^{N'(k)} U_{Q(j_i)}\right)\,,$$
and hence
\begin{equation*}
\partial V_k \subset \partial U_{Q(k)}\,\cup \left(U_{Q(k)} \cap \big(\cup_{i=1}^{N'(k)} \partial U_{Q(j_i)}\big)\right)\,,
\end{equation*}
where each $Q(j_i)$ has side length comparable to that of $Q(k)$.
Consequently,  by construction of the Whitney regions, $\partial V_k$ is covered by the union of
a uniformly bounded number of faces of fattened Whitney boxes $I^*$, each with $\ell(I^*)\approx \ell(Q(k))$,
so that
\begin{equation}\label{eq6.21}
H^n(\partial V_k) \lesssim \ell(Q(k))^n \approx \sigma(Q(k))\,.
\end{equation}
\begin{remark}\label{r6.22}
Recall that $\supp(\vp_1) \subset \overline{\ott}= \overline{\cup_k V_k}$ (cf. \eqref{omegaone}),
and note that since $V_k\subset U_{Q(k)}$, 
the closure of
a given $V_k$ can meet $T_{Q'}$ only if $\ell(Q(k))\lesssim\ell(Q')$ and
$\dist(Q(k),Q')\lesssim\ell(Q')$, thus, there is a collection $\nn(Q')$, of uniformly bounded cardinality,
comprised of cubes $Q^*$ with
$\ell(Q^*)\approx \ell(Q')$, and $\dist(Q^*,Q') \lesssim \ell(Q')$, such that $Q(k) \subset Q^*$ for some $Q^*\in\nn(Q')$,
whenever $\overline{V_k}$ meets $T_{Q'}$.
Here, the various implicit constants may depend upon $\eta$, $K$ and the ADR bounds.
\end{remark}
Using the notation of the Remark \ref{r6.22}, we then have that
\begin{multline*}
\iint_{T_{Q'}} |\nabla\vp_1|
=\iint_{T_{Q'}\cap\,\ott} |\nabla\vp_1|\\[4pt]
 \leq \sum_{Q^*\in\nn(Q')} \sum_{Q(k)\subset Q^*} \iint_{V_k}|\nabla\vp_1|
=\sum_{Q^*\in\nn(Q')} \sum_{Q(k)\subset Q^*}\sum_i \iint_{V_k^i}|\nabla\vp_1|\,.
\end{multline*}
If $U_{Q(k)}^i$ is a blue component, then, since $\|u\|_\infty \leq 1$,
$$\iint_{V^i_{k}}|\nabla\vp_1|\leq \iint_{V^i_{k}}|\nabla1_{V_{k}^i}|
\leq H^n(\partial V^i_{k}) \leq H^n(\partial V_{k})\lesssim \sigma(Q(k))\,,$$
where in the last step we have used \eqref{eq6.21}. 
Since for all $Q$, the number of components $U_Q^i$ is
uniformly bounded (cf. Remark \ref{r6.2}), we obtain
$$\sum_{Q^*\in\nn(Q')} \sum_{Q(k)\subset Q^*}\sum_{i:  U_{Q(k)}^i\, {\rm blue} }  \iint_{V_k^i}|\nabla\vp_1|
\lesssim \, \sum_{Q^*\in\nn(Q')} \sum_{Q(k)\subset Q^*} \sigma(Q(k))
\lesssim \, \eps^{-2}\sigma(Q')\,, $$
by the packing condition for $\Bt$ (cf. \eqref{eq7.7}, and recall that $\{Q(k)\}$ is an enumeration of
$\Bt\cap \dd_{Q_0}$), and the nature of the cubes $Q^*$ in $\nn(Q')$ along with the ADR property.

On the other hand,
if $U_{Q(k)}^i$ is a red component (cf. \eqref{eq7.1}), then 
by \eqref{eq7.6} and the ADR property,
\begin{equation}\label{eq7.16}
\sigma(Q(k)) \lesssim \eps^{-2}  \iint_{\widehat{U}_{Q(k)}} |\nabla u(Y)|^2 \delta(Y) \, dY\,,
\end{equation}
where $\widehat{U}_{Q(k)}:= U_{Q(k),2\tau}$ is a fattened version of $U_{Q(k)}$.
Consequently, for any red component $U_{Q(k)}^i$, bearing in mind that $\delta(Y) \approx \ell(Q(k))$ in
$U_{Q(k)}$, we have
\begin{multline*}
\iint_{V_k^i}|\nabla \vp_1|\,=\,\iint_{V_k^i}|\nabla u|
\,\lesssim\, \left(\iint_{V_k^i}|\nabla u|^2\right)^{1/2}  \ell(Q(k))^{(n+1)/2}\\[4pt]
\approx\,  \left(\iint_{V_k^i}|\nabla u(Y)|^2\delta(Y) dY \right)^{1/2}  \ell(Q(k))^{n/2}
\lesssim \, \eps^{-1}  \iint_{\widehat{U}_{Q(k)}} |\nabla u(Y)|^2 \delta(Y) \, dY\,,
\end{multline*}
where in the last step we have used \eqref{eq7.16} and the ADR property.
Thus,
\begin{multline*}
\sum_{Q^*\in\nn(Q')} \sum_{Q(k)\subset Q^*}\sum_{ i: U_{Q(k)}^i\, {\rm red} }  \iint_{V_k^i}|\nabla\vp_1|
\\[4pt]
\lesssim\,\eps^{-1} \sum_{Q^*\in\nn(Q')} \sum_{Q(k)\subset Q^*}
\iint_{\widehat{U}_{Q(k)}} |\nabla u(Y)|^2 \delta(Y) \, dY
\\[4pt]
\lesssim\,\eps^{-1} \iint_{B^*_{Q'}} |\nabla u(Y)|^2 \delta(Y) \, dY\, \lesssim
\,\eps^{-1} \sigma(Q')\,,
\end{multline*}
where $B_{Q'}^*:= B(x_{Q'},K\ell(Q'))$,
and in the last two steps we have used the bounded overlap property of
the Whitney regions $\widehat{U}_{Q}$,
the nature of $\nn(Q')$,
and Theorem \ref{t1}. 
Combining these estimates,
we obtain the Carleson measure bound
\begin{equation}\label{eq7.17}
\sup_{Q\subset Q_0} \frac1{|Q|}\iint_{T_Q}|\nabla \vp_1|\,\lesssim\eps^{-2}\,.
\end{equation}

Finally, we consider $\vp$.  By definition, in the sense of distributions,
$$\nabla \vp = \big(\nabla \vp_0 \big)1_{T_{Q_0}\setminus\overline{\ott}} \,\,
+\, \big(\nabla \vp_1 \big)1_{\ott}\, + \, J\,,$$
where $J$ accounts for the jump across $\partial\ott$.  The
contributions of the first two terms on the right hand side
may be treated by \eqref{eq6.17} and \eqref{eq7.17}, respectively.
To handle the term $J$, note that $\vp$ has a uniformly bounded jump
across $\pom_1$, since $\|u\|_\infty\leq 1$, and note also that we need only account for
the jump across $\pom_1$ in the interior of $T_{Q_0}$, thus, across the boundary of some $V_k$.
Note also that  $\partial V_k$ meets $T_Q$ only if $Q(k)\subset Q^*$, for some $Q^*\in\nn(Q)$
(see Remark \ref{r6.22}).
Hence, for $Q\subset Q_0$, we have
\begin{multline*}\iint_{T_Q}|J| \lesssim H^n (T_Q\cap \pom_1)\leq
\sum_k H^n(T_Q\cap \partial V_k)\\[4pt] \leq \sum_{Q^*\in\nn(Q)} \sum_{Q(k)\subset Q^*} H^n(\partial V_k)
\lesssim  \sum_{Q^*\in\nn(Q)} \sum_{Q(k)\subset Q^*} \sigma(Q(k)) \lesssim \eps^{-2} \sigma(Q)\,,
\end{multline*}
where in the last two steps we have used \eqref{eq6.21},
the packing condition for $\Bt$ (cf. \eqref{eq7.7}), and the nature of the cubes $Q^*$ in $\nn(Q)$
along with the ADR property.  Since $Q_0\in\dd(E)$ was arbitrary, we have therefore established
the existence of $\vp=\vp^\eps_Q$, satisfying $\|u-\vp\|_{L^\infty(T_Q)}<\eps$ and
\eqref{eq6.carldyadic}, for every $Q$.

The next step is to construct, for each $x\in E$ and each ball $B=B(x,r)$, and
for every $\eps\in (0,1)$, an appropriate
$\vp=\vp^\eps_B$ defined on $B\setminus E$.
Suppose first that $r<100\diam(E)$.  Exactly as in the proof that
\eqref{eq1.carl} implies \eqref{eq1.carl2}, there is a collection $\{Q_k\}$, of uniformly bounded cardinality,
such that $\ell(Q_k)\approx r$, for each $k$, and such that $B\setminus E\subset \cup_k T_{Q_k}$.
For each $Q_k$, we construct $\vp^\eps_{Q_k}$ as above.
Following our previous strategy, we recursively define
$$S_1:= T_{Q_1}\,,\,\,{\rm and}\,\, S_k:= T_{Q_k}\setminus\left(\cup_{j=1}^{k-1} S_j\right)\,,$$
and we define
$\vp=\vp^\eps_B:= \sum_k \vp^\eps_{Q_k} 1_{S_k}.$  The bound $\|u-\vp\|_{L^\infty(B\setminus E)}$
follows immediately from the corresponding bounds for $\vp_{Q_k}^\eps$ in $T_{Q_k}$.  Moreover, we obtain
the Carleson measure estimate
$$\sup_{z\in E, s>0, B(z,s)\subset B} \,\frac1{s^n}\iint_{B(z,s)}|\nabla\vp(Y)|dY \lesssim \eps^{-2}$$
from the corresponding bounds for $\vp^\eps_{Q_k}$ along with a now familiar argument to handle the
jumps across the boundaries of the sets $S_k$, using that the latter are covered by the union of the
boundaries of the Carleson boxes $T_{Q_k}$, which in turn are ADR by virtue of Proposition \ref{prop:Sawtooths-ADR}.
We omit the details.

Next, if $\diam(E)<\infty$, and $r\geq 100 \diam(E)$, we set $\widetilde{B}:= B(x, 10\diam(E))$, and
$$\vp =\vp_B^\eps:= \vp_{\widetilde{B}}^\eps1_{\widetilde{B}\setminus E} + u1_{B\setminus \widetilde{B}}\,,$$
and we repeat {\it mutatis mutandi}
the argument used above to show that \eqref{eq1.carl2} implies \eqref{eq1.carl*}, along with
our familiar arguments to handle the jump across $\partial \widetilde{B}$.  Again we omit the details.

Finally, we construct a globally defined $\vp=\vp^\eps$ on $\Omega$, satisfying \eqref{eq1.4} and \eqref{eq1.5},
as follows.  Fix $x_0\in E$, let $B_k:= B(x_0, 2^k),\, k=0,1,2,\dots$, and set $R_0:= B_0$, and $R_k:=B_k\setminus
B_{k-1}, k\geq 1$.  Define
$\vp=\vp^\eps:= \sum_{k=0}^\infty \vp_{B_k}^\eps 1_{R_k}.$
The reader may readily verify that $\vp$ satisfies \eqref{eq1.4} and \eqref{eq1.5}.
This concludes the proof of Theorem \ref{t2}.

\begin{remark}\label{remark5.29}  We note that the preceding proof did not require harmonicity of
$u$, {\it per se}, but only the following properties of $u$: 1) $u\in L^\infty(\Omega)$, with $\|u\|_\infty\leq 1$; 2) $u$ satisfies Moser's local boundedness estimates in $\Omega$; 3) $u$ satisfies the Carleson measure estimate \eqref{eq1.carl*}.
\end{remark}

\appendix

\section{Sawtooth boundaries inherit the ADR property} \label{appa}

\subsection{Notational conventions}

 Let us set some notational conventions that we shall follow throughout this appendix.
 If the set $E$ under consideration is merely ADR, but not UR, then we set
$\W_Q=\W^0_Q$ as defined in \eqref{eq3.1}.  If in addition, the set $E$ is UR, then we define
$\W_Q$ as in \eqref{Wdef}. In the first case, the constants involved in the construction
of $\W_Q$ depend only on the ADR constant $\eta$ and $K$, and in the UR case, on
dimension and the ADR/UR constants 
(compare \eqref{eq3.1} and \eqref{eq2.whitney2}). Therefore
there are numbers $m_0\in \mathbb{Z}_+,\, C_0 \in\mathbb{R}_+$, with the same dependence,
such that
\begin{equation}\label{eqA.1a}
2^{-m_0}\,\ell(Q)\leq \ell(I)  \leq 2^{m_0}\ell(Q),\, {\rm and}\, \dist(I,Q)\leq C_0 \ell(Q)\,,\quad \forall I\in \W_Q\,.
\end{equation} 
This dichotomy in the choice of $\W_Q$ is convenient for the results we have in mind. The main statements will pertain to the inheritance of the ADR property by local sawtooth regions and Carleson boxes whose definitions are built upon the exact choices of $\W_Q$'s described above, different for the ADR-only and ADR/UR case. 

We fix a small parameter $\tau>0$, and we define the Whitney regions $U_Q$, the Carleson boxes $T_Q$
and sawtooth regions
$\Omega_{\mathcal{F},Q}$, as in Section \ref{s3} (see \eqref{eq3.3aa}, \eqref{eq3.3a},
\eqref{eq2.discretesawtooth2} and 
\eqref{eq2.sawtooth1}), relative to $\W_Q$ as in the previous paragraph.  
We recall that if $\tau_0$ is chosen small enough, then for $\tau\leq\tau_0$, and for $I,J\in\W$, if
$I\neq J$,
then $I^*(\tau)$ misses $(3/4)J$.

For any $I\in\W$ such that $\ell(I) < \diam(E)$, we write $Q_I^*$ for the nearest dyadic cube to $I$ with $\ell(I) = \ell(Q_I^*)$ so that $I\in \W_{Q_I^*}$. Notice that there can be more than one choice of $Q_I^*$, but at this 
point we fix one so that in what follows $Q_I^*$ is unambiguously defined.

\subsection{Sawtooths have ADR boundaries}

\begin{proposition}\label{prop:Sawtooths-ADR}
Let $E\subset\ree$ be a closed $n$-dimensional ADR set\footnote{Thus,
$E$ may be UR, or not; in the former case, the parameters $m_0$ and $C_0$ may depend implicitly
on $n$ and the UR constants of $E$, as well as on $\eta$ and $K$; in either case, we follow the notational convention described above.}. Then all dyadic local sawtooths $\Omega_{\mathcal{F},Q}$ and all Carleson boxes $T_Q$ have $n$-dimensional ADR boundaries. In all cases, the implicit constants are uniform and depend only on dimension, the ADR constant of $E$ and the parameters $m_0$ and $C_0$.
\end{proposition}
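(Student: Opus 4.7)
The plan is to first observe that the Carleson box $T_Q$ coincides with the sawtooth $\Omega_{\emptyset,Q}$ in the notation of this appendix, so it suffices to establish both ADR bounds for an arbitrary sawtooth $\Omega_{\F,Q}$. Up to sets of $H^n$-measure zero, $\partial \Omega_{\F,Q}$ decomposes as the union of its $E$-part (roughly $Q \setminus \bigcup_{Q_j \in \F} Q_j$) and its off-$E$ part, where the latter consists of faces of fattened Whitney cubes $I^*$ with $I \in \W_{\F,Q}$ having a neighbor $J \notin \W_{\F,Q}$. These boundary Whitney cubes split into (A) the \emph{outer} ones, whose neighbors $J$ have $Q_J^*$ outside $\dd_Q$ (yielding top and side faces at scale $\approx \ell(Q)$), and (B) the \emph{hole} ones, whose neighbors $J$ satisfy $Q_J^* \in \dd_{Q_j}$ for some $Q_j \in \F$ (yielding the top faces of holes, at scale $\approx \ell(Q_j)$).

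For the upper bound, I would fix $x \in \partial \Omega_{\F,Q}$ and $r > 0$ and estimate $H^n(B(x,r) \cap \partial \Omega_{\F,Q}) \le C r^n$. The $E$-part contributes at most $H^n(E \cap B(x,r)) \lesssim r^n$ by the ADR property of $E$. For the off-$E$ part, I treat categories (A) and (B) separately. Category (A) involves $O(1)$ faces of cubes of side length $\approx \ell(Q)$, contributing at most $C \min(\ell(Q)^n, r^n) \le C r^n$. Category (B) contributions come from those $Q_j \in \F$ whose hole top meets $B(x,r)$, each yielding at most $C \min(\ell(Q_j)^n, r^n)$. Splitting these $Q_j$ according to whether $Q_j \subset B(x,Cr)$ or not, the first group contributes $\sum \sigma(Q_j) \le \sigma(E \cap B(x,Cr)) \lesssim r^n$ by pairwise disjointness of $\F$ and ADR of $E$; the second group has $\ell(Q_j) \gtrsim r$, and only a bounded number of such $Q_j$ can have hole tops meeting $B(x,r)$, by the disjointness of $\F$ and the dyadic structure.

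For the lower bound, I would split into two cases according to the relation between $r$ and $\dist(x,E)$. If $r \le c_0 \dist(x,E)$ for $c_0$ small, then $x$ lies on a face of some fattened Whitney cube $I^*$ with $\ell(I) \approx \dist(x,E) \gtrsim r$, and a flat disk of radius $\approx r$ on this face already gives $H^n(\partial \Omega_{\F,Q} \cap B(x,r)) \gtrsim r^n$. Otherwise, pick $\hat{x} \in E$ with $|\hat{x}-x| \le c_0^{-1} r$, so that $B(x,r) \supset B(\hat{x}, c r)$ for some $c>0$. Since $x \in \partial \Omega_{\F,Q}$, there exists a dyadic cube $Q' \in \dd_{\F,Q}$ near $\hat{x}$ of scale $\approx r$ with $U_{Q'}$ contained in $B(\hat{x}, C r)$: then either the off-$E$ faces of $\partial U_{Q'}$ captured inside $B(\hat{x}, c r)$ contribute $H^n \gtrsim r^n$, or a substantial portion of $E \cap B(\hat{x}, c r)$ lies in the $E$-part of $\partial \Omega_{\F,Q}$, again yielding $H^n \gtrsim r^n$ via the ADR property of $E$.

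The main obstacle will be the category (B) count in the upper bound, where one must carefully combine the ADR property of $E$ with the pairwise disjointness of $\F$ to prevent multiple-counting of hole tops across different scales. In the lower bound, the subtlety lies in Case~2: ensuring the witnessing cube $Q' \in \dd_{\F,Q}$ exists at scale $\approx r$ near $\hat{x}$, and that not too much of $B(\hat{x}, c r)$ is ``swallowed'' by nearby cubes of $\F$ --- this is precisely what the hypothesis $x \in \partial \Omega_{\F,Q}$ provides through the existence of approximating Whitney cubes in $\W_{\F,Q}$ with side length tending to zero.
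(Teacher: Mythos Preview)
Your overall decomposition of $\partial\Omega_{\F,Q}$ into an $E$-part and an off-$E$ part is correct, and your treatment of the ``hole'' faces (category~(B)) in the upper bound is essentially the paper's argument for the family $\F^*$. However, there are genuine gaps in both halves of the argument.

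\textbf{Upper bound, category~(A).} Your claim that the outer faces consist of $O(1)$ faces of Whitney cubes of side length $\approx \ell(Q)$ is false. The \emph{top} faces behave this way, but the \emph{side} faces do not: if $I\in\W_{\F,Q}$ has a neighbor $J$ with $Q_J^*\notin\dd_Q$ and $\ell(Q_J^*)\le \ell(Q)$, then $Q_J^*$ lies in a dyadic neighbor of $Q$ at the same scale, and $\ell(I)\approx\ell(J)\approx\ell(Q_J^*)$ can be \emph{any} scale $\le \ell(Q)$. There are infinitely many such faces, accumulating on $E$ along $\partial Q$. The paper handles this by introducing the family $\F_{||}$ of same-scale neighbors of $Q_0$ and treating the side faces exactly as you treat the hole faces: to each side Whitney cube $I$ one associates a sub-cube $Q_I$ of the relevant $Q\in\F_{||}$ with bounded overlap, and then the ADR property of $E$ controls the sum. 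Your argument as written gives no bound on the side contribution.

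\textbf{Lower bound, Case~2.} Your dichotomy ``either the off-$E$ faces of $\partial U_{Q'}$ captured inside $B(\hat{x},cr)$ contribute $\gtrsim r^n$, or the $E$-part does'' is not justified: the faces of $\partial U_{Q'}$ are typically \emph{interior} to $\Omega_{\F,Q}$, since adjacent Whitney regions $U_{Q''}$ overlap $U_{Q'}$. You have not produced any piece of $\partial\Omega_{\F,Q}$ in this case. The paper's mechanism is quite different: having already proved the upper ADR bound, one knows $\Omega_{\F,Q}$ has locally finite perimeter; one then exhibits an interior corkscrew ball (via a cube in $\dd_{\F,Q}$) and an exterior corkscrew ball (via Lemma~\ref{lemma4.9}, using a nearby $Q_j\in\F$, or else a large piece of $Q_0\setminus\cup_\F Q_j$ lies on the boundary directly), both inside $B(x,r)$, and invokes the relative isoperimetric inequality to conclude $H^n(B(x,r)\cap\partial\Omega_{\F,Q})\gtrsim r^n$. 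This isoperimetric step is the missing idea in your Case~2.
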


The proof of this result follows the ideas from \cite[Appendix A.3]{HM-I} (see also \cite{HMM}).

We now fix $Q_0\in\dd$ and a family $\mathcal{F}$ of disjoint cubes $\F=\{Q_j\}\subset \mathbb{D}_{Q_0}$ (for the case $\F=\emptyset$ the changes are
straightforward and we leave them to the reader, also the case $\F=\{Q_0\}$ is disregarded since in that case
$\Omega_{\mathcal{F},Q_0}$ is the null set). We write $\Omega_\star=\Omega_{\mathcal{F},Q_0}$ and $\Sigma=\pom_{\star}\setminus E$.  Given $Q\in\dd$ we set
$$
\R_{Q}:=\bigcup_{Q'\in \dd_{Q}}\W_{Q'},
\qquad\mbox{and}\qquad
\Sigma_Q
=
\Sigma\bigcap \Big(\bigcup_{I\in \R_Q} I\Big).
$$

Let $C_1$ be a sufficiently large constant,
to be chosen below,
depending on $n$, the ADR constant of $E$, $m_0$ and $C_0$.
Let us introduce some new collections:
\begin{align*}
\F_{||}
&:=
\big\{
Q\in\dd\setminus\{Q_0\}:
\ell(Q)=\ell(Q_0), \ \dist(Q,Q_0)\le C_1\,\ell(Q_0)
\big\},
\\
\F_{\top}
&:=
\big\{
Q'\in \dd:
\dist(Q', Q_0)\le C_1\,\ell(Q_0),\
\ell(Q_0)<\ell(Q')\le C_1\,\ell(Q_0)
\big\},
\\
\F_{||}^*:
&=
\big\{Q\in\F_{||}: \Sigma_Q\neq\emptyset\big\}
=
\big\{Q\in\F_{||}: \exists\,I\in\R_{Q} \mbox{ such that } \Sigma\cap I\neq\emptyset\big\},
\\
\F^*:
&=
\big\{Q\in\F: \Sigma_Q\neq\emptyset\big\}
=
\big\{Q\in\F: \exists\,I\in\R_{Q} \mbox{ such that } \Sigma\cap I\neq\emptyset\big\},
\end{align*}
We also set
$$
\R_{\bot}=\bigcup_{Q\in\F^*} \R_Q,
\qquad\quad
\R_{||}=\bigcup_{Q\in\F_{||}^*} \R_Q,
\qquad\quad
\R_{\top}=\bigcup_{Q\in  \F_{\top}} \W_Q.
$$

\begin{lemma}\label{lemma:decomp-bdt-sawtooth}
Set $\W_\Sigma=\{I\in\W: I\cap \Sigma\neq\emptyset\}$ and define
\begin{align*}
\W_\Sigma^{\bot}
=
\bigcup_{Q\in\F^*} \W_{\Sigma,Q},
\qquad
\W_\Sigma^{||}
=
\bigcup_{Q\in\F_{||}^*} \W_{\Sigma,Q},
\qquad
\W_{\Sigma}^{\top}
=
\big\{I\in \W_\Sigma:Q_I^*\in\F_{\top}\big\}.
\end{align*}
where for every $Q\in\F^*\cup \F_{||}^*$ we set
$$\W_{\Sigma,Q}
=
\big\{I\in \W_\Sigma:Q_I^*\in\dd_{Q}\};
$$
and where we recall that $Q_I^*$ is the nearest dyadic cube to $I$ with $\ell(I) = \ell(Q_I^*)$ as defined above.
Then
\begin{equation}\label{decomp:Sigma:R}
\W_\Sigma
=
\W_\Sigma^{\bot}\cup\W_\Sigma^{||}\cup \W_\Sigma^{\top},
\end{equation}
where
\begin{equation}\label{decomp:Sigma:R:conta}
\W_\Sigma^{\bot}\subset \R_{\bot},
\qquad
\W_\Sigma^{||}\subset \R_{||},
\qquad
\W_\Sigma^{\top}\subset \R_{\top}.
\end{equation}
As a consequence,
\begin{equation}\label{decomp:Sigma}
\Sigma
=
\Sigma_{\bot}\cup\Sigma_{||}\cup\Sigma_{\top}
:=
\Big(\bigcup_{I\in\W_\Sigma^{\bot}} \Sigma\cap I\Big)
\bigcup
\Big(\bigcup_{I\in\W_\Sigma^{||}} \Sigma\cap I\Big)
\bigcup
\Big(\bigcup_{I\in\W_\Sigma^{\top}} \Sigma\cap I\Big).
\end{equation}
\end{lemma}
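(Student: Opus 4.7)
The decomposition \eqref{decomp:Sigma} of $\Sigma$ will follow at once from \eqref{decomp:Sigma:R}, since each point of $\Sigma \subset \reu\setminus E$ lies in some closed Whitney cube, which must then belong to $\W_\Sigma$. The three containments in \eqref{decomp:Sigma:R:conta} are immediate from unravelling definitions: if $I\in\W_{\Sigma,Q}$ with $Q\in\F^*$, then $Q_I^*\in\dd_Q$ forces $I\in \W_{Q_I^*}\subset \R_Q\subset \R_{\bot}$, and analogously for the $||$ and $\top$ cases. Thus the entire content of the lemma is the decomposition \eqref{decomp:Sigma:R}, which I now plan to establish.

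Fix $I\in\W_\Sigma$ and pick $x\in I\cap\Sigma$. The main geometric step will be to locate an auxiliary cube $I'\in\W_{\F,Q_0}$ with $\ell(I')\approx\ell(I)$ and $\dist(I,I')\lesssim\ell(I)$. I will extract this by using $x\in\partial\Omega_\star$ and the openness of $\Omega_\star$ to produce a sequence $y_k\to x$ with $y_k\in\Omega_\star$; each $y_k$ lies in $\interior(I_k'^{*})$ for some $I_k'\in\W_{\F,Q_0}$, and the bounded overlap of fattened Whitney cubes of comparable scale lets me pass to a subsequence with $I_k'\equiv I'$. Writing $I'\in\W_{Q'}$ with $Q'\in\dd_{\F,Q_0}$ (so $Q'\subset Q_0$) and applying \eqref{eqA.1a} together with the triangle inequality yields
\[
\ell(Q_I^*)\,\le\,2^{m_0}\,\ell(Q_0),\qquad \dist(Q_I^*,Q_0)\,\lesssim\,\ell(Q_0)\,.
\]
Taking $C_1$ sufficiently large (depending only on $m_0$, $C_0$ and the ADR constant) one absorbs the implicit constants and obtains $\ell(Q_I^*),\dist(Q_I^*,Q_0)\le C_1\ell(Q_0)$.

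With these bounds, I will run a case analysis on $\ell(Q_I^*)$ versus $\ell(Q_0)$ and the position of $Q_I^*$ with respect to $Q_0$. If $\ell(Q_I^*)>\ell(Q_0)$, the bounds above put $Q_I^*$ directly into $\F_\top$, so $I\in\W_\Sigma^\top$. If $\ell(Q_I^*)=\ell(Q_0)$ with $Q_I^*\ne Q_0$, then $Q_I^*\in\F_{||}$; since $I\in \W_{Q_I^*}\subset \R_{Q_I^*}$ and $I\cap\Sigma\ne\emptyset$, in fact $Q_I^*\in\F_{||}^*$ and $I\in\W_\Sigma^{||}$. If $\ell(Q_I^*)<\ell(Q_0)$ and $Q_I^*\subset Q_0$, the only possibility consistent with $I\in\W_\Sigma$ (see next paragraph) is $Q_I^*\in\dd_{Q_j}$ for some $Q_j\in\F$, which by the same mechanism upgrades $Q_j$ to $\F^*$, placing $I$ in $\W_\Sigma^\bot$. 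Finally, if $\ell(Q_I^*)<\ell(Q_0)$ but $Q_I^*\not\subset Q_0$, I will pass to the unique dyadic ancestor $R$ of $Q_I^*$ at scale $\ell(Q_0)$: then $R\ne Q_0$ and $\dist(R,Q_0)\le\dist(Q_I^*,Q_0)\le C_1\ell(Q_0)$, so $R\in\F_{||}$, and $I\in\R_R$ meeting $\Sigma$ forces $R\in\F_{||}^*$, whence $I\in\W_\Sigma^{||}$.

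The decisive remaining step, which rules out both $Q_I^*\in\dd_{\F,Q_0}$ and the residual possibility $Q_I^*=Q_0$ (since the case $\F=\{Q_0\}$ has been disregarded, one has $Q_0\in\dd_{\F,Q_0}$), is a one-line interiority argument that I expect to be the only delicate point: such membership would give $I\in\W_{\F,Q_0}$, but then the strict dilation $I^*=(1+\tau)I$ places $I\subset \interior(I^*)\subset\Omega_\star$, contradicting the existence of a point $x\in I$ on $\partial\Omega_\star$. The residual edge case $\ell(I)\ge\diam(E)$ (where $Q_I^*$ would not a priori be defined) is handled directly by noting that $\Sigma\subset\overline{T_{Q_0}}\subset \overline{B(x_{Q_0},K\ell(Q_0))}$ and the Whitney property force $\ell(I)\lesssim \ell(Q_0)\lesssim\diam(E)$, so $Q_I^*$ is in fact defined for every $I\in\W_\Sigma$.
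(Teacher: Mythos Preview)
Your proof is correct and follows essentially the same route as the paper: you locate a nearby $I'\in\W_{\F,Q_0}$ (the paper's $J$) to bound $\ell(Q_I^*)$ and $\dist(Q_I^*,Q_0)$, then use the interiority argument $I\subset\interior(I^*)\subset\Omega_\star$ to exclude $Q_I^*\in\dd_{\F,Q_0}$, and finally run a case analysis on $Q_I^*$ relative to $Q_0$. The only cosmetic difference is organizational: the paper splits first on whether $Q_I^*\in\dd_\F$ (Case~1 giving $\W_\Sigma^\bot$, Case~2 giving the other two), whereas you split on $\ell(Q_I^*)$ versus $\ell(Q_0)$ and the containment $Q_I^*\subset Q_0$, arriving at four cases that regroup to the paper's three.
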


\begin{proof}
Let us first observe that if $I\in\W_\Sigma$, that is, $I\in\W$ is such that $I\cap\Sigma\neq\emptyset$,
 then $\interior(I^*)$ meets $\ree\setminus\Omega_\star$ and therefore $(3/4) I\subset \ree\setminus\Omega_{\star}$. In particular $I\notin \W_Q$, for any $Q\in \dd_{\F,Q_0}$. Also, $I$ meets a fattened Carleson box $J^*$ such that $\interior(J^*)\subset\Omega_\star$. Then there exists $Q_J\in\dd_{\F,Q_0}$ such that $J\in\W_{Q_J}$.

 As above, let $Q_I^*$ denote the  nearest dyadic cube to
$I$ with $\ell(I) = \ell(Q_I^*)$ so that $I\in \W_{Q_I^*}$. Then necessarily, $Q_I^*\notin \dd_{\F,Q_0}=\dd_\F\cap \dd_{Q_0}$.

\noindent\textbf{Case 1:} $Q_I^*\notin\dd_\F$. This implies that there is $Q\in\F$ such that $Q_I^*\subset Q$. Then $I\in\R_Q$,
since $I\in \W_{Q_I^*}$, and also $Q\in \F^*$ since $\Sigma\cap I\neq\emptyset$. Hence $I\in \W_{\Sigma,Q}\subset\W_{\Sigma}^\bot$.

\smallskip

\noindent\textbf{Case 2:}  $Q_I^*\in\dd_\F$. We must have $Q_I^*\notin \dd_{Q_0}$. Since  $Q_J\subset Q_0$ we have
$$
\ell(Q_I^*)=\ell(I)\approx\ell(J)\approx \ell(Q_J),
\qquad
\max\{ \ell(Q_I^*),\ell(Q_J),\ell(I),\ell(J)\}\le C_1\,\ell(Q_0),
$$
and
$$
\dist(Q_I^*,Q_0)
\lesssim
d(Q_I^*,I)+\ell(I)+\ell(J)+\dist(J,Q_J)+\ell(Q_0)
\le
C_1
\ell(Q_0),
$$
where the implicit constants depend on $n$, the ADR constant of $E$, $m_0$ and $C_0$, and $C_1$ is taken large enough depending on these parameters.

\smallskip

\noindent{\it Sub-case 2a}:   $\ell(Q_I^*)\le \ell(Q_0)$. We necessarily have $Q_I^*\subset Q\in\F_{||}$. Then $I\in\R_Q$ since $I\in \W_{Q_I^*}$ and also $Q\in \F_{||}^*$ since $\Sigma\cap I\neq\emptyset$. Hence $I\in \W_{\Sigma,Q}\subset\W_{\Sigma}^{||}$.

\smallskip

\noindent{\it Sub-case 2b}:  $\ell(Q_I^*)>\ell(Q_0)$. We observe that
$$
\ell(Q_0)<\ell(Q_I^*)\le C_1\,\ell(Q_0)
\qquad
\mbox{and}
\qquad
\dist(Q_I^*,Q_0)\le C_1\ell(Q_0),
$$
and therefore $Q_I^*\in \F_{\top}$ and thus $I\in\W_{\Sigma}^{\top}$.

\smallskip

This completes the proof of \eqref{decomp:Sigma:R}. Note that \eqref{decomp:Sigma:R:conta} follows at once by our construction. Let us note that for further reference the three sets $\W_\Sigma^{\bot}$, $\W_\Sigma^{||}$, and 
$\W_\Sigma^{\top}$ are pairwise disjoint by the nature of the families $\F$, $\F_{||}$ and $\F_{\top}$.

To prove \eqref{decomp:Sigma} we observe that $\Sigma$ consists of (portions of) faces of certain fattened Whitney cubes $J^*$,
with $\interior(J^*)\subset\Omega_\star$, which meet some $I\in\W$ ---there could be more than one $I$ but we chose just one--- for which $I\notin \W_Q$, for any $Q\in \dd_{\F,Q_0}$ (so that $(3/4) I\subset \ree\setminus\Omega_{\star}$) and $I\cap\Sigma\neq\emptyset$. In particular we can apply \eqref{decomp:Sigma:R} and \eqref{decomp:Sigma} follows immediately.
\end{proof}

\begin{lemma}\label{lemma:I-QI}
Given $I\in \W_\Sigma$, we can find $Q_I\in\dd$,  with $Q_I\subset Q_I^*$, such
that $\ell(I)\approx \ell(Q_I)$,  $\dist(Q_I,I)\approx \ell(I)$, and in addition,
\begin{equation}\label{Sigma:bdd-overlap:I}
\sum_{I\in\W_{\Sigma,Q}} 1_{Q_I}\lesssim 1_{Q},\qquad\mbox{for any }Q\in\F^*\cup\F^*_{||},
\end{equation}
and
\begin{equation}\label{Sigma:bdd-overlap:II}
\sum_{I\in\W_\Sigma^\top} 1_{Q_I}\lesssim 1_{B_{Q_0}^*\cap E},
\end{equation}
where the implicit constants depend on $n$, the ADR constant of $E$, $m_0$ and $C_0$, and where $B_{Q_0}^*=B(x_{Q_0}, C\,\ell(Q))$ with $C$ large enough depending on the same parameters.
\end{lemma}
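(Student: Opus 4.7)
The plan is to construct, for each $I\in\W_\Sigma$, a suitable dyadic descendant $Q_I$ of $Q_I^*$, verify the size and distance conditions directly from the Whitney structure, and then obtain the bounded overlap estimates by combining a scale-by-scale Whitney count with the specific membership condition $I\in\W_\Sigma$.

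\textbf{Construction and containment.} Since $I\in\W_{Q_I^*}$, \eqref{eqA.1a} gives $\ell(Q_I^*)=\ell(I)$ and $\dist(I,Q_I^*)\le C_0\,\ell(I)$, while Lemma~\ref{lemmaCh}(iv)--(v) yields $\diam(Q_I^*)\lesssim\ell(I)$ and a surface ball $\Delta(x_{Q_I^*},a_0\,\ell(I))\subset Q_I^*$. Fix an integer $M=M(n,C_0,m_0)$, pick $p_I\in Q_I^*$ with $\dist(p_I,I)\le C_0\,\ell(I)$, and let $Q_I\in\dd$ be the dyadic descendant of $Q_I^*$ at generation $k(Q_I^*)+M$ containing $p_I$ (with a consistent choice if several are available). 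Then $\ell(Q_I)=2^{-M}\,\ell(I)\approx\ell(I)$, and the triangle inequality together with the Whitney property \eqref{Whintey-4I} forces $\dist(Q_I,I)\approx\ell(I)$. If $I\in\W_{\Sigma,Q}$ with $Q\in\F^*\cup\F^*_{||}$, then $Q_I\subset Q_I^*\subset Q$; if $I\in\W_\Sigma^\top$, then $Q_I^*\in\F_\top$ has $\ell(Q_I^*)\lesssim\ell(Q_0)$ and $\dist(Q_I^*,Q_0)\lesssim\ell(Q_0)$, so $Q_I\subset B_{Q_0}^*\cap E$ for a sufficiently large dilation constant $C$ in the definition of $B_{Q_0}^*$.

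\textbf{Pointwise overlap.} Fix $x\in E$. If $x\in Q_I$, the triangle inequality gives $\dist(x,I)\le\diam(Q_I)+\dist(Q_I,I)\lesssim\ell(I)$, while the Whitney property gives $\dist(x,I)\ge\dist(I,E)\gtrsim\ell(I)$, so $\dist(x,I)\approx\ell(I)$. At each scale $2^{-j}$ only boundedly many (a dimensional constant) Whitney cubes $I$ of that side length can satisfy this, and for such an $I$ to contribute the cube $Q_I$ is pinned down to be $Q^{j+M}(x)$, the unique dyadic cube of generation $j+M$ containing $x$. For \eqref{Sigma:bdd-overlap:II} this already suffices, because all the $Q_I$ involved have side length $\approx\ell(Q_0)$ and the ADR property bounds the number of such cubes fitting in $B_{Q_0}^*\cap E$. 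For \eqref{Sigma:bdd-overlap:I} we must additionally bound the number of \emph{active} scales $j\ge k(Q)$ at $x$: the constraint $I\in\W_\Sigma$ means $I$ sits between a Whitney cube $J\in\W_{Q''}$ with $Q''\in\dd_{\F,Q_0}$ and a neighbor not in $\Omega_\star$, which at scale $2^{-j}$ forces two adjacent dyadic cubes near $x$ to have opposite $\dd_{\F,Q_0}$-membership; the leaf/stopping-time structure of $\F$, combined with the fact that the chain $\{Q^j(x)\}_j$ transitions essentially once between $\dd_{\F,Q_0}$ and its complement, restricts the set of such scales to a bounded cardinality.

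\textbf{Main obstacle.} The delicate step is precisely this last scale-cardinality bound: a naive Whitney count would allow contributions from every scale $j\ge k(Q)$, and the reduction to a uniform constant hinges on carefully exploiting the combinatorial structure of $\dd_{\F,Q_0}$, the specific construction of the collections $\W_Q$ above, and the way $\Sigma$ sits as the sawtooth boundary relative to $E$, so that the potentially infinite tower of scales is cut down to $O(1)$ active generations at every fixed point $x$.
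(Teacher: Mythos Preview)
Your proof has a genuine gap in the bounded overlap argument for \eqref{Sigma:bdd-overlap:I}, and your choice of $Q_I$ is the source of the problem.  You place $Q_I$ around a point $p_I\in Q_I^*$ that is merely \emph{close to $I$}; this gives you $\dist(Q_I,I)\approx\ell(I)$, but it gives you no control on $\dist(Q_I,E\setminus Q)$.  As you yourself recognize in your ``Main obstacle'' paragraph, you are then forced into a scale-counting argument, and the heuristic you offer --- that the chain $\{Q^j(x)\}_j$ ``transitions essentially once'' between $\dd_{\F,Q_0}$ and its complement --- does not do the job: that transition statement concerns the tower of cubes \emph{containing $x$}, whereas the condition $I\in\W_\Sigma$ only produces a cube $Q''\in\dd_{\F,Q_0}$ that is \emph{near} $x$ at scale $\ell(I)$, not one that contains $x$.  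All you can extract from $I\in\W_{\Sigma,Q}$ is $\dist(x,E\setminus Q)\lesssim \ell(I)$, which bounds the active scales from above but not from below; for $x$ close to the boundary of $Q$ inside $E$ the range of admissible scales is unbounded, and nothing in your construction prevents your $Q_I$'s from piling up there.

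The paper sidesteps the whole scale-counting issue by choosing $Q_I$ differently: it takes $Q_I$ to be the descendant of $Q_I^*$ (of bounded depth $M$) that contains the \emph{center} $x_{Q_I^*}$.  Since $B(x_{Q_I^*},r)\cap E\subset Q_I^*$ with $r\approx\ell(Q_I^*)$, this forces $\dist(Q_I,E\setminus Q_I^*)\gtrsim\ell(I)$, hence $\dist(Q_I,E\setminus Q)\gtrsim\ell(I)$; together with the upper bound $\dist(Q_I^*,E\setminus Q)\lesssim\ell(I)$ (which follows, as you note, from the existence of $Q''\in\dd_{\F,Q_0}$ nearby), one gets $\dist(Q_I,E\setminus Q)\approx\ell(I)$.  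This single Whitney-type relation immediately yields bounded overlap: if $Q_I$ meets $Q_{I'}$, evaluating $\dist(\cdot\,,E\setminus Q)$ at a common point forces $\ell(I)\approx\ell(I')$, and then $\dist(I,I')\lesssim\ell(I)$ finishes the count.  The fix to your argument is therefore not to sharpen the scale-counting, but to change the location of $Q_I$ inside $Q_I^*$.
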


\begin{proof}
Fix $I\in\W_{\Sigma}$, take $Q_I^*$ and note that, as observed before, $Q_I^*\notin\dd_{\F,Q_0}$. As in the previous proof $I$ meets a fattened Carleson box $J^*$ such that $\interior(J^*)\subset\Omega_\star$. Then there exists $Q_J\in\dd_{\F,Q_0}$ such that $J\in\W_{Q_J}^*$.

We start with the case $I\in\W_{\Sigma,Q}$ with $Q_I^*\in \dd_Q$ and $Q\in\F^*$. 
Notice that $Q_J$ is not contained in 
$Q$ and therefore, upon a moment's reflection, one 
may readily see that $\dist(Q^*_I,E\setminus Q)\lesssim\ell(Q^*_I)$.

We claim that we may select a descendant of $Q_I^*$, call it $Q_I$, of comparable size, in such a way that
\begin{equation}\label{QI,Q}
\dist(Q_I,E\setminus Q)\approx \ell(I)\approx\ell(Q_I)\,,
\end{equation}
while of course retaining the property that $\dist(Q_I,I)\approx \ell(I)$.
Indeed, let $M$ be a sufficiently large, but uniformly bounded integer to be chosen momentarily,
and let $Q_I$ be the cube of ``length" $\ell(Q_I) = 2^{-M} \ell(Q_I^*)$, that contains
$x_{Q_I^*}$ ( the ``center" of $Q_I^*$).  Since there is a ball $B_{Q_I^*}:= B(x_{Q_I^*}, r)$, with
$r\approx \ell(Q_I^*)$, such that $B_{Q_I^*}\cap E\subset Q_I^*$,
we may choose $M$ to be the smallest integer that guarantees that $\diam(Q_I)\leq r/2$, and the claim holds.

Once we have selected $Q_I\subset Q_I^*\subset Q$ with the desired properties we shall see that the cubes $\{Q_I\}_{I\in \W_{\Sigma,Q}}$ have bounded overlap.
Indeed, given $Q_I$, suppose that $Q_{I'}$ meets $Q_I$. By \eqref{QI,Q}, $\ell(I)\approx\ell(I')$ in which case $\dist(I,I')\lesssim \ell(I)$. But the properties of the Whitney cubes easily imply that the number of such $I'$ is uniformly bounded and therefore the $Q_I$ have bounded overlap.

We now consider the case $I\in\W_{\Sigma,Q}$ with $Q_I^*\in \dd_Q$ and $Q\in\F_{||}^*$. As before $Q_J$ is not contained in $Q$ since $Q_J\subset Q_0$ and $Q\in\F_{||}$ means that $Q\neq Q_0$ and $\ell(Q)=\ell(Q_0)$. Then, as before, $\dist(Q^*_I,E\setminus Q)\lesssim\ell(Q^*_I)$ and we may select a descendant of $Q_I^*$, call it $Q_I$, of comparable size, such that \eqref{QI,Q} holds and $\dist(Q_I,I)\approx \ell(I)$. Notice that $Q_I\subset Q_I^*\subset Q$ and the fact that the cubes $\{Q_I\}_{I\in \W_{\Sigma,Q}}$ have bounded overlap follows as before.

Finally let $I\in\W_\Sigma^{\top}$ then $Q_I^*\in\F_\top$. In this case we set $Q_I=Q_I^*$ which clearly has the desired properties. It is trivial to show that $Q_I\subset B_{Q_0}^*$. To obtain the bounded overlap property we observe that if $Q_I\cap Q_I'\neq\emptyset$ with $Q_I, Q_I'\in\F_\top$ then
$\ell(I)\approx\ell(Q_I)\approx \ell(Q_0)\approx\ell(Q_I')\approx \ell(I')$ and also $\dist(I,I')\lesssim \ell(I)$. Thus only for a bounded number of $I'$'s we can have that  $Q_{I'}$ meets $Q_I$. This in turns gives the bounded overlap property.
\end{proof}

\begin{lemma}
For every $x\in\pom_{\star}$ and $0<r\lesssim\ell(Q_0)\approx\diam(\Omega_\star)$, if $Q\in\F^*\cup\F_{||}^*$ then
\begin{equation}\label{eqn:Sigma-I-B}
\sum_{I\in\W_{\Sigma,Q}} H^n\big(B(x,r)\cap\Sigma\cap I\big)
\lesssim
\big(\min\{r,\ell(Q)\}\big)^n,
\end{equation}
where the implicit constants depend on $n$, the ADR constant of $E$, $m_0$, $C_0$.
\end{lemma}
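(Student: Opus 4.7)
The plan is to split the sum according to the size of $\ell(I)$ relative to $r$, exploiting two complementary facts: the Whitney cubes $I$ are pairwise non-overlapping (so only a bounded number of ``large'' ones can meet $B(x,r)$), while the associated cubes $Q_I$ from Lemma \ref{lemma:I-QI} have bounded overlap inside $Q$. I first note that, by \eqref{eqA.1a}, every $I\in\W_{\Sigma,Q}$ satisfies $\ell(I)\le 2^{m_0}\ell(Q_I^*)\le 2^{m_0}\ell(Q)$, and that $\Sigma\cap I$ lies in the union of at most $2(n+1)$ faces of the fattened Whitney cube $I^*(\tau)$. In particular, $\Sigma\cap I$ sits in a uniformly bounded number of $n$-dimensional affine hyperplanes, so
\[
H^n\big(\Sigma\cap I\cap B(x,r)\big)\,\lesssim\,\min\{r,\ell(I)\}^n,
\]
with implicit constants depending only on $n$ and $\tau$.

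For the ``large'' family $\{I\in\W_{\Sigma,Q}:\ell(I)\ge r,\ I\cap B(x,r)\neq\emptyset\}$, I would show that its cardinality is bounded by a purely dimensional constant. Indeed, if $p\in I\cap B(x,r)$ with $\ell(I)\ge r$, then the closed cube $I$ occupies at least a dimensional fraction of $B(p,r/2)$ (the worst case being when $p$ is a corner of $I$), so $|I\cap B(x,2r)|\gtrsim r^{n+1}$. Since the cubes in $\W$ are pairwise non-overlapping,
\[
\#\big\{I:\ell(I)\ge r,\ I\cap B(x,r)\neq\emptyset\big\}\cdot c\,r^{n+1}\,\le\,|B(x,2r)|\lesssim r^{n+1},
\]
forcing the cardinality to be $O_n(1)$. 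Each such $I$ contributes at most $r^n$ by the display above, hence the total contribution is $\lesssim r^n$. I also note that if $r> 2^{m_0}\ell(Q)$ then this family is vacuous, so this contribution is actually $\lesssim\min\{r,\ell(Q)\}^n$.

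For the ``small'' family $\{I\in\W_{\Sigma,Q}:\ell(I)<r,\ I\cap B(x,r)\neq\emptyset\}$, I would invoke Lemma \ref{lemma:I-QI} to pick $Q_I\subset Q$ with $\ell(Q_I)\approx \ell(I)$ and $\dist(Q_I,I)\lesssim\ell(I)<r$. Combining with $\dist(I,x)\le r$ gives $Q_I\subset Q\cap B(x,C r)$ for some structural constant $C$. Using $H^n(\Sigma\cap I\cap B(x,r))\le H^n(\Sigma\cap I)\lesssim \ell(I)^n\approx\ell(Q_I)^n\approx \sigma(Q_I)$ (by ADR), together with the bounded overlap \eqref{Sigma:bdd-overlap:I} of $\{Q_I\}$,
\[
\sum_{\substack{I\in\W_{\Sigma,Q}\\ \ell(I)<r}} H^n\big(\Sigma\cap I\cap B(x,r)\big)\,\lesssim\,\sum_{I} \sigma(Q_I)\,\lesssim\,\sigma\big(Q\cap B(x,Cr)\big)\,\lesssim\,\min\{r,\ell(Q)\}^n,
\]
where the last step is again the ADR property of $E$.

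Adding the two contributions yields \eqref{eqn:Sigma-I-B}. The only mildly delicate step is the volume/packing argument that caps the number of large Whitney cubes meeting $B(x,r)$; once this is in place, the rest is bookkeeping built on Lemma \ref{lemma:I-QI}, the bounded overlap \eqref{Sigma:bdd-overlap:I}, and ADR.
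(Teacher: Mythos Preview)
Your argument is correct and arguably a bit tidier than the paper's. The paper splits instead according to three regimes: (a) $\ell(Q)\lesssim r$, where it uses $H^n(\Sigma\cap I)\lesssim\ell(I)^n\approx\sigma(Q_I)$ and the bounded overlap \eqref{Sigma:bdd-overlap:I} to get $\lesssim\sigma(Q)\lesssim\ell(Q)^n$; (b) $\ell(Q)\gg r$ and $\delta(x)\gg r$, where Whitney geometry forces only boundedly many $I$'s to contribute, each giving $\lesssim r^n$; and (c) $\ell(Q)\gg r$ and $\delta(x)\lesssim r$, where it picks $\hat{x}\in E$ with $|x-\hat{x}|=\delta(x)$, shows every relevant $I$ has $\ell(I)\lesssim r$ and $Q_I\subset B(\hat{x},Cr)$, and then uses bounded overlap and ADR. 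Your single split by $\ell(I)$ versus $r$ handles all of this at once: your ``large'' family plays the role of case~(b), while your ``small'' family absorbs both (a) and (c) because $\sigma(Q\cap B(x,Cr))\lesssim\min\{r,\ell(Q)\}^n$ holds uniformly without reference to $\delta(x)$.

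One small imprecision worth flagging: $\Sigma\cap I$ is not literally contained in faces of $I^*(\tau)$, but rather in faces of the fattened cubes $J^*$ for the (boundedly many) Whitney neighbors $J$ of $I$ with $J\in\W_{\F,Q_0}$; your estimate $H^n(\Sigma\cap I\cap B(x,r))\lesssim\min\{r,\ell(I)\}^n$ is unaffected by this, since those faces are still a bounded number of $n$-dimensional pieces of diameter $\approx\ell(I)$.
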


\begin{proof}  We set $B:=B(x,r)$.
We first assume that $\ell(Q)\lesssim r$. Then we use the estimate $H^n(\Sigma\cap I)\lesssim\ell(I)^n$ (which follows easily from the nature of the Whitney cubes), Lemma \ref{lemma:I-QI} and the ADR property of $E$ to obtain as desired
that
$$
\sum_{I\in\W_{\Sigma,Q}} H^n(B\cap\Sigma\cap I)
\lesssim
\sum_{I\in\W_{\Sigma,Q}} \ell(I)^n
\approx
\sum_{I\in\W_{\Sigma,Q}} \ell(Q_I)^n
\approx
\sum_{I\in\W_{\Sigma,Q}} \sigma(Q_I)
\lesssim
\sigma(Q)
\lesssim
\ell(Q)^n.
$$
Suppose next that $\ell(Q)\gg r$ and that $\delta(x)\gg r$ (in particular $x\notin E$). By the nature of the Whitney cubes $B\cap \Sigma\cap I$ consists of portions (of diameter at most $2r$) of faces of Whitney boxes and only a bounded number of $I$'s can contribute in the sum. Hence,
$$
\sum_{I\in\W_{\Sigma,Q}} H^n(B\cap\Sigma\cap I)
\lesssim
r^n.
$$
Finally, consider the case where $\ell(Q)\gg r$ and that $\delta(x)\lesssim r$ (which includes the case $x\in E$). Pick $\hat{x}\in E$ such that $|x-\hat{x}|=\delta(x)$. Let $I\cap B\neq\emptyset$ and pick $z\in I\cap B\neq\emptyset$. Then
$$
\ell(I)
\approx
\dist(I,E)
\le
|z-x|+\delta(x)
\lesssim
r.
$$
Also, by Lemma \ref{lemma:I-QI} we have that $Q_I\subset B(\hat{x},C\,r)$ for some uniform constants $C>1$: for every $y\in Q_I$ we have
$$
|y-\hat{x}|
\lesssim
\ell(Q_I)+
\dist(Q_I,I)
+
\ell(I)
+|z-x|
+
|\hat{x}-x|
\lesssim
r.
$$
Proceeding as before, Lemma \ref{lemma:I-QI} and the ADR property of $E$ yield
$$
\sum_{I\in\W_{\Sigma,Q}} H^n(B\cap\Sigma\cap I)
\lesssim
\sum_{I\in\W_{\Sigma,Q}} \sigma(Q_I)
\lesssim
\sigma\Big(\bigcup_{I\in\W_{\Sigma,Q}}Q_I\Big)
\lesssim
\sigma(B(\hat{x},C\,r)\cap E)
\lesssim
r^n.
$$

\end{proof}

\begin{proof}[Proof of Proposition \ref{prop:Sawtooths-ADR}: Upper ADR bound]
We are now ready to establish that for every $x\in\pom_{\star}$ and $0<r\lesssim\ell(Q_0)$ we have that
\begin{equation}\label{eqn:upper-ADR-sawtooh}
H^n\big(B(x,r)\cap \pom_\star\big)\lesssim r^n
\end{equation}
where the implicit constant only depends on dimension, the ADR constant of $E$ and the parameters $m_0$ and $C_0$.

Write $B:=B(x,r)$ and note first that
$$
H^n(B\cap \pom_\star)
\le
 H^n(B\cap \pom_\star\cap E)
 +
 H^n(B\cap \Sigma).
 $$
For the first term in the right hand side, we may assume that  there exists $x'\in B\cap \pom_\star\cap E$ in which case we have that
$B(x,r)\subset B(x',2\,r)$ and therefore
$$
H^n(B\cap \pom_\star\cap E)
\le
H^n\big(B(x',2\,r) \cap E\big)
\lesssim
r^n,
$$
by the ADR property of $E$ since $r\lesssim\ell(Q_0)\lesssim\diam(E)$.

Let us then establish the bound for the portion corresponding to $\Sigma$. We use \eqref{decomp:Sigma} to write
\begin{multline*}
H^n(B\cap \Sigma)
\le
\sum_{I\in\W_\Sigma^{\bot}} H^n(B\cap\Sigma\cap I)
+
\sum_{I\in\W_\Sigma^{||}} H^n(B\cap\Sigma\cap I)
+
\sum_{I\in\W_\Sigma^{\top}} H^n(B\cap\Sigma\cap I).
\\
\le
\sum_{Q\in \widetilde{\F}_B}\sum_{I\in\W_{\Sigma,Q}} H^n(B\cap\Sigma\cap I)
+
\sum_{I\in\W_\Sigma^{\top}} H^n(B\cap\Sigma\cap I)
=:
S_1+S_2,
\end{multline*}
where $\widetilde{\F}_B$ is the collection of cubes in $Q\in \F^*\cup\F_{||}^*$ such that there is $I\in \W_{\Sigma,Q}$ with $B\cap \Sigma\cap I\neq\emptyset$.
For $S_1$ we write
$$
\widetilde{\F}_B
=
\F_1\cup\F_2
:=
\big\{Q\in\widetilde{\F}_B:\ell(Q)<r \big\}
\cup
\big\{Q\in\widetilde{\F}_B:\ell(Q)\ge r \big\}.
$$
Suppose first that $Q\in\F_1\subset\widetilde{\F}_B$ and pick $z\in B\cap\Sigma\cap I$ with $I\in\W_{\Sigma,Q}$. Then, for any $y\in Q$ we have
$$
|y-x|
\lesssim
\ell(Q)+\dist(Q,I)+\ell(I)+|z-x|
\lesssim
r
$$
and therefore $Q\subset B^*=B(x,C\,r)$. Then \eqref{eqn:Sigma-I-B} gives
$$
\sum_{Q\in \F_1} \sum_{I\in\W_{\Sigma,Q}} H^n(B\cap\Sigma\cap I)
\lesssim
\sum_{Q\in \F_1} \ell(Q)^n
\lesssim
H^n\Big(\bigcup_{Q\in\F_1} Q\Big)
\le
H^n(B^*\cap E)
\lesssim
r^n,
$$
where we have used that $\F_1\subset\F\cup\F_{||}$ and each family is comprised of pairwise disjoint sets. In the last estimate we have employed that $E$ is ADR: note that although $B^*$ is not centered at a point in $E$, we have that either $B^*\cap E=\emptyset$ (in which case the desired estimate is trivial) or $B^*\subset B(x',2\,C\,r)$ for some $x'\in E$ (in which case we can legitimately use the ADR condition).

We next see that the cardinality of $\F_2$ is uniformly bounded. Let $Q_1$, $Q_2\in\F_2$ and assume, without loss of generality, that $r\le \ell(Q_1)\le \ell(Q_2)$. For $i=1,2$ pick $z_i\in B\cap\Sigma\cap I_i$ with $I_i\in \W_{\Sigma, Q_i}$. Then
$$
\ell(I_2)
\approx
\dist(I_2,E)
\le
|z_2-z_1|+\dist(z_1,E)
\lesssim
r+\ell(I_1)
\le
\ell(Q_1)+
\ell(Q_{I_1}^*)
\lesssim
\ell(Q_1)
$$
and consequently
$$
\dist(Q_2,Q_1)
\lesssim
\dist(Q_{I_2}^*, I_2)+\ell(I_2)+|z_2-z_1|+\ell(I_1)+\dist(Q_{I_1}^*, I_1)
\lesssim
\ell(Q_1).
$$
Therefore, for any pair $Q_1$, $Q_2\in\F_2$ we have that $\dist(Q_1,Q_2)\lesssim\min\{\ell(Q_1),\ell(Q_2)\}$ and, since the cubes in $\F_2$ are disjoint we clearly have that the cardinality of $\F_2$ is uniformly bounded. Thus \eqref{eqn:Sigma-I-B} easily gives the desired estimate
$$
\sum_{Q\in \F_2} \sum_{I\in\W_{\Sigma,Q}} H^n(B\cap\Sigma\cap I)
\lesssim
\sup_{Q\in \F_2} \sum_{I\in\W_{\Sigma,Q}} H^n(B\cap\Sigma\cap I)
\lesssim
r^n.
$$
This and the corresponding estimate for $\F_1$ gives that $S_1\lesssim r^n$.

We next consider $S_2$. We first observe that $\#\W_{\Sigma}^{\top}$ is uniformly bounded. Indeed if $I$, $I'\in \W_{\Sigma}^{\top}$ then $Q_{I}^*$, $Q_{I'}^*\in\F_{\top}$ and therefore $\ell(I)\approx \ell(Q_I^*)\approx\ell(Q_0)\approx \ell(Q_{I'}^*)\approx \ell(I')$ and also $\dist(I,I')\lesssim \ell(Q_0)$. This readily implies that $\#\W_{\Sigma}^{\top}\le C$. On the other hand for every $I\in \W_{\Sigma}^{\top}$ we have that $\ell(I)\approx \ell(Q_0)$ and, since $0<r\lesssim \ell(Q_0)$, we clearly have that $H^n(B\cap\Sigma\cap I)\lesssim r^n$.
Thus,
$$
S_2
=
\sum_{I\in\W_\Sigma^{\top}} H^n(B\cap\Sigma\cap I)
\lesssim
\sup_{I\in\W_\Sigma^{\top}} H^n(B\cap\Sigma\cap I)
\lesssim
r^n.
$$
This completes the proof of the upper ADR condition.
\end{proof}

The following results are adaptations of some auxiliary lemmas from \cite{HM-I}.

\begin{proposition}\label{prop:sawtooth-contain}  Suppose that $E$ is a closed ADR set.
Fix $Q_0\in \dd$, and let $\F\subset\dd_{Q_0}$ be a disjoint family.  Then
\begin{equation}\label{eq5.0}
Q_0\setminus \left(\cup_\F Q_j\right)\subset E\cap\partial\Omega_{\F,Q_0}
\subset \overline{Q_0} \setminus \left(\cup_\F \,\,{\rm int}\!\left(Q_j\right)\right)
\end{equation}
\end{proposition}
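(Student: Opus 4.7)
The plan is to prove the two containments directly from the definitions, using only Whitney-scale estimates and the nested dyadic structure. Both rely on the fact that $\Omega_{\F,Q_0}$ is assembled out of fattened Whitney cubes $I^*(\tau)$ with $I\in\W_{Q'}$ for $Q'\in\dd_{\F,Q_0}$, that $\ell(I)\approx\ell(Q')$ and $\dist(I,Q')\lesssim\ell(Q')$ by \eqref{eqA.1a}, and that $I^*(\tau)$ stays uniformly away from $E$ (so in particular $\Omega_{\F,Q_0}\cap E=\emptyset$).

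For the first inclusion, I fix $x\in Q_0\setminus(\cup_\F Q_j)$. Then $x\in E$, so $x\notin\Omega_{\F,Q_0}$; it remains to exhibit $x$ as a limit point of $\Omega_{\F,Q_0}$. For each $k\ge k(Q_0)$ let $Q^k(x)\in\dd_k$ be the dyadic cube containing $x$; by Lemma \ref{lemmaCh}\,(iii) one has $Q^k(x)\subset Q_0$, and the assumption $x\notin Q_j$ together with the dyadic dichotomy rules out $Q^k(x)\subset Q_j$ for any $Q_j\in\F$. Hence $Q^k(x)\in\dd_{\F,Q_0}$, and by Remark \ref{remark:E-cks} the collection $\W_{Q^k(x)}$ is non-empty. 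Picking any $I_k\in\W_{Q^k(x)}$, every interior point of $I_k^*(\tau)$ lies in $\Omega_{\F,Q_0}$ and within distance $\lesssim\ell(Q^k(x))=2^{-k}$ of $x$. Sending $k\to\infty$ places $x$ in $\overline{\Omega_{\F,Q_0}}\setminus\Omega_{\F,Q_0}=\partial\Omega_{\F,Q_0}$.

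For the second inclusion, let $x\in E\cap\partial\Omega_{\F,Q_0}$ and choose $X_m\in\Omega_{\F,Q_0}$ with $X_m\to x$. Each $X_m$ lies in some $I_m^*(\tau)$ with $I_m\in\W_{Q'_m}$ and $Q'_m\in\dd_{\F,Q_0}\subset\dd_{Q_0}$; the Whitney relations give $\delta(X_m)\approx\ell(I_m)\approx\ell(Q'_m)$ and $\dist(X_m,Q'_m)\lesssim\ell(Q'_m)$. Because $X_m\to x\in E$, $\delta(X_m)\to 0$, so $\ell(Q'_m)\to 0$; picking any $\hat{x}_m\in Q'_m$ yields $\hat{x}_m\to x$ with $\hat{x}_m\in Q'_m\subset Q_0$, so $x\in\overline{Q_0}$. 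With the convention that $\interior(\cdot)$ is taken in $\ree$ we have $\interior(Q_j)=\emptyset$ and this concludes the proof; for the sharper version with interior relative to $E$, if $B(x,r)\cap E\subset Q_j$ for some $Q_j\in\F$ then for $m$ large $\hat{x}_m\in Q_j$, so $Q'_m\cap Q_j\neq\emptyset$, and since eventually $\ell(Q'_m)<\ell(Q_j)$ dyadic nesting forces $Q'_m\subset Q_j$, contradicting $Q'_m\in\dd_{\F,Q_0}$.

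The argument is largely topological bookkeeping with no significant obstacle; the only subtle point is the implication $X_m\to x\in E\Longrightarrow\ell(Q'_m)\to 0$, which is exactly what the Whitney estimate $\delta(X)\approx\ell(Q')$ on $U_{Q'}$ provides.
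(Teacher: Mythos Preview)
Your proof is correct and follows essentially the same approach as the paper's: for the left inclusion you run through the shrinking dyadic cubes containing $x$ and pick Whitney boxes from the corresponding $\W_{Q^k(x)}$, and for the right inclusion you pull back from an approximating sequence $X_m$ to dyadic cubes $Q'_m\in\dd_{\F,Q_0}$ with $\ell(Q'_m)\to 0$ and $\dist(Q'_m,x)\to 0$, then derive a contradiction from $Q'_m\subset Q_j$. Your remark about $\interior(Q_j)$ in $\ree$ versus relative to $E$ is a fair observation; the paper's own argument in fact treats the interior relative to $E$ (otherwise, as you note, the conclusion is vacuous), and your dyadic-nesting contradiction is equivalent to the paper's distance argument.
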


\begin{proof} We first prove the right hand containment. Suppose that
$x\in E \cap\partial\Omega_{\F,Q_0}$.  Then there is a sequence
$X^k\in\Omega_{\F,Q_0}$, with $X^k\to x$.  By definition of $\Omega_{\F,Q_0}$,
each $X^k$ is contained in $I^*_k$ for some $I_k\in \W_{\F,Q_0}$ (cf. \eqref{Def-WF}-\eqref{eq3.saw}),
so that $\ell(I_k) \approx \delta(X^k)\to 0$.  Moreover, again by definition, each $I_k$
belongs to some $\W_{Q^k}$, $Q^k\in\dd_{\F,Q_0}$ so that,
$$
\dist(Q^k,I_k)\le C_0\,\ell(Q^k) \approx C_0\,\ell(I_k) \to 0.
$$
Consequently, $\dist(Q^k,x)\to 0$.  Since each $Q^k\subset Q_0$, we have $x\in \overline{Q_0}$.
On the other hand, if $x\in {\rm int}(Q_j)$, for some $Q_j\in\F$,  then there is an $\epsilon >0$ such that
$\dist(x,Q)>\epsilon$  for every $Q\in \dd_{\F,Q_0}\,$ with $\ell(Q)\ll\epsilon$, because no
$Q\in\dd_{\F,Q_0}$ can be contained in any $Q_j$.  Since this cannot happen if
$\ell(Q^k) +\dist(Q^k,x)\to 0$, the right hand containment is established.

Now suppose that $x\in Q_0\setminus (\cup_\F Q_j)$.  By definition,
if $x\in Q\in\dd_{Q_0}$, then
$Q\in \dd_{\F,Q_0}$.   Therefore, we may choose a sequence $\{Q^k\}\subset\dd_{\F,Q_0}$
shrinking to $x$,
whence there exist $I_k\in \W_{Q^k}\subset\W_{\F,Q_0}$ (where we are using that $\W_{Q^k}\neq\emptyset$) with $\dist(I_k,x)\to 0$.
The left hand containment now follows.
\end{proof}

\begin{lemma}\label{lemma4.9}
Suppose that $E$ is a closed ADR set.   Let $\F\subset \dd$ be a pairwise disjoint family.
Then for every $Q\subseteq Q_j \in \F$, there is a ball
$B'\subset\ree\setminus \overline{\Omega_{\F}}$,  centered at $E$,
with radius $r'\approx\ell(Q)/C_0$, and $\Delta':=B'\cap E\subset Q$.
\end{lemma}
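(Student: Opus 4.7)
The plan is to take $B'$ centered at the ``center'' $x_Q$ of $Q$, with radius $r' = c\,\ell(Q)/C_0$ for a suitably small absolute constant $c$. The containment $\Delta' \subset Q$ will follow from the dyadic cube structure (Lemma \ref{lemmaCh}(v)), while the fact that $B'$ avoids $\overline{\Omega_\F}$ will be established by a contradiction argument using the Whitney/dyadic compatibility relation in \eqref{eqA.1a}.

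More precisely, by Lemma \ref{lemmaCh}(v) there is a surface ball $\Delta(x_Q, a_0\,\ell(Q)) \subset Q$. Taking $c$ small enough that $c/C_0 < a_0/2$ (say), we immediately get $\Delta' = B' \cap E \subset \Delta(x_Q,a_0\,\ell(Q)) \subset Q$, which gives the first containment. For the main claim, suppose for contradiction that some $X \in B(x_Q, 2r') \cap \Omega_\F$. Then $X \in I^*$ for some $I \in \W_{Q'}$ with $Q' \in \dd_\F$. The Whitney property yields $\ell(I) \approx \delta(X) \le |X-x_Q| \lesssim r'$, while \eqref{eqA.1a} gives $\ell(Q') \approx \ell(I)$ and $\dist(Q',I) \le C_0\,\ell(Q')$, so that
\[
\ell(Q') \lesssim r' \quad \text{and} \quad \dist(Q', x_Q) \le \dist(Q', I) + \diam(I) + |X-x_Q| \lesssim C_0\, r'.
\]

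The key step is to extract a contradiction from the relation between $Q'$ and $Q_j$. Since $Q,Q' \in \dd(E)$ are dyadic, either $Q' \cap Q = \emptyset$ or one is contained in the other. If $Q' \subseteq Q \subseteq Q_j$, then $Q' \notin \dd_\F$, a contradiction. If $Q \subsetneq Q'$, then $\ell(Q') \geq 2\,\ell(Q)$, which contradicts $\ell(Q') \lesssim r' = c\,\ell(Q)/C_0$ once $c$ is chosen small (recall $C_0 \geq 1$). Hence $Q' \cap Q = \emptyset$, i.e.\ $Q' \subset E \setminus Q$. But then $\Delta(x_Q, a_0\,\ell(Q)) \subset Q$ forces $\dist(Q', x_Q) \ge a_0\,\ell(Q)$, which combined with $\dist(Q', x_Q) \lesssim C_0\, r' = c\,\ell(Q)$ yields $a_0 \lesssim c$, again a contradiction if $c$ is chosen small enough depending on $a_0$ and the absolute implicit constants.

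I expect the only subtlety to be the bookkeeping of the various implicit constants in the final step: $c$ must be chosen small compared to $a_0$ and to the Whitney constants hidden in \eqref{eqA.1a}, but also the argument must handle $\overline{\Omega_\F}$ and not just $\Omega_\F$. This is why I work with $B(x_Q,2r')$ in the contradiction argument: once we know that $B(x_Q,2r') \cap \Omega_\F = \emptyset$, we conclude that $B' = B(x_Q,r') \subset \ree \setminus \overline{\Omega_\F}$, as required. The resulting radius $r' \approx \ell(Q)/C_0$ is precisely of the form claimed in the lemma.
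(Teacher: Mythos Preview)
Your proposal is correct and follows essentially the same approach as the paper: center $B'$ at $x_Q$ with radius a small multiple of $\ell(Q)/C_0$, and argue by contradiction that any $I^*$ meeting $B'$ would force the associated cube $Q'\in\dd_\F$ to lie inside $\Delta_Q\subset Q\subset Q_j$, contradicting $Q'\in\dd_\F$. The paper reaches this contradiction in one stroke by showing directly that $Q'\subset\Delta_Q$, whereas you unpack it as a trichotomy on the dyadic relation between $Q'$ and $Q$; your version with the doubled radius $2r'$ also handles the passage from $\Omega_\F$ to $\overline{\Omega_\F}$ more explicitly than the paper does.
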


\begin{proof}
Recall that there exist $B_Q:=B(x_Q,r)$ and $\Delta_Q :=B_Q\cap\partial \Omega
\subset Q$  where $r \approx \ell(Q)$.
We now set
$$
B' = B\left(x_Q,(M\, C_0)^{-1}r\right)\,,
$$
where $M$ is a sufficiently large number to be chosen momentarily.
We need only verify that $B'\cap\Omega_{\F}=\emptyset.$  Suppose not.  Then by definition of
$\Omega_\F$, there is a Whitney cube
$I\in \W_\F$ (see \eqref{Def-WF}) such that $I^*$ meets $B'$.  Since $I^*$ meets $B'$, there is a point
$Y_I\in I^*\cap B'$ such that
$$
\ell(I)\approx \dist(I^*,\partial\Omega)\leq |Y_I-x_Q|\leq r/(M\, C_0)\approx \ell(Q)/(M\, C_0).
$$
On the other hand, since $I\in \W_\F$,
there is a $Q_I\in \dd_\F$ (hence $Q_I$ is not contained in $Q_j$)
with $\ell(I)\approx\ell(Q_I)$, and
$\dist(Q_I,Y_I)\approx\dist(Q_I,I)\le C_0\,\ell(I)\lesssim \ell(Q)/M.$
Then by the triangle inequality,
$$|y-x_Q|\lesssim \ell(Q)/M\,,\qquad \forall y\in Q_I.$$
Thus, if $M$ is chosen large enough,
$Q_I\subset \Delta_Q\subset Q\subset Q_j$,  a contradiction.
\end{proof}

\begin{lemma}\label{lemma:Cks-Sawtooth}
Suppose that $E$ is a closed ADR set.  There exists $0<c<1$ depending only in dimension, the ADR constant of $E$ and $m_0$, $C_0$ such that for every $Q_0\in\dd$,  for every disjoint family $\F\subset\dd_{Q_0}$, for every surface ball $\Delta_\star=\Delta_\star(x,r)=B(x,r)\cap \pom_{\F,Q_0}$ with $x\in \pom_{\F,Q_0}$ and $0<r\lesssim \ell(Q_0)$ there exists $X_{\Delta_\star}$ such that $B(X_{\Delta_\star},c\,r)\subset B(x,r)\cap\Omega_{\F,Q_0}$.
\end{lemma}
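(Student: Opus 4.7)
The plan is to produce, for each admissible $x$ and $r$, an interior Corkscrew point $X_{\Delta_\star}$ of $\Omega_\star:=\Omega_{\F,Q_0}$. I write $\delta(x)=\dist(x,E)$ and fix a small constant $c_1\in(0,1)$ and a large constant $M\gg 1$, both to be chosen in terms of $n$, $m_0$, $C_0$, $C_1$, $\tau$ and the ADR constant of $E$. The argument splits into two regimes depending on the ratio $\delta(x)/r$.

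\emph{Case 1: $\delta(x)\ge c_1 r$.} Here $x\notin E$, so $x\in\Sigma:=\partial\Omega_\star\setminus E$. Since $\overline{\Omega_\star}=\bigcup_{I\in\W_{\F,Q_0}} I^*$ and $\Omega_\star$ is open, I pick $I\in\W_{\F,Q_0}$ with $x\in\partial I^*$; the Whitney property yields $\ell(I)\approx\delta(x)\gtrsim r$. Denoting the center of $I$ by $X_I$, I would set
\[
X_{\Delta_\star}:=x+\lambda r\,\frac{X_I-x}{|X_I-x|}
\]
for a small $\lambda=\lambda(n)>0$. The crucial geometric observation is the following: writing $I^*=X_I+[-s,s]^{n+1}$ with $s=(1+\tau)\ell(I)/2$, the condition $x\in\partial I^*$ reads $\|x-X_I\|_\infty=s$, hence along the segment from $x$ toward $X_I$ a direct computation with $e:=(X_I-x)/|X_I-x|$ gives
\[
\dist\bigl(x+te,\partial I^*\bigr)\,=\,s\cdot\frac{t}{|X_I-x|}\,\ge\,\frac{t}{\sqrt{n+1}},\qquad 0\le t\le|X_I-x|.
\]
Taking $t=\lambda r$ (which is admissible since $|X_I-x|\gtrsim\ell(I)\gtrsim r$) and setting $c:=\lambda/\sqrt{n+1}$ with $\lambda$ small enough that $\lambda(1+1/\sqrt{n+1})\le 1$, one obtains $B(X_{\Delta_\star},cr)\subset\interior(I^*)\cap B(x,r)$. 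Since $I\in\W_{\F,Q_0}$ forces $\interior(I^*)\subset\Omega_\star$, this delivers the desired Corkscrew point.

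\emph{Case 2: $\delta(x)<c_1 r$.} Here I cannot rely on a single fattened Whitney cube; instead I locate a nearby dyadic cube $Q\in\dd_{\F,Q_0}$ of length comparable to $r/M$, and use the center of any $I_0\in\W_Q$ as the Corkscrew. To find $Q$, pick a sequence $X^k\in\Omega_\star$ with $X^k\to x$ and, for each $k$, select $I_k\in\W_{Q^k}$ with $Q^k\in\dd_{\F,Q_0}$ and $X^k\in\interior(I_k^*)$. The Whitney relations give $\ell(Q^k)\approx\ell(I_k)\approx\delta(X^k)\to\delta(x)<c_1 r$ and $\dist(Q^k,x)\lesssim\ell(Q^k)+|X^k-x|$, so for $c_1$ small and $k$ large both $\ell(Q^k)<r/M$ and $\dist(Q^k,x)<r/(4M)$. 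I then pass to a dyadic ancestor $Q\supseteq Q^k$ with $\ell(Q)\in[r/M,2r/M]$: the standing hypothesis $r\lesssim\ell(Q_0)$ and $M$ large guarantee $\ell(Q)\le\ell(Q_0)$, so by dyadic nesting $Q\subseteq Q_0$, while $Q\in\dd_\F$ follows because $Q\subsetneq Q_j\in\F$ would force $Q^k\subsetneq Q_j$, contradicting $Q^k\in\dd_\F$. Thus $Q\in\dd_{\F,Q_0}$.

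Now pick any $I_0\in\W_Q$ (non-empty by Remark \ref{remark:E-cks}) and let $X_{\Delta_\star}:=X_{I_0}$. Interior containment is automatic: $I_0\in\W_Q\subset\W_{\F,Q_0}$ forces $\interior(I_0^*)\subset\Omega_\star$, and $\dist(X_{I_0},\partial\Omega_\star)\ge(1+\tau)\ell(I_0)/2\gtrsim r/M$. For closeness to $x$, using $\ell(I_0)\le 2^{m_0}\ell(Q)$, $\dist(I_0,Q)\le C_0\ell(Q)$, $\diam(Q)\le C_1\ell(Q)$ and $\dist(Q,x)\le\dist(Q^k,x)\lesssim c_1 r$, I obtain
\[
|X_{I_0}-x|\,\le\,\frac{M_3}{M}\,r\,+\,C_2\,c_1\,r,
\]
which, for $M$ large and $c_1$ small, is at most $r/2$. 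Choosing $c:=\min\bigl\{1/2,\,(1+\tau)2^{-m_0-1}/M\bigr\}$ yields $B(X_{\Delta_\star},cr)\subset\Omega_\star\cap B(x,r)$.

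\emph{Main obstacle.} The trickiest step is the geometric estimate underpinning Case 1: when $x$ sits near a vertex of $I^*$, motion toward $X_I$ only separates from $\partial I^*$ at rate $1/\sqrt{n+1}$, and this dimensional factor must be absorbed into $c$. Once this inequality is in hand, both cases reduce to bookkeeping with the Whitney parameters $m_0,C_0,\tau$ and the ADR constant of $E$.
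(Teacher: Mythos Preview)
Your argument is correct and follows the same strategy as the paper: your dichotomy $\delta(x)\gtrless c_1r$ simply repackages the paper's split (first $x\in E$ versus $x\notin E$, then $r\lesssim\ell(I)$ versus $\ell(I)\ll r$), and in the ``close to $E$'' regime both proofs pass to a dyadic ancestor $Q\in\dd_{\F,Q_0}$ of scale $\approx r/M$ and take the center of some $I_0\in\W_Q$ as the Corkscrew point, while in the ``far from $E$'' regime both use a point of $\interior(I^*)$ directly (the paper dispatches this in one line; your explicit segment computation is a nice elaboration). Two cosmetic slips do not affect the logic: in Case~1 your $\lambda$ must depend on $c_1$ (since only $|X_I-x|\gtrsim c_1 r$ is available), not just on $n$, and in Case~2 the hypothetical containment should read $Q\subseteq Q_j$ rather than $Q\subsetneq Q_j$.
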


This result says that the open set $\Omega_{\F, Q_0}$ satisfies the (interior) corkscrew condition.

\begin{proof}
We fix $Q_0\in \dd$, and a pairwise disjoint family $\{Q_j\}=\F\subset \dd_{Q_0}$.  Set
$$\td:=\td(x,r):=B(x,r)\cap\partial\Omega_{\F,Q_0},$$
with $r\lesssim \ell(Q_0)$ and $x\in \partial\Omega_{\F,Q_0}$.

We suppose first that $x\in \partial\Omega_{\F,Q_0}\cap E$.
Let $M\ge 1$ large enough to be chosen. Following the proof of Proposition \ref{prop:sawtooth-contain} we can find $k\ge 1$ such that $\dist(Q^{k},x)+\ell(Q^{k})<r/M^2$ with $Q^{k}\in\dd_{\F,Q_0}$. In particular, we can pick $x'\in Q^k$ such that $|x-x'|<r/M^2$. We now take an ancestor of $Q^k$, we call it $Q$, with the property that $\ell(Q)\approx r/M<\ell(Q_0)$. Clearly $Q^{k}\in\dd_{\F,Q_0}$ implies that $Q\in\dd_{\F,Q_0}$. Let us pick $I_Q\in \W_Q$  (since $\W_Q$ is not empty) and write $X(I_Q)$ for the center of $I_Q$.

Set $X_{\Delta_\star}=X(I_Q)$ and we shall see that $B(X_{\Delta_\star}, r/M^2)\subset B(x,r)\cap\Omega_{\F,Q_0}$ provided $M$ is large enough. First of all, by construction $I_Q\subset \Omega_{\F,Q_0}$ and therefore $B(X_{\Delta_\star}, r/M^2)\subset \Omega_{\F,Q_0}$ since $r/M^2\approx \ell(Q)/M\le 2^{m_0}\,\ell(I_Q)/M<\ell(I_Q)/4$ if $M$ is large. On the other hand for every $Y\in B(X_{\Delta_\star}, r/M^2)$ we have
\begin{multline*}
|Y-x|
\lesssim
|Y-X_{\Delta_\star}|+\ell(I_Q)+\dist(I_Q,Q)+\ell(Q)+|x'-x|
\\
\lesssim
\frac{r}{M^2}+(2^{m_0}+C_0)\,\ell(Q)
\lesssim
\frac{r}{M^2}+\frac{(2^{m_0}+C_0)\,r}{M}
<r,
\end{multline*}
provided $M$ is taken large enough depending on dimension, ADR, $m_0$ and $C_0$. This completes the proof of the case $x\in \partial\Omega_{\F,Q_0}\cap E$.

Next, we suppose that $x\in \partial\Omega_{\F,Q_0}\setminus E,$
where as above $\td:=\td(x,r).$  Then by definition of the sawtooth region,
$x$ lies on a face of a fattened Whitney cube $I^*=(1+\tau)I$, with
$I\in \W_Q$, for some
$Q\in \dd_{\F,Q_0}.$  If $r\lesssim \ell(I)$, then trivially there
is a point $X^{\!\star}\in I^*$ such that $B(X^{\!\star},cr)\subset
B(x,r)\cap\interior(I^*)\subset B(x,r)\cap\Omega_{\F,Q_0}$.  This $X^{\!\star}$ is then a Corkscrew point
for $\td$.  On the other hand, if $\ell(I)<r/M,$ with $M$ sufficiently large to be chosen momentarily, then there is a $Q'\in \dd_{\F,Q_0}$,
with $\ell(Q')\approx r/M,$ and $Q\subseteq Q'$.  Now fix $I_{Q'}\in \W_{Q'}$ and set  $X_{\Delta_\star}=X(I_{Q'})$. We see that $B(X_{\Delta_\star}, r/M^2)\subset B(x,r)\cap\Omega_{\F,Q_0}$ provided $M$ is large enough.
By construction $I_{Q'}\subset \Omega_{\F,Q_0}$ and therefore $B(X_{\Delta_\star}, r/M^2)\subset \Omega_{\F,Q_0}$ since $r/M^2\approx \ell(Q')/M\le 2^{m_0}\,\ell(I_{Q'})/M<\ell(I_Q)/4$ provided $M$ is large. On the other hand for every $Y\in B(X_{\Delta_\star}, r/M^2)$ we have
\begin{multline*}
|Y-x|
\lesssim
|Y-X_{\Delta_\star}|
+
\ell(I_{Q'})+\dist(I_{Q'},Q')+\ell(Q')+\ell(Q)+\dist(Q,I)+\ell(I)
\\
\lesssim
\frac{r}{M^2}+(2^{m_0}+C_0)\,(\ell(Q')+\ell(I))
\lesssim
\frac{r}{M^2}+\frac{(2^{m_0}+C_0)\,r}{M}
<r,
\end{multline*}
if we take  $M$ large enough depending on dimension, ADR, $m_0$ and $C_0$.
\end{proof}

\begin{proof}[Proof of Proposition \ref{prop:Sawtooths-ADR}: Lower ADR bound]
We are now ready to establish that for every $x\in\pom_{\star}$ and $0<r\lesssim\ell(Q_0)$ we have that
\begin{equation}\label{eqn:lower-ADR-sawtooh}
H^n\big(B(x,r)\cap \pom_\star\big)\gtrsim r^n
\end{equation}
where the implicit constant only depends on dimension, the ADR constant of $E$ and the parameters $m_0$ and $C_0$.

Write $B:=B(x,r)$ and $\td=\td(x,r):=B\cap\pom_{\star}$.  We consider two main cases.  As usual,
$M$ denotes a sufficiently large number to be chosen.

\noindent{\bf Case 1}:  $\delta(x)\geq r/(M\,C_0)$.
In this case, for some $J$ with
$\interior(J^*)\subset \Omega_{\star}$, we have that
$x$ lies on a subset $F$ of a (closed)
face of $J^*$, satisfying $H^n(F)\gtrsim (r/(M\,C_0))^n$,  and $F\subset \pom_{\star}$.
Thus, $H^n(B\cap \pom_{\star})\geq H^n(B\cap F)\gtrsim (r/(M\,C_0))^n,$ as desired.

\noindent{\bf Case 2}:  $\delta(x)< r/(M\,C_0)$.   In this case, we have that
$\dist(x,Q_0) \lesssim r/M.$
Indeed, if $x\in E\cap\pom_{\star}$, then by Proposition
\ref{prop:sawtooth-contain}, $x\in \overline{Q_0}$, so that $\dist(x,Q_0) =0$.
Otherwise, there is some cube
$Q\in \dd_{\F,Q_0}$ such that $x$ lies on the face
of a fattened Whitney cube $I^*$, with $I\in \W_{Q}^*$, and
$\ell(Q)\approx\ell(I)\approx \delta(x)<r/(M\,C_0)$.   Thus,
$$
\dist(x,Q_0)\lesssim\dist(I,Q)\le C_0\,\ell(Q)\lesssim r/M.
$$
Consequently,  we may choose $\hat{x}\in Q_0$
such that $|x-\hat{x}|\lesssim r/M$.
Fix now $\widehat{Q}\in \dd_{Q_0}$
with $\hat{x}\in \widehat{Q}$ and $\ell(\widehat{Q})\approx r/M$.  Then
for $M$ chosen large enough we have that
$\widehat{Q}\subset B(\hat{x},r/\sqrt{M})
\subset B(x,r)$.  We now consider two sub-cases.

\noindent{\it Sub-case 2a}: $B(\hat{x},r/\sqrt{M})$  meets a $Q_j\in\F$ with
$\ell(Q_j)\geq r/M$.  Then in particular, there is a $Q\subseteq Q_j$,
with $\ell(Q)\approx r/M$, and $Q\subset B(\hat{x},2r/\sqrt{M})$.  By Lemma
\ref{lemma4.9}, there is a ball $B'\subset \ree\setminus \overline{\Omega_{\star}}$,
with radius $r' \approx \ell(Q)/C_0\approx r/(C_0\,M)$, such that $B'\cap E\subset Q$, and thus also
$B'\subset B$ (for $M$ large enough).
On the other hand, we can apply Lemma \ref{lemma:Cks-Sawtooth} to find $B''=B(X_{\Delta_{\star}},c\,r)\subset B(x,r)\cap\Omega_{\star}$.
Therefore, by the isoperimetric inequality and the structure theorem
for sets of locally finite perimeter (cf. \cite{EG}, pp. 190 and 205, resp.)
we have $H^n(\td)\gtrsim c_{C_0} r^n$ (note that $\pom_\star$  is of local finite perimeter since we have already shown the upper ADR property).

\noindent{\it Sub-case 2b}:  there is no $Q_j$ as in sub-case 2a.   Thus,
if $Q_j\in\F$ meets $B(\hat{x},r/\sqrt{M})$, then $\ell(Q_j)\leq r/M$.  Since $\hat{x}\in Q_0$,
there is a surface ball
\begin{equation}\label{Delta1}
\Delta_1:= \Delta(x_1,cr/\sqrt{M})\subset Q_0\cap B(\hat{x},r/\sqrt{M})\subset Q_0\cap B.
\end{equation}
Let $\F_1$ denote the collection of those
$Q_j\in\F$ which meet $\Delta_1$.  We then have the covering
$$\Delta_1\subset \left(\cup_{\F_1} Q_j \right)\cup \left(\Delta_1\setminus(\cup_{\F_1} Q_j)\right).$$
If \begin{equation}\label{eqA.6*}
\sigma\left(\frac12 \Delta_1\setminus (\cup_{\F_1} Q_j)\right)
\geq\frac12 \sigma\left(\frac12 \Delta_1\right) \approx  r^n,\end{equation}
then we are done, since $\Delta_1\setminus (\cup_{\F_1} Q_j)\subset
(Q_0\setminus(\cup_{\F} Q_j))\cap B\subset \td,$
by Proposition \ref{prop:sawtooth-contain}.

Otherwise, if \eqref{eqA.6*} fails, then
\begin{equation}\label{eqA.7**}
\sum_{Q_j\in\F_1'}\sigma(Q_j) \gtrsim  r^n,
\end{equation}
where $\F_1'$ is the family of cubes $Q_j \in\F_1$ meeting
$\frac12 \Delta_1$.

We apply Lemma \ref{lemma4.9} with $Q=Q_j$ and there is a ball
$B_j=B(x_j,r_j)\subset\ree\setminus\overline{\Omega_{\F}}\subset\ree\setminus \overline{\Omega_\star}$ with $x_j\in E$ (indeed $x_j$ is the ``center'' of $Q_j$), $r_j\approx \ell(Q_j)/C_0$ and $B_j\cap E\subset Q_j$. Also, the dyadic parent $\widetilde{Q}_j$ of $Q_j$ belongs to $\dd_{\F, Q_0}$. Thus, we can find $I_j\in \W_{\widetilde{Q}_j}$ so that $I_j\subset\Omega_\star$. If we write $X(I_j)$ for the center of $I_j$ we have
$$
|x_j-X(I_j)|
\lesssim
\ell(\widetilde{Q}_j)+\dist(\widetilde{Q}_j, I_j)+\ell(I_j)
\lesssim
(2^{m_0}+C_0)\,\ell(Q_j).
$$
Note that $X(I_j) \in I_j\subset \Omega_\star$ and $x_j\in \ree\setminus\overline{\Omega_\star}$. Thus we can find $x_j^\star\in \pom_{\star}$ in the segment that joins $x_j$ and $X(I_j)$. We now consider $B_j^\star=B(x_j^\star, C\, (2^{m_0}+C_0)\,\ell(Q_j))$ which is a ball centered at $\pom_{\star}$. We first see that $
B_j\subset B_j^\star\setminus\overline{\Omega_{\star}}$.
We already know that $B_j \subset \ree\setminus \overline{\Omega_\star}$
and on the other hand if $y\in B_j$ we have
$$
|y-x_j^\star|
\le
|y-x_j|+|x_j-x_j^\star|
<
r_j+|x_j-X(I_j)|
\lesssim
(2^{m_0}+C_0)\,\ell(Q_j),
$$
and therefore $B_j\subset B_j^\star$. On the other hand, we can also show that $B(X(I_j),\ell(I_j)/4)\subset B_j^\star\cap \Omega_{\star}$. Indeed, $B(X(I_j),\ell(I_j)/4)\subset I_j\subset \Omega_{\star}$ and for every $y\in B(X(I_j),\ell(I_j)/4)$ we have
$$
|y-x_j^\star|
\le
|y-X(I_j)|+|X(I_j)-x_j^\star|
<
\ell(I_j) +|X(I_j)-x_j|
\lesssim
(2^{m_0}+C_0)\,\ell(Q_j)
$$
which yields that $B(X(I_j),\ell(I_j)/4)\subset B_j^\star$. Therefore, by the isoperimetric inequality and the structure theorem
for sets of locally finite perimeter (cf. \cite{EG}, pp. 190 and 205, resp.)
we have
\begin{equation}\label{eqn:est-Bjstar}
H^n(B_j^\star\cap\pom_{\star} )
\gtrsim
c_{C_0, m_0} \ell(Q_j)^n
\approx \sigma(Q_j).
\end{equation}
(note that $\pom_\star$  is of local finite perimeter since we have already shown the upper ADR property).

On the other hand, if we write $\hat{B}_{Q_j}=B(x_{Q_j}, C_1\,\ell(Q_j))$   such that $Q_j\subset \hat{B}_{Q_j}\cap E$ (see \eqref{cube-ball}) we can find $N=N(m_0,C_0)$ such that $B_j^\star\subset N\,\hat{B}_{Q_j}$. Indeed if $Y\in B_j^\star$ we have
\begin{multline}\label{dist:Y-xQj}
|Y-x_{Q_j}|
\le
|Y-x_j^\star|+|x_j^\star-x_j|
\le
C\,(2^{m_0}+C_0)\ell(Q_j)
+
|X(I_j)-x_j|
\\
\le
C'\,(2^{m_0}+C_0)\ell(Q_j)
<N\,C_1\,\ell(Q_j),
\end{multline}
where we have used that $x_j=x_{Q_j}$.

From \eqref{eqA.7**} it follows that we can find a finite family $\F_2\subset\F_1'$ such that
\begin{equation}\label{eqA.7***}
\sum_{Q_j\in\F_2}\sigma(Q_j)
\ge
\frac12
\sum_{Q_j\in\F_1'}\sigma(Q_j)
\gtrsim  r^n.
\end{equation}
From $\F_2$, following a typical covering argument, we can now take a
subcollection $\F_3$ so that the family $\{N\, \hat{B}_{Q_j}\}_{Q_j\in\F_3}$ is disjoint and also satisfies that if $Q_j\in\F_2\setminus \F_3$ then there exists $Q_k\in \F_3$ such that $r(\hat{B}_{Q_k})\ge r(\hat{B}_{Q_j})$  and $N\,\hat{B}_{Q_j}$ meets $N\, \hat{B}_{Q_k}$. Then it is trivial to see that
$$
\bigcup_{Q_j\in\F_2} Q_j
\subset
\bigcup_{Q_j\in\F_2} \hat{B}_{Q_j}
\subset
\bigcup_{Q_j\in\F_3} (2\,N+1)\hat{B}_{Q_j}
$$
Notice that the fact that the family $\{N\, \hat{B}_{Q_j}\}_{Q_j\in\F_3}$ is comprised of pairwise disjoint balls yields that the balls $\{B_j^\star\}_{Q_j\in\F_3}$ are also pairwise disjoint. Thus the previous considerations and \eqref{eqn:est-Bjstar} give
\begin{multline*}
H^n
\Big(
\bigcup_{Q_j\in\F_3} B_j^\star\cap\pom_{\star}
\Big)
=
\sum_{Q_j\in\F_3} H^n(B_j^\star\cap\pom_{\star})
\gtrsim
\sum_{Q_j\in\F_3} \sigma(Q_j)
\\
\gtrsim
\sigma
\Big(
\bigcup_{Q_j\in\F_3} (2\,N+1)\,\hat{B}_{Q_j}\cap E
\Big)
\ge
\sigma
\Big(
\bigcup_{Q_j\in\F_2} Q_j
\Big)
=
\sum_{Q_j\in\F_2} \sigma(Q_j)
\gtrsim r^n.
\end{multline*}

To complete the proof given $Q_j\in\F_3\subset\F_1'\subset\F$ we have that $Q_j$ meets $\frac12 \Delta_1$ and we can pick $z_j$ belonging to both sets. Notice that by \eqref{Delta1} in the present subcase we must have $\ell(Q_j)\le r/M$. This, \eqref{Delta1} and \eqref{dist:Y-xQj} imply that for every  $Y\in B_j^\star$ we have
\begin{multline*}
|Y-x|
\le
|Y-x_{Q_j}|+|x_{Q_j}-z_j|+|z_j-x_1|+|x_1-\hat{x}|+|\hat{x}-x|
\lesssim
\frac{r}{\sqrt{M}}+\frac{r}{M}
\lesssim
\frac{r}{\sqrt{M}}<r
\end{multline*}
provided $M$ is large enough, and therefore $B_j^\star\subset B$. This in turn gives as desired that
$$
H^n(B\cap\pom_{\star})
\ge
H^n
\Big(
\bigcup_{Q_j\in\F_3} B_j^\star\cap\pom_{\star}
\Big)
\gtrsim r^n.
$$
\end{proof}

\end{document}